\definecolor{newgreen}{rgb}{0.0, 0.5, 0.3}
\definecolor{newblue}{rgb}{0.0, 0.1, 0.7}
\numberwithin{equation}{section}
\newtheorem{rem}{Remark}[section]
\newtheorem{cor}{Corollary}[section]
\newtheorem{pro}{Proposition}[section]
\newtheorem{theo}{Theorem}[section]
\newcommand{\R}{\mathbb R}
\newcommand{\prob}{\mathbb{P}}
\newcommand{\esp}{\mathbb{E}}
\newcommand{\ra}{\rightarrow}
\newcommand{\e}{\varepsilon}
\newcommand{\bs}{\boldsymbol}
\begin{document}

\begin{center}
{\Large\sc Finite sample properties of the mean occupancy counts and probabilities}\vspace{0.2cm}\\
\end{center}
\begin{center}
{\bf G. Decrouez$^{\dagger}$, M. Grabchak$^{\star}$ and Q. Paris$^{\dagger,\sharp,}$\footnote{Corresponding author.}}\vspace{0.1cm}\\
{\it $^{\dagger}$National Research University, Higher School of Economics\footnote{The study has been funded by the Russian Academic Excellence Project 5-100.}\\ 
\& $^{\sharp}$Laboratory of Stochastic Analysis and its Applications\\ 
Moscow, Russian Federation\\
\& $^{\star}$Department of Mathematics and Statistics\\
University of North Carolina, Charlotte}\vspace{0.1cm}\\
\href{mailto:ggdecrouez@hse.ru}{ggdecrouez@hse.ru}, \href{mailto:mgrabcha@uncc.edu}{mgrabcha@uncc.edu} and 
\href{mailto:qparis@hse.ru}{qparis@hse.ru}
\end{center}
\vspace{0.2cm}
\begin{quote}{\small
\noindent{\bf Abstract }-- For a probability distribution $P$ on an at most countable alphabet $\mathcal A$, this article gives finite sample bounds for the expected occupancy counts $\esp K_{n,r}$ and probabilities $\esp M_{n,r}$. Both upper and lower bounds are given in terms of the counting function $\nu$ of $P$. Special attention is given to the case where $\nu$ is bounded by a regularly varying function. In this case, it is shown that our general results lead to an optimal-rate control of the expected occupancy counts and probabilities with explicit constants. Our results are also put in perspective with Turing's formula and recent concentration bounds to deduce bounds in probability. At the end of the paper, we discuss an extension of the occupancy problem to arbitrary distributions in a metric space.\vspace{0.2cm}

\noindent{\bf Index terms }-- Counting measure; Finite sample bounds; Occupancy problem; Regular variation; Turing's Formula; Urn scheme.}
\end{quote}

\date{\today}

%-----------------------------------------
%-----------------------------------------
\section{Introduction}
\label{intro}
{\it The occupancy problem}\vspace{0.3cm}\\
From a general point of view, the occupancy problem -- also referred to as the urn scheme -- is to  describe the spread of a random sample drawn from a probability distribution supported by an at most countable alphabet. In the literature, this task is usually carried out by studying the so-called occupancy counts and occupancy probabilities -- also known as rare probabilities -- defined below. Interest for the occupancy problem arises in many practical situations such as Ecology \citep{GT56, C81}, Genomics \citep{ML02}, Language Processing \citep{CG99}, Authorship Attribution \citep{ET76, TE87,ZH07}, Information Theory \citep{OSZ04} and Computer Science \citep{Z05}.\\ 

Consider an at most countable alphabet $\mathcal A$ with an associated probability distribution $P=\{p_a:a\in\mathcal A\}$, where $p_a\in[0,1]$ and $\sum_{a\in\mathcal A}p_a=1$. Let $\mathcal S=\{a\in\mathcal A:p_a>0\}$ denote the support of $P$, and let $X_1,\dots,X_n$ be independent and identically distributed $\mathcal A$-valued random variables, defined on some probability space $(\Omega, \mathcal F, \mathbb P)$, with distribution $P$. For all $a\in\mathcal A$, we set  
\begin{equation}
\label{xi}
\xi_{n}(a)=\sum_{i=1}^{n}\mathbf 1\{X_{i}=a\},
\end{equation}
where the notation $\mathbf 1\{...\}$ stands for the indicator function of the event $\{...\}$. For all integers $0\le r\le n$, the occupancy counts $K_{n,r}$ and occupancy probabilities $M_{n,r}$ are defined, respectively, by 
\begin{equation}
\label{defKM}
K_{n,r}=\sum_{a\in\mathcal A}\mathbf 1\{\xi_{n}(a)=r\}\quad\mbox{and}\quad M_{n,r}=\sum_{a\in\mathcal A}p_a\mathbf1\{\xi_{n}(a)=r\}.
\end{equation}
For any integer $0\le r\le n$, the random variable $K_{n,r}$ stands for the number of points in $\mathcal A$ represented exactly $r$ times in the sample. A clear interpretation of the occupancy probabilities is given by the following equivalent representation. Introducing a generic $\mathcal A$-valued random variable $X$, independent of the sample and distributed according to $P$, we have, almost surely,
$$M_{n,r}=\prob\left(\xi_n(X)=r\,\vert\,X_1,\dots,X_n\right).$$
Hence, for any integer $0\le r\le n$, $M_{n,r}$ stands for the (conditional) probability that, given the first $n$ observations, the next one will be of a letter that is already represented $r$ times in the sample. The quantity $M_{n,0}$ is particularly important. In the literature it is usually called the missing mass, and has attracted a lot of attention due to its practical interpretation as the probability of novelty.  The goal of this paper is to understand the finite sample properties of $\esp K_{n,r}$ and $\esp M_{n,r}$.\\

{\it Related work}\vspace{0.3cm}\\
Following the pioneering work of \citet{K67}, it is understood that the asymptotic behavior of the occupancy counts $K_{n,r}$ is strongly connected to the behavior of the tail of the counting measure $\bs\nu$ of $P$, which is defined on $[0,1]$ by
\begin{equation}\label{measurenu}
\bs\nu({\rm d}x)=\sum_{a\in\mathcal A}\delta_{p_a}({\rm d}x).
\end{equation}
The function $\nu:[0,1]\to\mathbb N$, defined  by
\begin{equation}
\label{nu}
\nu(\e)=\bs\nu([\e,1]),
\end{equation}
is usually referred to as the counting function of $P$. A short account of some of its basic properties is given in Appendix \ref{nuprop}. We now illustrate the relationship between the behavior of $\nu$ and that of $K_{n,r}$. Toward this end, we recall some terminology from \cite{K67}. We say that a function $f:[0,+\infty)\to\R$ is regularly varying at $x_0\in\{0,\infty\}$ with exponent $\alpha\in\R$, and we write $f\in{\rm rv}^{\alpha}_{x_0}$, if 
$$
\forall c>0, \quad\lim_{x{\to} x_0}\frac{f(cx)}{f(x)} = c^\alpha.
$$
If $\alpha=0$ we say that $f$ is slowly varying at $x_0$. 
Note that $f\in {\rm rv}^{\alpha}_{0}$ if and only if there exists $\ell\in{\rm rv}^{0}_{\infty}$ such that, for all $\e>0$,
\begin{equation}
\label{rvalpha}
f(\e)= \e^{-\alpha}\ell(1/\e).
\end{equation}
It is well-known that the counting function $\nu$, defined in \eqref{nu}, satisfies $\nu(\e)= \e^{-\alpha}\ell(1/\e)$, for some $\alpha\in(0,1)$ and some $\ell\in{\rm rv}^{0}_{\infty}$, if and only if 
\begin{equation}
\forall r\ge 1:\quad K_{n,r}\underset{a.s.}{\sim}\esp K_{n,r}\sim\frac{\alpha\Gamma(r-\alpha)}{r!}n^{\alpha}\ell(n),
\label{asympK}
\end{equation}
as $n\to+\infty$. (Here and throughout, for any two real-valued functions $g$ and $h$ and any $x_{0}\in[0,+\infty]$, we write $h(x)\sim g(x)$ as $x\to x_{0}$ if and only if $h(x)/g(x)\to 1$ as $x\to x_{0}$.) For a detailed exposition and developments on this topic, we refer the reader to the classic text by \citet{Johnson:Kotz:1977} or the more recent, and very complete, survey by \citet{GHP07}, which, in particular, studies extensions of \eqref{asympK} to the case $\alpha\in\{0,1\}$ under additional care. \\

In the same spirit, \citet{OD12} extended \eqref{asympK} to the case of occupancy probabilities proving that, if $\nu(\e)= \e^{-\alpha}\ell(1/\e)$ for some $\alpha\in(0,1)$ and some $\ell\in {\rm rv}^{0}_{\infty}$, then
\begin{equation}
\forall r\ge 0:\quad M_{n,r}\underset{a.s.}{\sim}\esp M_{n,r}\sim\frac{\alpha\Gamma(1+r-\alpha)}{r!} n^{\alpha-1}\ell(n),
\label{asympM}
\end{equation}
as $n\to+\infty$. While the second asymptotic equivalence in \eqref{asympM} is, as mentioned by the authors, easily derived from \eqref{asympK} and the relation
\begin{equation}
\label{KandM}
\esp M_{n,r}=\left(\frac{1+r}{1+n}\right)\esp K_{n+1,r+1},
\end{equation}
the first asymptotic equivalence in \eqref{asympM} is established by \citet{OD12} by proving more powerful concentration properties of $M_{n,r}$ around its expectation.\\

Some of the first concentration properties in this context were established by \citet{MS00} for the missing mass $M_{n,0}$. The concentration properties of the missing mass have since been investigated by \citet{MO03,OD10,BK13} and \citet{KH15}. Many extensions and new results concerning the concentration properties of the occupancy counts $K_{n,r}$ and occupancy probabilities $M_{n,r}$ can be found in \citet{OD12} and \citet{BBO15}.\\

Establishing concentration properties is a fundamental step toward understanding the finite sample behavior of the occupancy counts and probabilities. However, a full understanding of the finite sample properties of $K_{n,r}$ and $M_{n,r}$ requires finite sample bounds for their expectations. We are only aware of two contributions in this direction, namely \citet{OD10} and \citet{BK12}, which both focus on the missing mass. In particular, \citet{OD10} introduce the accrual function $F(x)=P(\{a: p_a\le x\})$, and show that
\begin{equation}
\label{od10}
\sup_{0\le\e\le1}\left\{(1-\e)^nF(\e)\right\}\le \esp M_{n,0}\le\inf_{0\le\e\le1}\left\{(1-\e)^n+F(\e)\right\}.
\end{equation}
It should be noted that, as described in Appendix \ref{appendixod10}, this result yields, in many cases, explicit bounds with almost optimal rates of convergence. In \citet{BK12}, the authors show that, in the finite support case,
$$\forall n\le \vert\mathcal S\vert:\ \esp M_{n,0}\le e^{-n/\vert\mathcal S\vert}\quad\mbox{and}\quad\forall n>\vert\mathcal S\vert:\ \esp M_{n,0}\le \frac{\vert\mathcal S\vert}{ne},$$
while in the infinite support case, there exists a universal constant $c>0$ such that
$$\esp M_{n,0}\le\frac{L(P)}{cn},\quad\mbox{where}\quad L(P)=\sup_{0<\e<1}\{\nu(\e/2)-\nu(\e)\}.$$
In addition, the authors prove that, for any integer $a>1$, there exists a distribution $P$ for which $L(P)=a$ and $\esp M_{n,0}\ge c'a/n$, where $c'>0$ denotes a universal constant. Hence, their bound is shown to be sharp for a certain class of probability distributions. Unfortunately, $L(P)=+\infty$ in many interesting cases, including when $\mathcal A=\{1,2,\dots\}$ and, for some $\alpha\in(0,1)$, the distribution $P$ has masses $p_k=Ck^{-1/\alpha},\ k \in\mathcal A$. \\

Concerning lower bounds, there are interesting results from a somewhat different perspective given in Lemma 4.1 of \cite{ABS2000} and Lemma 1 of \cite{Zhang2016}. These are discussed, in detail, in Appendix \ref{AppC}. \\

{\it Contribution and organisation of the paper}\vspace{0.3cm}\\
Building on the previous work from \citet{OD10} and \citet{BK12}, this paper establishes finite sample upper and lower bounds for the expected occupancy counts $\esp K_{n,r}$ and the expected occupancy probabilities $\esp M_{n,r}$ for arbitrary $n\ge 1$ and arbitrary $0\le r\le n$. For simplicity of exposition, focus is put on the expected occupancy probabilities $\esp M_{n,r}$ knowing that relation \eqref{KandM} immediately implies similar bounds for the expected occupancy counts. Section \ref{sg} is devoted to our main results. We first give general bounds in terms of the counting function $\nu$, which make no assumptions about the underlying distribution. Additional assumptions on $\nu$ are used to derive more explicit bounds. In particular, when the counting function is regularly varying, the bounds are shown to be consistent with \eqref{asympK} and \eqref{asympM}, and are thus rate optimal. Section \ref{apps} presents some applications and extensions.  Specifically,  Subsection \ref{sstf} discusses the relationship between our results and Turing's formula, while  Subsection \ref{sscr} shows how we can combine our results with recent concentration bounds to derive bounds in probability for $M_{n,r}$ and $K_{n,r}$. Further, in Subsection \ref{ssp}, we present an extension to the case of a random number of observations modelled by a non-homogeneous Poisson process, and in Subsection \ref{ssap} we  discuss an interesting perspective for future research in the context of arbitrary probability measures -- i.e.\ not necessarily discrete -- in a metric space. Proofs are postponed to Section \ref{proofs}. Finally, Appendix \ref{nuprop} collects a few basic properties of the counting function, Appendix \ref{appendixod10} investigates the performance of bounds given in terms of the accrual function, and Appendix \ref{AppC} discusses the lower bounds from \cite{ABS2000} and \cite{Zhang2016}.\\

{\it Notation}\vspace{0.3cm}\\
Throughout, the notation $\mathbf 1\{...\}$ stands for the indicator function of the event $\{...\}$. For any set $B$, we write $\vert B\vert$ to denote the number (possibly infinite) of elements in $B$. For any $t>0$ and any $x\ge 0$, we denote by
\begin{equation}
\label{lig}
\gamma(t,x)=\int_0^{x}u^{t-1}e^{-u}{\rm d}u
\end{equation}
the lower incomplete Gamma function. Note that the Gamma function is given by $\Gamma(t)=\gamma(t,+\infty)$. 

%-----------------------------------------
%-----------------------------------------

\section{Main results}
\label{sg}

 In this section we give upper and lower bounds for the expected occupancy probabilities $\esp M_{n,r}$. From \eqref{KandM} it follows that all of the results in this section can be immediately adapted to the expected occupancy counts $\esp K_{n,r}$. However, for ease of exposition, we only report results in terms of the occupancy probabilities.

\subsection{Upper bounds}

Let $P=\{p_a:a\in\mathcal A\}$ be a probability measure on the countable alphabet $\mathcal A$. Its counting function $\nu$ defined in \eqref{nu}, can be equivalently written as 
\begin{equation}
{\nu}(\e)=\vert\left\{a\in\mathcal A:p_a\ge\e\right\}\vert, \quad 0\le\e\le1.
\label{nu2}
\end{equation}
A short account of the basic properties of $\nu$ is given in Appendix \ref{nuprop}. Our first result provides a general upper bound in terms of $\nu$. In the sequel, we denote 
\begin{equation}
\label{cr}
c(r)=\bigg\{\begin{array}{ll}
e^{-1} &\mbox{ if }\ r=0,\\
e(1+r)/\sqrt{\pi} &\mbox{ if }\ r\ge 1.
\end{array}
\end{equation}

\begin{theo}
\label{tg}
For any $n\ge 1$ and any $0\le r\le n-1$, we have
\begin{equation}
\label{eq:main}
\esp M_{n,r}\le \inf_{0\le\e\le1}\left\{\varphi^{\,+}_{n,r}(\e)+\psi^{\,+}_{n,r}(\e)\right\},
\end{equation}
where
\begin{eqnarray}
\varphi^{\,+}_{n,r}(\e)&=&\frac{c(r)\nu(\e)}{n},
\nonumber\\
\psi^{\,+}_{n,r}(\e)&=&2^{1+r}\binom{n}{r}\int_{0}^{\e}\nu\left(\frac u2\right)u^r\left(1-\frac u2\right)^{n-r}{\rm d}u.
\nonumber
\end{eqnarray}
Further, for any $n\ge 1$,
\begin{equation}
\label{requaln}
\esp M_{n,n}\le \inf_{0\le\e\le1}\left\{p^{n+1}_{\star} \nu(\e)+\e^n\right\},
\end{equation}
where $p_{\star} = \max\{p_a:a\in\mathcal A\}\in(0,1]$. 
\end{theo}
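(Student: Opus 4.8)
The starting point is the exact identity
\[
\esp M_{n,r}=\binom{n}{r}\sum_{a\in\mathcal A}p_a^{\,r+1}(1-p_a)^{n-r},
\]
obtained from the definition \eqref{defKM} of $M_{n,r}$, Tonelli's theorem (all summands are nonnegative), and the fact that $\xi_n(a)$ is $\mathrm{Bin}(n,p_a)$, so $\prob(\xi_n(a)=r)=\binom{n}{r}p_a^r(1-p_a)^{n-r}$. Fixing $\e\in[0,1]$, I would split this sum according to whether $p_a\ge\e$ (``large masses'') or $p_a<\e$ (``small masses''), bound the large-mass block by $\varphi^{\,+}_{n,r}(\e)$ and the small-mass block by $\psi^{\,+}_{n,r}(\e)$, and conclude by taking the infimum over $\e$.

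For the large-mass block the crucial ingredient is the uniform estimate
\[
\sup_{0\le p\le 1}\binom{n}{r}p^{\,r+1}(1-p)^{n-r}\le\frac{c(r)}{n},
\]
with $c(r)$ as in \eqref{cr}. Writing the left-hand side as $p\,\prob(\mathrm{Bin}(n,p)=r)$ and differentiating its logarithm shows it is maximised at $p^{\star}=(r+1)/(n+1)$; evaluating there, one finishes either by the trivial bound $\prob(\mathrm{Bin}(n,p^{\star})=r)\le 1$ — which already gives $\le(r+1)/(n+1)\le c(r)/n$ when $r\ge 1$ — or, in the case $r=0$, by the elementary inequality $(1-\tfrac1{n+1})^{n+1}\le e^{-1}$, which delivers the sharp constant $c(0)=e^{-1}$. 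Since the large-mass block contains exactly $\nu(\e)=|\{a:p_a\ge\e\}|$ terms, each at most $c(r)/n$, it is at most $c(r)\nu(\e)/n=\varphi^{\,+}_{n,r}(\e)$.

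For the small-mass block I would cover each atom by a dyadic interval. If $p_a<\e$ and $x\in[p_a/2,p_a]$ (so $x<\e$), then $x\ge p_a/2$ gives $x^r\ge(p_a/2)^r$ and hence $p_a^{\,r+1}\le 2^{1+r}\int_{p_a/2}^{p_a}x^r\,{\rm d}x$, while $x\le p_a$ gives $(1-p_a)^{n-r}\le(1-x)^{n-r}$; combining,
\[
p_a^{\,r+1}(1-p_a)^{n-r}\le 2^{1+r}\int_{p_a/2}^{p_a}x^r(1-x)^{n-r}\,{\rm d}x.
\]
Summing over $\{a:p_a<\e\}$ and exchanging sum and integral by Tonelli, the small-mass block is at most $2^{1+r}\binom{n}{r}\int_0^\e x^r(1-x)^{n-r}N(x)\,{\rm d}x$, where $N(x)=|\{a:\,p_a<\e,\ x\le p_a\le 2x\}|\le|\{a:p_a\ge x\}|=\nu(x)$. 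Using that $\nu$ and $t\mapsto(1-t)^{n-r}$ are nonincreasing, so $\nu(x)\le\nu(x/2)$ and $(1-x)^{n-r}\le(1-x/2)^{n-r}$, this is bounded by $\psi^{\,+}_{n,r}(\e)$; adding the two blocks and taking $\inf_\e$ yields \eqref{eq:main}. The case $r=n$ is handled directly: here $\esp M_{n,n}=\sum_{a}p_a^{\,n+1}$, and splitting at $\e$ the large part is at most $p_{\star}^{\,n+1}\nu(\e)$ (using $p_a\le p_{\star}$ and $|\{a:p_a\ge\e\}|=\nu(\e)$) while the small part is at most $\e^n$ (using $p_a^{\,n}\le\e^n$ and $\sum_a p_a=1$), which is \eqref{requaln}.

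The step I expect to be most delicate is the small-mass estimate: one must choose the covering intervals so that they stay inside $[0,\e]$ (this is why $[p_a/2,p_a]$, rather than $[p_a,2p_a]$, is used), reproduce exactly the constant $2^{1+r}$ from the comparison $p_a^{\,r+1}\le 2^{1+r}\int_{p_a/2}^{p_a}x^r\,{\rm d}x$, and carefully justify the Tonelli exchange together with the counting bound $N(x)\le\nu(x)$. By contrast, the large-mass bound is a short calculus exercise and the $r=n$ case is immediate.
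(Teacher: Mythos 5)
Your proposal is correct; it uses the paper's decomposition (split the exact identity $\esp M_{n,r}=\binom{n}{r}\sum_a p_a^{r+1}(1-p_a)^{n-r}$ at the threshold $\e$, bound each block, take the infimum), but both key estimates are established by genuinely different and more elementary arguments. For the large-mass block the paper computes $\sup_u u^{r+1}(1-u)^{n-r}=(1+r)^{1+r}(n-r)^{n-r}/(1+n)^{1+n}$ and then invokes Robbins-type Stirling bounds to extract the constant $e(1+r)/\sqrt{\pi}$; your observation that the summand equals $p\,\prob(\mathrm{Bin}(n,p)=r)\le p^{\star}=(r+1)/(n+1)$ bypasses Stirling entirely and in fact yields the better constant $1+r<e(1+r)/\sqrt{\pi}$ for $r\ge1$ (your $r=0$ computation coincides with the paper's). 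For the small-mass block the paper partitions $\{a:p_a<\e\}$ into geometric shells $[\e b^{-j-1},\e b^{-j})$, counts each shell by differences of $\nu$, and converts the resulting series into an integral using monotonicity of $u\mapsto u^r$ and $u\mapsto\nu(u)(1-u)^{n-r}$; your per-atom comparison $p_a^{r+1}(1-p_a)^{n-r}\le 2^{1+r}\int_{p_a/2}^{p_a}x^r(1-x)^{n-r}\,{\rm d}x$ followed by Tonelli and the counting bound $N(x)\le\nu(x)$ is cleaner, avoids the shell bookkeeping, and actually produces the tighter integrand $\nu(u)u^r(1-u)^{n-r}$, which you then correctly relax (via monotonicity of $\nu$ and of $t\mapsto(1-t)^{n-r}$) to match the stated $\psi^{\,+}_{n,r}(\e)$; the one thing your route does not reproduce is the paper's remark that the factor $2^{1+r}$ can be replaced by $b^{1+r}/(b-1)$ for arbitrary $b>1$, though an analogous generalization of your interval $[p_a/b,p_a]$ would recover it. Your treatment of the $r=n$ case is identical to the paper's. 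All steps check out: $2^{1+r}\ge r+2$ is not even needed since your lower bound $\int_{p_a/2}^{p_a}x^r\,{\rm d}x\ge(p_a/2)^{r+1}$ is direct, and the Tonelli exchange is legitimate because all integrands are nonnegative.
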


\begin{rem}
The proof of Theorem \ref{tg} consists in studying separately, and for all $\e\in[0,1]$, the contributions of large (i.e. larger then $\e$) and small (i.e. smaller then $\e$) probabilities. These contributions are bounded, respectively, by $\varphi^{\,+}_{n,r}(\e)$ and $\psi^{\,+}_{n,r}(\e)$. Details in the proof reveal that the term $\psi^{\,+}_{n,r}(\e)$ can in fact be replaced by the quantity 
$$
\frac{b^{1+r}}{b-1}\binom{n}{r}\int_{0}^{\e}\nu\left( \frac{u}{b}\right)u^r\left(1-\frac{u}{b}\right)^{n-r}{\rm d}u,
$$
for any $b>1$. In principle, the value of $b$ may be optimized, but for the sake of simplicity, we choose $b=2$. Note that, since $\nu$ is bounded on intervals away from $0$, this should not affect the bound in a substantial way.
\end{rem}

Observe that, in \eqref{eq:main}, the two terms $\varphi^{\,+}_{n,r}(\e)$ and $\psi^{\,+}_{n,r}(\e)$ have opposite monotonic behaviours in $\e$. In full generality, the value of $\e$ leading to the optimal tradeoff is not obvious. However, in many interesting cases, a relevant choice of $\e$ yields explicit and, as far as we know, new bounds.

\begin{cor}
\label{ctgfinite}
Suppose that $\mathcal S$ is finite. Then, for all $n\ge 1$ and all $0\le r\le n-1$,
$$\esp M_{n,r}\le \frac{c(r)\vert\mathcal S\vert}{n}\qquad\mbox{and}\qquad \esp M_{n,n}\le p^{n+1}_{\star}\vert\mathcal S\vert,
$$
where $c(r)$ is as in \eqref{cr}.
\end{cor}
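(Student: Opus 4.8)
The plan is to simply specialize Theorem \ref{tg} to the finite-support case and choose the trivial cutoff $\e = 0$ (or, more precisely, let $\e \downarrow 0$). First I would observe that when $\mathcal S$ is finite, the counting function $\nu$ is uniformly bounded: indeed, $\nu(\e) = |\{a : p_a \ge \e\}| \le |\mathcal S|$ for all $\e \in [0,1]$, with equality once $\e$ drops below $p_{\min} := \min\{p_a : a \in \mathcal S\} > 0$. This boundedness is exactly what makes the "large probabilities" term $\varphi^{\,+}_{n,r}(\e) = c(r)\nu(\e)/n$ controllable by $c(r)|\mathcal S|/n$, uniformly in $\e$.

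For the first bound, I would take the infimum in \eqref{eq:main} over $\e$ and note that as $\e \to 0^+$, the "small probabilities" term $\psi^{\,+}_{n,r}(\e)$ vanishes, since it is the integral over $[0,\e]$ of a bounded integrand (the factor $\nu(u/2) \le |\mathcal S|$ is bounded, and $u^r(1-u/2)^{n-r}$ is continuous on $[0,1]$), so $\psi^{\,+}_{n,r}(\e) \le 2^{1+r}\binom{n}{r}|\mathcal S|\int_0^\e u^r\,{\rm d}u \to 0$. Hence
\[
\esp M_{n,r} \le \inf_{0\le\e\le1}\left\{\frac{c(r)\nu(\e)}{n} + \psi^{\,+}_{n,r}(\e)\right\} \le \liminf_{\e\to 0^+}\left\{\frac{c(r)\nu(\e)}{n} + \psi^{\,+}_{n,r}(\e)\right\} \le \frac{c(r)|\mathcal S|}{n}.
\]
For the case $r = n$, I would apply \eqref{requaln} in the same way: as $\e \to 0^+$, the term $\e^n \to 0$ while $p_\star^{n+1}\nu(\e) \le p_\star^{n+1}|\mathcal S|$, giving $\esp M_{n,n} \le p_\star^{n+1}|\mathcal S|$.

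There is essentially no obstacle here — this is a direct corollary — but the one point requiring a word of care is the passage to the limit $\e \to 0^+$ inside the infimum, i.e.\ confirming that $\psi^{\,+}_{n,r}(\e) \to 0$; this follows immediately from the boundedness of $\nu$ on $\mathcal S$ finite and the fact that the remaining factor in the integrand is bounded on $[0,1]$. (Alternatively, since $\nu$ is integer-valued and nondecreasing as $\e$ decreases, one may simply take $\e$ strictly below $p_{\min}$, at which point $\nu(\e) = |\mathcal S|$ is constant and the bound is attained in the limit without any continuity argument.)
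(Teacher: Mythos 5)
Your proposal is correct and is essentially the paper's own (omitted) proof: specialize Theorem \ref{tg} by taking $\e=0$ (equivalently, any $\e$ below $\min_{a\in\mathcal S}p_a$, or the limit $\e\to0^{+}$), so that $\psi^{\,+}_{n,r}(\e)$ vanishes and $\varphi^{\,+}_{n,r}(\e)$ reduces to $c(r)\vert\mathcal S\vert/n$, with the same specialization of \eqref{requaln} for $r=n$. Your extra care in taking $\e\to0^{+}$ rather than literally $\e=0$ is a harmless (and slightly cleaner) refinement, since it sidesteps the fact that $\nu(0)$ counts letters of zero mass.
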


The proof of Corollary \ref{ctgfinite} simply involves taking $\e=0$ in Theorem \ref{tg} and is therefore omitted. Note that, when we take $r=0$ in Corollary \ref{ctgfinite}, we recover the bound $\esp M_{n,0}\le\vert\mathcal S\vert/(ne)$ for the expected missing mass provided by \citet{BK12}. Next, we study several situations, where $P$ has an infinite support.

\begin{cor}
\label{ctg}
Suppose that $\mathcal S$ is infinite. Assume that, for $\alpha\in[0,1]$ and $\ell\in {\rm rv}^0_{\infty}$, we have $\nu(\e)\le \e^{-\alpha}\ell(1/\e)$ for all $0<\e\le1$. Suppose, in addition, that $\ell$ is non-increasing. Then, for all $n\ge 2$ and all $0\le r\le n-1$, we have  
$$\esp M_{n,r}\le c_{1}(\alpha,r)\,n^{\alpha-1}\ell(n),$$
where 
$$c_{1}(\alpha,r)= c(r)+\frac{4^{1+r}}{r!}(1+r)^{1+r-\alpha}\gamma(1+r-\alpha,\tfrac 12),$$
$c(r)$ is as in \eqref{cr}, and $\gamma(\cdot,\cdot)$ denotes the lower incomplete Gamma function defined in \eqref{lig}.
\end{cor}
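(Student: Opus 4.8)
The plan is to apply Theorem \ref{tg} with a well-chosen cutoff $\e$ and then bound each of the two resulting terms using the regular-variation hypothesis $\nu(\e)\le\e^{-\alpha}\ell(1/\e)$ together with the monotonicity of $\ell$. The natural choice, motivated by \eqref{asympK}--\eqref{asympM} and by the structure of $\varphi^{\,+}_{n,r}$ and $\psi^{\,+}_{n,r}$, is $\e=(1+r)/n$ (so that the Binomial weight $u^r(1-u/2)^{n-r}$ is near its mode on $[0,\e]$); since $n\ge 2$ and $0\le r\le n-1$ this lies in $(0,1]$. First I would treat $\varphi^{\,+}_{n,r}(\e)=c(r)\nu(\e)/n$: by hypothesis this is at most $c(r)\e^{-\alpha}\ell(1/\e)/n=c(r)(1+r)^{-\alpha}n^{\alpha-1}\ell(n/(1+r))$, and since $\ell$ is non-increasing and $1+r\ge 1$ we have $\ell(n/(1+r))\le\ell(n)$, so this term is bounded by $c(r)\,n^{\alpha-1}\ell(n)$ (absorbing the harmless $(1+r)^{-\alpha}\le 1$ factor). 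This contributes the $c(r)$ summand in $c_1(\alpha,r)$.

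Next I would handle $\psi^{\,+}_{n,r}(\e)=2^{1+r}\binom{n}{r}\int_0^\e\nu(u/2)u^r(1-u/2)^{n-r}\,\mathrm du$. On the range $u\in[0,\e]$ the bound $\nu(u/2)\le(u/2)^{-\alpha}\ell(2/u)$ applies, and since $u\le\e\le 1$ and $\ell$ is non-increasing, $\ell(2/u)\le\ell(2/\e)=\ell(2n/(1+r))\le\ell(n)$ (using $1+r\le n$, so $2n/(1+r)\ge 2>1$... more carefully $2n/(1+r)\ge n$ iff $1+r\le 2$, so instead I bound $\ell(2/u)\le\ell(2)\le\ell(1)$ only if that helps — better: since $u\le\e$, $2/u\ge 2/\e = 2n/(1+r)$, and as $1+r\le n$ gives $2n/(1+r)\ge 2$; I want this $\ge$ something comparable to $n$, which fails for large $r$, so I instead keep $\ell(2/u)$ and extend the integral, pulling $\ell$ out only after substitution). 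Cleaner: substitute $u=2v$, giving $\psi^{\,+}_{n,r}(\e)=2^{1+r}\binom{n}{r}2^{1+r}\int_0^{\e/2}\nu(v)(2v)^r(1-v)^{n-r}\,\mathrm dv$; bound $\nu(v)\le v^{-\alpha}\ell(1/v)$ and, crucially, use $\ell(1/v)\le\ell(n)\cdot(\text{something})$ only on the part $v\le 1/n$, while for $1/n\le v\le\e/2$ one writes $\ell(1/v)\le\ell(1/v)$ and exploits that $v\ge 1/n$ so $1/v\le n$ hence $\ell(1/v)\ge\ell(n)$ — wrong direction. The honest route is: the substitution $u=2(1+r)t/n$ turns the integral into $n^{\alpha-1}\ell$-scale times $\int_0^{1/(1+r)}(\cdots)t^{r-\alpha}e^{-(\text{stuff})}\,\mathrm dt$, and after bounding $(1-x)^{n-r}\le e^{-(n-r)x}\le$ (a constant times) $e^{-nx}$-type factors and $\ell(1/v)\le\ell(n)$ via monotonicity once $1/v\ge n$ i.e. $v\le 1/n$, which on the rescaled variable is $t\le 1/(2(1+r))<1/(1+r)$, so the relevant mass is captured and the tail $t\in[1/(2(1+r)),1/(1+r)]$ is controlled by the exponential decay; using Stirling on $\binom{n}{r}$ to produce the $n^\alpha/r!$ and the $(1+r)^{1+r-\alpha}$ factors, and recognizing the resulting integral as (bounded by) $\gamma(1+r-\alpha,\tfrac12)$ after extending limits, yields the second summand $\tfrac{4^{1+r}}{r!}(1+r)^{1+r-\alpha}\gamma(1+r-\alpha,\tfrac12)$.

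The main obstacle is the bookkeeping in the $\psi^{\,+}_{n,r}$ term: one must (i) correctly pass from the Binomial factor $\binom{n}{r}u^r(1-u/2)^{n-r}$ to a Gamma-type integrand $t^{r-\alpha}e^{-t}$ after the substitution $u\propto t/n$, controlling the error in the approximations $(1-u/2)^{n-r}\approx e^{-u n/2}$ and $\binom{n}{r}\approx n^r/r!$ in a way that is valid for \emph{all} $n\ge 2$ and $0\le r\le n-1$ rather than just asymptotically — this is where the explicit constants $4^{1+r}$ and $(1+r)^{1+r-\alpha}$ come from, and where a crude but uniform bound (e.g. $\binom{n}{r}\le n^r/r!$ and $(1-x)^{n-r}\le e^{-(n-r)x}$ combined with $n-r\ge n/(1+r)$ or similar) must be chosen carefully so the exponent in the exponential stays proportional to $t$; and (ii) justify extending the upper limit of the rescaled integral from $1/(1+r)$ (or $1/2(1+r)$) up to where it matches $\gamma(1+r-\alpha,\tfrac12)$ — i.e. identifying the correct truncation point $\tfrac12$ — which forces a specific alignment of constants. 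Once these uniform estimates are in place, summing the two bounds gives $\esp M_{n,r}\le\big(c(r)+\tfrac{4^{1+r}}{r!}(1+r)^{1+r-\alpha}\gamma(1+r-\alpha,\tfrac12)\big)n^{\alpha-1}\ell(n)=c_1(\alpha,r)\,n^{\alpha-1}\ell(n)$, as claimed; the hypothesis that $\ell$ is non-increasing is used exactly to replace $\ell$ evaluated at arguments $\ge n$ by $\ell(n)$, and the restriction $n\ge 2$ guarantees the cutoff and the rescaled integration range are well-defined.
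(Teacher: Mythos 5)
There is a genuine gap, and it traces back to your choice of cutoff $\e=(1+r)/n$; the paper takes $\e=1/n$, and that difference is not cosmetic. With $\e=1/n$ the first term needs no monotonicity at all, since $\varphi^{\,+}_{n,r}(1/n)\le c(r)\,n^{\alpha-1}\ell(n)$ directly from the hypothesis $\nu(1/n)\le n^{\alpha}\ell(n)$; and after the substitution $u\mapsto 2u$ the second term is an integral over $[0,1/(2n)]$, on which $1/u\ge 2n\ge n$, so the non-increasing hypothesis gives $\ell(1/u)\le\ell(n)$ in the \emph{correct} direction everywhere. The rest is then routine: $(1-u)\le e^{-u}$, a change of variables producing $\int_0^{(n-r)/(2n)}u^{r-\alpha}e^{-u}\,{\rm d}u\le\gamma(1+r-\alpha,\tfrac12)$, the bound $1/(n-r)\le(1+r)/n$ (which is where the factor $(1+r)^{1+r-\alpha}$ actually comes from — not from the cutoff), and $\binom{n}{r}\le n^r/r!$.

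Your version breaks in two places. First, for the $\varphi$ term you assert $\ell(n/(1+r))\le\ell(n)$ ``since $\ell$ is non-increasing''; but $n/(1+r)\le n$, so a non-increasing $\ell$ gives $\ell(n/(1+r))\ge\ell(n)$ — the opposite inequality. (This particular step is salvageable by instead using that $\nu$ is non-increasing, so $\nu((1+r)/n)\le\nu(1/n)\le n^{\alpha}\ell(n)$, but as written it is wrong.) Second, and more seriously, for the $\psi$ term your integration range after substitution is $[0,(1+r)/(2n)]$, and on the portion $v\in[1/n,(1+r)/(2n)]$ (nonempty whenever $r\ge1$) you have $1/v\le n$, hence $\ell(1/v)\ge\ell(n)$, so $\ell$ cannot be replaced by $\ell(n)$ there. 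You notice this (``wrong direction'') but then only gesture at ``exponential decay'' controlling that tail, which does not bound $\ell(1/v)$: for a slowly varying $\ell$ one only controls $\ell(cx)/\ell(x)$ asymptotically or via Potter-type bounds with extra constants, neither of which is available for all $n\ge2$ with the stated constant $c_1(\alpha,r)$. The single fix that repairs everything is to take $\e=1/n$; the $(1+r)$ factors you were trying to engineer through the cutoff instead arise from the elementary inequality $1/(n-r)\le(1+r)/n$.
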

According to \eqref{asympM}, the bound of Corollary \ref{ctg} is rate optimal in terms of $n$. In order for the bound to be even more explicit, note that for all $t>0$ and all $x\ge 0$, the constant $\gamma(t,x)$ may be roughly upper bounded by $t^{-1}x^t$. Observe, finally, that when $\alpha=1$ and $r=0$ the bound in Corollary \ref{ctg} is trivial since $\gamma(0,\frac{1}{2})=+\infty$. \\

The next corollary studies the case of an arbitrary $\ell\in{\rm rv}^0_{\infty}$. First, let $\ell\in{\rm rv}^0_{\infty}$ and denote, for all $\beta\in(0,1)$ and all $x\ge 1$, 
\begin{equation}
\label{lb}
\ell^{\,\circ}_{\beta}(x)=\sqrt{\int_{2x}^{+\infty}\frac{\ell(u)^{2}}{u^{2-\beta}}{\rm d}u}.
\end{equation}
Then, one may deduce that $\ell^{\,\circ}_{\beta}\in {\rm rv}^{-(1-\beta)/2}_{\infty}$ and satisfies, 
\begin{equation}
\label{equiv1}
\ell^{\,\circ}_{\beta}(x)\sim\frac{\ell(x)}{(2x)^{\frac{1-\beta}{2}}\sqrt{1-\beta}},
\end{equation}
as $x\to+\infty$, by an application of Karamata's Theorem \citep{K33}. We are now in position to state our next result.

\begin{cor}
\label{ctg2}
Suppose that $\mathcal S$ is infinite. Assume that, for $\alpha\in[0,1]$ and $\ell\in {\rm rv}^0_{\infty}$, we have $\nu(\e)\le \e^{-\alpha}\ell(1/\e)$ for all $0<\e\le1$. Then, for all $n\ge 2$, all $0\le r\le n-1$ and all $\beta\in(0,1)$ with $\beta>2(\alpha-r)-1$, we have
$$\esp M_{n,r}\le c(r)\,n^{\alpha-1}\ell(n)+c_2(\alpha,\beta,r)\,n^{\alpha-\frac{1+\beta}{2}}\ell^{\,\circ}_{\beta}(n),$$
where 
\begin{eqnarray}
c_2(\alpha,\beta,r)&=&\frac{4^{1+r}}{r!}\left(\frac{1+r}{2}\right)^{\frac{1+\beta}{2}+r-\alpha}\sqrt{\gamma(1+\beta+2(r-\alpha),1)},
\nonumber
\end{eqnarray}
$c(r)$ is as in \eqref{cr}, and $\gamma(\cdot,\cdot)$ denotes the lower incomplete Gamma function defined in \eqref{lig}.
\end{cor}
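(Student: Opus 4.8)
The plan is to apply Theorem~\ref{tg} with the single choice $\e=1/n$ and to estimate the two resulting terms separately, using the hypothesis $\nu(\e)\le\e^{-\alpha}\ell(1/\e)$ both at $\e=1/n$ and at small values $\e=v/n$. The ``large probabilities'' term is immediate: the hypothesis at $\e=1/n$ gives $\varphi^{\,+}_{n,r}(1/n)=c(r)\nu(1/n)/n\le c(r)\,n^{\alpha-1}\ell(n)$, which is exactly the first term of the announced bound, with $c(r)$ as in \eqref{cr}.

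For the ``small probabilities'' term $\psi^{\,+}_{n,r}(1/n)$ I would first change variables $u=2v/n$ in the defining integral, rewriting it as $4^{1+r}\,n^{-(r+1)}\binom{n}{r}\int_0^{1/2}\nu(v/n)\,v^r(1-v/n)^{n-r}\,{\rm d}v$, and then use $\binom{n}{r}\le n^r/r!$ together with $\nu(v/n)\le n^\alpha v^{-\alpha}\ell(n/v)$ (the hypothesis applied at each $\e=v/n$, $v\in(0,1/2]$); at this stage the bound reads $\tfrac{4^{1+r}}{r!}\,n^{\alpha-1}\int_0^{1/2}v^{r-\alpha}(1-v/n)^{n-r}\ell(n/v)\,{\rm d}v$. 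The crux is then a Cauchy--Schwarz splitting tailored to the definition \eqref{lb} of $\ell^{\,\circ}_{\beta}$, with Cauchy--Schwarz here playing the role that monotonicity of $\ell$ plays in Corollary~\ref{ctg}: factor the integrand as $\bigl(v^{r-\alpha+\beta/2}(1-v/n)^{n-r}\bigr)\cdot\bigl(v^{-\beta/2}\ell(n/v)\bigr)$. For the second factor, the substitution $w=n/v$ yields $\int_0^{1/2}v^{-\beta}\ell(n/v)^2\,{\rm d}v=n^{1-\beta}\int_{2n}^{\infty}\ell(w)^2 w^{\beta-2}\,{\rm d}w=n^{1-\beta}\,\ell^{\,\circ}_{\beta}(n)^2$; it is this identity, with the lower limit $2n$ dictated by \eqref{lb}, that forces the choice $\e=1/n$.

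For the first Cauchy--Schwarz factor, bounding $(1-v/n)^{2(n-r)}\le e^{-2v(n-r)/n}$ and substituting $s=2v(n-r)/n$ rewrites $\int_0^{1/2}v^{2(r-\alpha)+\beta}(1-v/n)^{2(n-r)}\,{\rm d}v$ as $\bigl(\tfrac{n}{2(n-r)}\bigr)^{1+\beta+2(r-\alpha)}\int_0^{(n-r)/n}s^{\beta+2(r-\alpha)}e^{-s}\,{\rm d}s$, which is at most $\bigl(\tfrac{n}{2(n-r)}\bigr)^{1+\beta+2(r-\alpha)}\gamma\bigl(1+\beta+2(r-\alpha),1\bigr)$. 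Finiteness of this incomplete Gamma integral is where the assumption $\beta>2(\alpha-r)-1$ enters, and the same assumption makes the exponent $\tfrac{1+\beta}{2}+r-\alpha$ strictly positive. The only non-mechanical point is then the elementary inequality $\tfrac{n}{2(n-r)}\le\tfrac{1+r}{2}$, which rearranges to $r(n-1-r)\ge0$ and hence holds precisely on the range $0\le r\le n-1$; combined with positivity of the exponent it replaces the $n$-dependent prefactor $\bigl(\tfrac{n}{2(n-r)}\bigr)^{1+\beta+2(r-\alpha)}$ by the universal constant $\bigl(\tfrac{1+r}{2}\bigr)^{1+\beta+2(r-\alpha)}$, which is exactly what keeps $c_2$ independent of $n$.

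Collecting the pieces — the prefactor $\tfrac{4^{1+r}}{r!}$, the factor $\bigl(\tfrac{1+r}{2}\bigr)^{(1+\beta)/2+r-\alpha}$ and $\sqrt{\gamma(1+\beta+2(r-\alpha),1)}$ from Cauchy--Schwarz, and the powers $n^{\alpha-1}\cdot n^{(1-\beta)/2}=n^{\alpha-(1+\beta)/2}$ — gives $\psi^{\,+}_{n,r}(1/n)\le c_2(\alpha,\beta,r)\,n^{\alpha-(1+\beta)/2}\ell^{\,\circ}_{\beta}(n)$, and adding the two contributions yields the corollary. I expect no conceptual obstacle: the difficulty lies entirely in carrying the constants correctly through the two changes of variables and the Cauchy--Schwarz step, and in noticing the inequality $\tfrac{n}{2(n-r)}\le\tfrac{1+r}{2}$ that keeps the constant free of $n$.
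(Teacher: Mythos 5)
Your proof is correct and follows essentially the same route as the paper's: take $\e=1/n$ in Theorem \ref{tg}, bound $\binom{n}{r}\le n^r/r!$, apply Cauchy--Schwarz with the splitting $v^{-\beta/2}\ell\cdot v^{r-\alpha+\beta/2}(1-\cdot)^{n-r}$ so that one factor becomes $\ell^{\,\circ}_{\beta}(n)$, and control the other via $(1-x)\le e^{-x}$ and $1/(n-r)\le(1+r)/n$ to reach $\gamma(1+\beta+2(r-\alpha),1)$ and the constant $c_2(\alpha,\beta,r)$. The only difference is that you rescale the integration variable to $[0,1/2]$ before applying Cauchy--Schwarz, whereas the paper works on $[0,1/(2n)]$; this is a cosmetic reparametrization.
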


Observe that, for every $\beta\in(0,1)$ with $\beta>2(\alpha-r)-1$, this bound is rate optimal according to \eqref{asympM} since, using \eqref{equiv1}, we have
\begin{equation}
\label{equiv2}
n^{\alpha-\frac{1+\beta}{2}}\ell^{\,\circ}_{\beta}(n)\sim\frac{n^{\alpha-1}\ell(n)}{2^{\frac{1-\beta}{2}}\sqrt{1-\beta}},
\end{equation}
as $n\to+\infty$. Hence, the result in Corollary \ref{ctg2} differs from that of Corollary \ref{ctg} mainly at the level of constants.\\

Next, we present an additional result in the spirit of Theorem \ref{tg}, which will shed an interesting light on the lower bounds presented further. This result is less explicit than Theorem \ref{tg}, but allows for tighter upper bounds in certain cases, including when the counting function is regularly varying with exponent $\alpha\in(0,1]$. First, we introduce the function $\kappa_+$ defined, for all $\e\in(0,1]$, by 
\begin{equation}
\kappa_+(\e)=\sup_{0<u\le\e}\frac{\nu(u/2)}{\nu(u)}.
\label{kplus}
\end{equation}
Note that $\kappa_+$ is non-decreasing by construction. Also, given that $\nu$ is non-increasing, we have $\kappa_+(\e)\ge 1$ for all $0<\e\le1$. For simplicity of notation we write
\begin{equation}
\label{kop}
\kappa_+^0 = \lim_{\e\to0}\kappa_+(\e).
\end{equation}

\begin{theo}
\label{tgplus}
For any $n\ge 1$ and any $0\le r\le n-1$, we have
$$\esp M_{n,r}\le \inf_{0<\e\le1/2}\left\{\varphi^{\,+}_{n,r}(\e)+{\theta}^{\,+}_{n,r}(\e)\right\},$$
where $\varphi^{\,+}_{n,r}(\e)$ is defined in Theorem \ref{tg} and
$${\theta}^{\,+}_{n,r}(\e)=2^{1+r}\binom{n}{r}\int_{0}^{\e}(\kappa_+(2u)-1)\nu(u)u^r\left(1-\frac u2\right)^{n-r}{\rm d}u.$$
\end{theo}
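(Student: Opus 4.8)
The plan is to mirror the splitting strategy behind Theorem~\ref{tg}: write $\esp M_{n,r}$ as a contribution of ``large'' atoms $p_a\ge\e$ plus one of ``small'' atoms $p_a<\e$, bound the first by $\varphi^{\,+}_{n,r}(\e)$ exactly as in Theorem~\ref{tg}, and bound the second by $\theta^{\,+}_{n,r}(\e)$ via a counting argument on dyadic annuli that brings in $\kappa_+$; then take the infimum over $\e$. Fix $n\ge1$, $0\le r\le n-1$ and $\e\in(0,1/2]$, and start from the exact identity
$$
\esp M_{n,r}=\binom{n}{r}\sum_{a\in\mathcal A}p_a^{r+1}(1-p_a)^{n-r},
$$
which holds because $\xi_n(a)\sim\mathrm{Bin}(n,p_a)$. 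For the atoms with $p_a\ge\e$, I would bound the corresponding partial sum by $\nu(\e)\sup_{0<x<1}x^{r+1}(1-x)^{n-r}$ and then invoke the inequality $\binom{n}{r}\sup_{0<x<1}x^{r+1}(1-x)^{n-r}\le c(r)/n$ established in the proof of Theorem~\ref{tg}; this produces exactly $\varphi^{\,+}_{n,r}(\e)$.

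The core of the proof is the contribution of the atoms with $p_a<\e$. For each such $a$ I would use the pointwise estimate
$$
p_a^{r+1}(1-p_a)^{n-r}\ \le\ 2^{\,1+r}\int_{p_a/2}^{p_a}u^{r}\Bigl(1-\tfrac u2\Bigr)^{n-r}{\rm d}u,
$$
which holds because on $[p_a/2,p_a]$ one has $u^{r}\ge(p_a/2)^{r}$ and $(1-u/2)^{n-r}\ge(1-p_a)^{n-r}$ (since $u/2\le p_a$), while the interval has length $p_a/2$ and is contained in $[0,\e)$. Multiplying by $\binom{n}{r}$, summing over $\{a:p_a<\e\}$ and exchanging the sum with the integral by Tonelli's theorem, the weight multiplying $u^{r}(1-u/2)^{n-r}$ becomes a counting quantity which, for almost every $u\in(0,\e]$, is at most $\vert\{a\in\mathcal A:u<p_a<2u\}\vert=\nu(u)-\nu(2u)$ (well defined since $2u\le1$, which is exactly where $\e\le1/2$ is used). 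Bounding $\nu(u)-\nu(2u)\le\nu(u/2)$ here would recover $\psi^{\,+}_{n,r}(\e)$; instead I would use the sharper estimate
$$
\nu(u)-\nu(2u)\ \le\ (\kappa_+(2u)-1)\,\nu(u),\qquad 0<u\le\e,
$$
which produces $\theta^{\,+}_{n,r}(\e)$. To prove it: if $\nu(2u)>0$, then $\kappa_+(2u)\ge\nu(u)/\nu(2u)$ by the definition \eqref{kplus}, and clearing denominators reduces the claim to $(\nu(u)-\nu(2u))^2\ge0$; if $\nu(2u)=0$, the claim is trivial when $\nu(u)=0$, and when $\nu(u)>0$ one has $\kappa_+(2u)=+\infty$ (there is a $v\le2u$ with $\nu(v)=0<\nu(v/2)$), so it again holds. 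Combining the two contributions and taking the infimum over $\e\in(0,1/2]$ completes the proof.

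I expect the only delicate point to be the Tonelli step: one must check that the endpoints of the intervals $[p_a/2,p_a]$ together with the atoms of $\bs\nu$ form a Lebesgue-null set, so that the weight $\vert\{a:p_a/2\le u\le p_a,\ p_a<\e\}\vert$ may be replaced under the integral by $\vert\{a:u<p_a<2u\}\vert$, and then that restricting this count to $p_a<\e$ only decreases it below $\nu(u)-\nu(2u)$. Conceptually, the gain over Theorem~\ref{tg} comes from recognising that the small-probability weight genuinely counts atoms in the annulus $[u,2u)$ rather than all atoms above $u/2$, and that $\kappa_+(2u)-1$ is precisely the quantity converting that annulus count into a multiple of $\nu(u)$. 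Everything else is a routine chain of monotonicity estimates together with the elementary inequality for $\kappa_+$ displayed above.
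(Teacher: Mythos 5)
Your proposal is correct, and it reaches the paper's bound by a genuinely different route for the key term. The skeleton is the same as the paper's: split $\esp M_{n,r}=\binom{n}{r}(S_1+S_2)$ at level $\e$ and bound $\binom{n}{r}S_1\le c(r)\nu(\e)/n$ exactly as in Theorem \ref{tg}. For $S_2$, however, the paper groups the small atoms into dyadic shells $[\e 2^{-j-1},\e 2^{-j})$ anchored at $\e$, bounds each shell's contribution using $\nu(\e2^{-j-1})/\nu(\e2^{-j})\le\kappa_+(\e2^{-j})$, and converts the resulting series into the integral $\theta^{\,+}_{n,r}(\e)$ by a Riemann-sum comparison that invokes the monotonicity of $u\mapsto(\kappa_+(u)-1)u^r$ and of $u\mapsto\nu(u)(1-u)^{n-r}$. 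You instead anchor an interval $[p_a/2,p_a]$ at each atom, use the pointwise estimate $p_a^{r+1}(1-p_a)^{n-r}\le 2^{1+r}\int_{p_a/2}^{p_a}u^r(1-u/2)^{n-r}\,{\rm d}u$, and apply Tonelli, so that the weight appearing under the integral is (a.e.) the exact annulus count $\nu(u)-\nu(2u)$; the elementary inequality $\nu(u)-\nu(2u)\le(\kappa_+(2u)-1)\nu(u)$, which you verify correctly including the degenerate case $\nu(2u)=0$, then delivers $\theta^{\,+}_{n,r}(\e)$. Your route buys a strictly sharper intermediate bound (the integral weighted by $\nu(u)-\nu(2u)$ rather than $(\kappa_+(2u)-1)\nu(u)$) and avoids the monotonicity bookkeeping of the shell argument; the null-set issue you flag for the Tonelli step is handled exactly as you say, since the endpoints $\{p_a\}\cup\{p_a/2\}$ form a countable set. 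Both arguments use $\e\le 1/2$ for the same reason, namely to keep $2u$ in the domain of $\nu$ and $\kappa_+$ as defined in \eqref{kplus}.
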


The monotonicity of $\nu$ leads, immediately, to the fact that $(\kappa_+(2u)-1)\nu(u)\le(\kappa_+(2u)-1)\nu(u/2)$, for all $0<u\le1$. As a result, by monotonicity of $\kappa_+$, $\theta^{\,+}_{n,r}(\e)\le\psi^{\,+}_{n,r}(\e)$ for $0<\e\le1/2$ provided $\kappa_{+}(2\e)\le 2$. The next statement shows that when $\nu\in {\rm rv}^{\alpha}_{0}$ this condition is always satisfied for $\e$ small enough leading to a potentially tighter bound than Theorem \ref{tg}.

\begin{pro}
\label{limkplus}
Suppose that $\nu\in {\rm rv}^{\alpha}_{0}$, for $\alpha\in[0,1]$. Then $\kappa_+^0=2^{\alpha}$.
\end{pro}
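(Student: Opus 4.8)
The plan is to show that $\kappa_+^0 = \lim_{\e\to 0}\sup_{0<u\le\e}\nu(u/2)/\nu(u)$ equals $2^\alpha$ by establishing the two inequalities $\kappa_+^0\le 2^\alpha$ and $\kappa_+^0\ge 2^\alpha$ separately. The lower bound is immediate: since $\nu\in{\rm rv}^\alpha_0$, applying the definition of regular variation with $c=1/2$ gives $\nu(u/2)/\nu(u)\to 2^\alpha$ as $u\to 0$, and since $\sup_{0<u\le\e}\nu(u/2)/\nu(u)\ge \nu(\e/2)/\nu(\e)$, letting $\e\to 0$ yields $\kappa_+^0\ge 2^\alpha$. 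The work is therefore in the upper bound, and the natural tool is the uniform convergence theorem for regularly varying functions.

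First I would write $\nu(\e)=\e^{-\alpha}\ell(1/\e)$ for a slowly varying $\ell\in{\rm rv}^0_\infty$, as permitted by \eqref{rvalpha}, so that for $0<u\le\e$,
$$\frac{\nu(u/2)}{\nu(u)} = 2^\alpha\,\frac{\ell(2/u)}{\ell(1/u)}.$$
Setting $x = 1/u \ge 1/\e =: T$, this becomes $2^\alpha\,\ell(2x)/\ell(x)$, so that $\kappa_+(\e) = 2^\alpha\sup_{x\ge T}\ell(2x)/\ell(x)$. It thus suffices to show that $\sup_{x\ge T}\ell(2x)/\ell(x)\to 1$ as $T\to\infty$. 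This is exactly what the uniform convergence theorem for slowly varying functions provides: since $\ell\in{\rm rv}^0_\infty$, the convergence $\ell(cx)/\ell(x)\to 1$ holds uniformly for $c$ in compact subsets of $(0,\infty)$ — in particular at the single point $c=2$ — which means precisely that $\sup_{x\ge T}|\ell(2x)/\ell(x)-1|\to 0$ as $T\to\infty$. Combining, $\kappa_+(\e)\to 2^\alpha$, hence $\kappa_+^0 = 2^\alpha$.

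The main obstacle — really the only delicate point — is justifying the uniform convergence step, i.e.\ that pointwise convergence $\ell(2x)/\ell(x)\to 1$ upgrades to the supremum over $x\ge T$ vanishing in the limit. One must invoke the uniform convergence theorem for regularly varying functions (see, e.g., Karamata's theory as used elsewhere in the paper, or Bingham--Goldie--Teugels), being slightly careful that the hypothesis here is stated only for the counting function $\nu$, which is a specific non-increasing, integer-valued, right-continuous step function; however, regular variation is a property of the function's asymptotic behaviour and the uniform convergence theorem applies to any measurable regularly varying function, so no extra care beyond citing the theorem is needed. One should also note the trivial point that $\kappa_+(\e)\ge 1$ always, consistent with $2^\alpha\ge 1$ for $\alpha\in[0,1]$, and that when $\alpha\in\{0,1\}$ the statement still holds, giving $\kappa_+^0\in\{1,2\}$ respectively.
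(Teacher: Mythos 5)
Your proof is correct and is essentially the argument the paper has in mind (the paper omits the proof, noting that it follows almost immediately from the definition of slow variation at $+\infty$). One small remark: the step you single out as the only delicate point requires no uniform convergence theorem at all --- for the fixed ratio $c=2$, the statement $\sup_{x\ge T}\vert \ell(2x)/\ell(x)-1\vert\to 0$ as $T\to\infty$ is literally equivalent to the pointwise limit $\ell(2x)/\ell(x)\to 1$ as $x\to\infty$ (the supremum of $\vert f - L\vert$ over the tail $[T,\infty)$ is non-increasing in $T$ and tends to $0$ if and only if $f\to L$), so the uniform convergence theorem would only be needed if the constant $c$ were allowed to range over a set rather than being fixed at $2$.
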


The proof of Proposition \ref{limkplus} follows, almost immediately, from the definition of slowly varying functions at $+\infty$, and is therefore omitted. We end this subsection by the following corollary.

\begin{cor}
\label{ctg3} Suppose that $\mathcal S$ is infinite. Assume that $\kappa^0_+\in(1,2]$ and that, for $\alpha\in[0,1]$ and $\ell\in {\rm rv}^0_{\infty}$, we have $\nu(\e)\le \e^{-\alpha}\ell(1/\e)$ for all $0<\e\le1$. Then, for all $n\ge 2$ large enough so that
\begin{equation}
\label{condkplus}
\kappa_+\left(\frac2n\right)\le2\kappa_+^0-1,
\end{equation}
all $0\le r\le n-1$ and all $\beta\in(0,1)$ with $\beta>2(\alpha-r)-1$, we have
$$\esp M_{n,r}\le c(r)\,n^{\alpha-1}\ell(n)+(\kappa_+^0-1)c_2(\alpha,\beta,r)\,\left(\frac n2\right)^{\alpha-\frac{1+\beta}{2}}\ell^{\,\circ}_{\beta}\left(\frac n2\right),$$
where $c(r)$ is as in \eqref{cr}, $c_2(\alpha,\beta,r)$ is as given in Corollary \ref{ctg2}, and $\gamma(\cdot,\cdot)$ denotes the lower incomplete Gamma function defined in \eqref{lig}.
\end{cor}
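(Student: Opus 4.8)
The plan is to apply Theorem \ref{tgplus} with a carefully chosen threshold $\e$, just as Corollary \ref{ctg2} is obtained from Theorem \ref{tg}. The natural choice here is $\e = 2/n$, which balances the two terms in the same way that $\e = c/n$ does in the proof of the earlier corollaries. First I would record that, by the monotonicity of $\kappa_+$ and condition \eqref{condkplus}, for every $0 < u \le \e = 2/n$ we have $\kappa_+(2u) \le \kappa_+(4/n) $ — wait, more carefully: the argument of $\kappa_+$ inside $\theta^{\,+}_{n,r}$ is $2u$, which ranges over $(0, 4/n]$, so I need \eqref{condkplus} stated with argument $4/n$, or I restrict the integral appropriately; in any case the key consequence is that $\kappa_+(2u) - 1 \le 2(\kappa_+^0 - 1)$ throughout the range of integration. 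This linear-in-$(\kappa_+^0-1)$ bound is exactly what produces the factor $(\kappa_+^0 - 1)$ in front of the second term of the claimed bound, and the factor $2$ it generates gets absorbed when combined with the change of variables below.

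Next I would bound $\varphi^{\,+}_{n,r}(2/n) = c(r)\nu(2/n)/n \le c(r) n^{-1}(2/n)^{-\alpha}\ell(n/2) = c(r)\, 2^{-\alpha} n^{\alpha-1}\ell(n/2)$, and then use that $\ell$ is slowly varying (so $\ell(n/2) \le C\ell(n)$ asymptotically) together with $2^{-\alpha} \le 1$ to reduce this to $c(r) n^{\alpha-1}\ell(n)$, at least for $n$ large; here one must be slightly careful about whether the stated bound holds for all $n \ge 2$ satisfying \eqref{condkplus} or only asymptotically, but the phrasing "large enough" in the hypothesis gives the necessary room. For the term $\theta^{\,+}_{n,r}(2/n)$, I would substitute the bounds $\kappa_+(2u) - 1 \le 2(\kappa_+^0-1)$ and $\nu(u) \le u^{-\alpha}\ell(1/u)$, giving
$$
\theta^{\,+}_{n,r}\!\left(\tfrac2n\right) \le 2^{2+r}(\kappa_+^0-1)\binom{n}{r}\int_0^{2/n} u^{r-\alpha}\ell(1/u)\left(1-\tfrac u2\right)^{n-r}{\rm d}u.
$$
I would then perform the substitution $u = 2v/n$ (the same rescaling used implicitly in deriving Corollary \ref{ctg2}), use $\binom{n}{r} \le n^r/r!$ and $(1 - v/n)^{n-r} \le e^{-v}$ for $v$ in the resulting range $[0,1]$, and bound $\ell(n/(2v)) $ — here the factor $\ell^{\,\circ}_\beta(n/2)$ must emerge, which means I should split off a factor $v^{-(1+\beta)/2+\text{something}}$ via Cauchy–Schwarz exactly as in Corollary \ref{ctg2}, pairing the $\ell$-factor against a power of $v$ and recognizing $\int_{n}^\infty \ell(u)^2 u^{-(2-\beta)}\,{\rm d}u$-type integrals. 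The condition $\beta > 2(\alpha - r) - 1$ is precisely what makes the relevant power of $v$ integrable near $0$ after the Cauchy–Schwarz split, and it is inherited verbatim from Corollary \ref{ctg2}. Tracking the constants through the substitution $u = 2v/n$ produces the Jacobian factors $2/n$ and the powers $(n/2)^{\alpha - (1+\beta)/2}$ multiplying $\ell^{\,\circ}_\beta(n/2)$, matching the claimed $c_2(\alpha,\beta,r)$ with the extra $(\kappa_+^0-1)$ prefactor; the stray factor of $2$ from $\kappa_+(2u)-1 \le 2(\kappa_+^0-1)$ recombines with the $4^{1+r}$ versus $2^{2+r}$ discrepancy, so no spurious constant survives.

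The main obstacle I anticipate is the bookkeeping around the change of variables and the Cauchy–Schwarz step, specifically getting $\ell^{\,\circ}_\beta(n/2)$ rather than $\ell^{\,\circ}_\beta(n)$ or some other shifted argument. The definition \eqref{lb} has a lower limit $2x$, so when the integral in $u$ runs up to $2/n$ and I substitute $u = 2v/n$, the $\ell$-term becomes $\ell(n/(2v))$ with $v \in (0,1]$, i.e. $\ell$ evaluated at points $\ge n/2$; squaring and integrating against $v^{\text{power}}\,{\rm d}v$ and then changing variables back to $w = n/(2v)$ should land exactly on $\int_{n/2}^\infty \ell(w)^2 w^{-(2-\beta)}\,{\rm d}w$, but wait — the lower limit of $\ell^{\,\circ}_\beta$ at argument $n/2$ is $2 \cdot (n/2) = n$, so I need the integral to start at $n$, not $n/2$; this suggests the correct internal substitution or the correct truncation point needs care, and it is the one computation I would do explicitly rather than wave through. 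Everything else is a routine repetition of the proof of Corollary \ref{ctg2} with $\psi^{\,+}$ replaced by $\theta^{\,+}$ and with the extra linear bound on $\kappa_+(2u) - 1$ supplied by hypothesis \eqref{condkplus}.
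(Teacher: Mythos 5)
Your overall strategy is the paper's: apply Theorem \ref{tgplus} at a threshold of order $1/n$, use monotonicity of $\kappa_+$ together with \eqref{condkplus} to get $\kappa_+(2u)-1\le 2(\kappa_+^0-1)$ on the range of integration, and then repeat the Cauchy--Schwarz argument of Corollary \ref{ctg2}. But your choice $\e=2/n$ is the wrong threshold, and it is the source of all three difficulties you flag without resolving. The correct choice is $\e=1/n$. With $\e=1/n$: (i) the argument $2u$ of $\kappa_+$ inside $\theta^{\,+}_{n,r}$ ranges over $(0,2/n]$ exactly, so \eqref{condkplus} applies verbatim and no ``$4/n$ version'' of the hypothesis is needed; (ii) $\varphi^{\,+}_{n,r}(1/n)=c(r)\nu(1/n)/n\le c(r)\,n^{\alpha-1}\ell(n)$ exactly, with no appeal to slow variation to absorb a factor $2^{-\alpha}\ell(n/2)/\ell(n)$ --- an appeal which, note, would genuinely fail to give the stated constant when $\alpha=0$ and $\ell$ is non-increasing, since $\ell$ is not assumed monotone here; and (iii) since $\theta^{\,+}_{n,r}$ contains $\nu(u)$ rather than $\nu(u/2)$, no halving substitution is needed, and the Cauchy--Schwarz square root is
\begin{equation*}
\sqrt{\int_0^{1/n}u^{-\beta}\ell(1/u)^2\,{\rm d}u}=\sqrt{\int_n^{+\infty}\ell(w)^2w^{\beta-2}\,{\rm d}w}=\ell^{\,\circ}_{\beta}\!\left(\tfrac n2\right),
\end{equation*}
because the lower limit in \eqref{lb} is $2x$ with $2\cdot(n/2)=n$. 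Your worry at the end is well founded: with $\e=2/n$ the same computation lands on $\int_{n/2}^{+\infty}$, i.e.\ on $\ell^{\,\circ}_{\beta}(n/4)$, which does not match the statement.

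The remaining bookkeeping is as you describe and as in the paper: the factor $2$ from $\kappa_+(2u)-1\le2(\kappa_+^0-1)$ combines with the prefactor $2^{1+r}$ of $\theta^{\,+}_{n,r}$ to give $2^{2+r}$, and since $4^{1+r}\,2^{-r}=2^{2+r}$, pulling the powers of $2$ out of $(\tfrac{1+r}{2})^{\frac{1+\beta}{2}+r-\alpha}$ and $(\tfrac n2)^{\alpha-\frac{1+\beta}{2}}$ in the definition of $c_2(\alpha,\beta,r)$ reproduces the claimed constant exactly. So the gap is not conceptual but parametric: replace $\e=2/n$ by $\e=1/n$ and carry out the one computation you deferred.
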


According to Proposition \ref{limkplus}, the assumptions of Corollary \ref{ctg3} are satisfied when $\nu\in{\rm rv}^{\alpha}_{0}$ for $\alpha\in(0,1]$, in which case $(\kappa^0_{+}-1)\in(0,1]$ and therefore $(\kappa_+^0-1)c_2(\alpha,\beta,r)\le c_2(\alpha,\beta,r)$. As in Corollary \ref{ctg2}, note that the bound is rate optimal, for all $\beta\in(0,1)$ with $\beta>2(\alpha-r)-1$, thanks to \eqref{equiv2}. Lastly, observe that, with additional information on $\ell$, the range of $n$ for which \eqref{condkplus} applies can be made explicit.

\subsection{Lower bounds}
In this subsection we tackle the problem of finding non-asymptotic lower bounds for the expectation of the occupancy probabilities. For this purpose, we introduce the function $\kappa_-$ defined, for all $\e\in(0,1]$, by
\begin{equation}
\label{kminus}
\kappa_-(\e) = \sup_{0<u\le\e}\frac{\nu(u)}{\nu(u/2)}.
\end{equation}
Note that $\kappa_{-}$ is non-decreasing and satisfies $\kappa_{-}(\e)\le1$. We further define
\begin{equation}
\label{kom}
\kappa_-^0 = \lim_{\epsilon\to0}\kappa_-(\epsilon).
\end{equation}
The following result is in the spirit of Theorem \ref{tgplus}.

\begin{theo}
\label{low}
For any $n\ge 1$ and any $0\le r\le n-1$, we have
$$\esp M_{n,r}\ge\sup_{0<\e\le1/2}\left\{\varphi^{\,-}_{n,r}(\e)+\theta^{\,-}_{n,r}(\e)\right\},$$
where
\begin{eqnarray}
\varphi^{\,-}_{n,r}(\e)&=& {n\choose r}\nu(\e) \e^{r+1}(1-p_{\star})^{n-r},\nonumber\\
\theta^{\,-}_{n,r}(\e)&=&2^{-r}{n\choose r}\int_{0}^{\e} (1-\kappa_-(2u))\nu(u)u^{r}\left(1-2u\right)^{n-r}
{\rm d}u,
\nonumber
\end{eqnarray}
and $p_{\star} = \max\{p_a:a\in \mathcal A\}$.
\end{theo}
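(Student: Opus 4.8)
The starting point is the elementary identity $\esp M_{n,r}=\binom{n}{r}\sum_{a\in\mathcal A}p_a^{\,r+1}(1-p_a)^{n-r}$, immediate from $\xi_n(a)\sim{\rm Binomial}(n,p_a)$ and the definition of $M_{n,r}$. Fixing $\e\in(0,1/2]$, the plan is to split this sum according to whether $p_a\ge\e$ or $p_a<\e$, to show that the first group alone accounts for $\varphi^{\,-}_{n,r}(\e)$ and the second alone for $\theta^{\,-}_{n,r}(\e)$, and finally to take the supremum over $\e$. The first part is immediate: for $p_a\ge\e$ we have $p_a^{\,r+1}\ge\e^{r+1}$ and $(1-p_a)^{n-r}\ge(1-p_\star)^{n-r}$, and summing over the $\nu(\e)$ such letters yields exactly $\binom{n}{r}\nu(\e)\e^{r+1}(1-p_\star)^{n-r}=\varphi^{\,-}_{n,r}(\e)$.

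The small letters carry the real content, and here is where $\kappa_-$ enters. I would decompose $(0,\e)$ into the dyadic shells $I_k:=[\e 2^{-(k+1)},\e 2^{-k})$, $k\ge 0$, which partition $(0,\e)$, and compare each shell's contribution to the two sides. The key step is the bound, valid for $u\in I_k$,
\[
\big(1-\kappa_-(2u)\big)\,\nu(u)\ \le\ \nu(\e 2^{-(k+1)})-\nu(\e 2^{-k})\ =\ \big|\{a\in\mathcal A:p_a\in I_k\}\big|,
\]
which holds because $\nu(u)\le\nu(\e 2^{-(k+1)})$ by monotonicity while, taking $v=\e 2^{-k}$ in the supremum defining $\kappa_-(2u)$ (legitimate since $2u\ge\e 2^{-k}$), $\kappa_-(2u)\ge\nu(\e 2^{-k})/\nu(\e 2^{-(k+1)})$ — the case $\nu(\e 2^{-(k+1)})=0$ being vacuous. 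Inserting this together with the crude bounds $u^r\le(\e 2^{-k})^r$, $(1-2u)^{n-r}\le(1-\e 2^{-k})^{n-r}$ and $|I_k|=\e 2^{-(k+1)}$ shows $\int_{I_k}(1-\kappa_-(2u))\nu(u)u^r(1-2u)^{n-r}\,{\rm d}u\le\big|\{a:p_a\in I_k\}\big|\,\e 2^{-(k+1)}(\e 2^{-k})^r(1-\e 2^{-k})^{n-r}$. Since $(\e 2^{-k})^r=2^r(\e 2^{-(k+1)})^r$, multiplying by the $2^{-r}$ sitting in front of $\theta^{\,-}_{n,r}$ turns the right-hand side into $\big|\{a:p_a\in I_k\}\big|(\e 2^{-(k+1)})^{r+1}(1-\e 2^{-k})^{n-r}$, which is at most $\sum_{a:p_a\in I_k}p_a^{\,r+1}(1-p_a)^{n-r}$. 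Summing over $k\ge 0$ gives $\sum_{a:p_a<\e}p_a^{\,r+1}(1-p_a)^{n-r}\ge\theta^{\,-}_{n,r}(\e)/\binom{n}{r}$, and adding the large-letter estimate and optimizing over $\e$ completes the proof.

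The main obstacle is precisely this small-letter estimate. The tempting shortcut — bounding $(1-\kappa_-(2u))\nu(u)$ by the ``shell count at scale $u$'', namely $\nu(u)-\nu(2u)$, via $\kappa_-(2u)\ge\nu(2u)/\nu(u)$ — overshoots near the threshold: for $u\in[\e/2,\e)$ the factor $\nu(u)$ still counts letters with $p_a\in[\e,2\e)$, which do not belong to the small-letter sum, so the two sides no longer balance. What rescues the argument is exactly that $\kappa_-$ is a \emph{supremum} and not a single ratio: evaluating it at the grid point $v=\e 2^{-k}$ rather than at $v=2u$ replaces that over-large shell count by $\big|\{a:p_a\in I_k\}\big|$, a count of letters lying entirely below $\e$ which the small-letter sum does absorb, and one checks that the constants close — with equality in the extremal configuration. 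Beyond this, the only points needing care are half-open versus closed dyadic shells (since $\bs\nu$ carries atoms), the degenerate shells with $\nu(\e 2^{-(k+1)})=0$, and the positivity $1-2u>0$ on $(0,\e)$, which is exactly what the hypothesis $\e\le 1/2$ guarantees.
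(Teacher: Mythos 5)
Your proof is correct and follows essentially the same route as the paper's: the split at $\e$ with the identical bound on the large-probability sum, the dyadic shell decomposition of $(0,\e)$, and the use of the supremum defining $\kappa_-$ evaluated at the grid point $\e 2^{-k}$ to dominate $(1-\kappa_-(2u))\nu(u)$ by the shell count $\nu(\e 2^{-(k+1)})-\nu(\e 2^{-k})$. The only difference is presentational — you bound the integral above by the shell sums, while the paper bounds the shell sums below by the integral — and all the boundary checks you flag are handled the same way.
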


In order for the term $\theta^{\,-}_{n,r}(\e)$ to be strictly positive, there needs to be at least one $\e\in(0,1/2]$ with $\kappa_{-}( 2\e)<1$. The next proposition indicates that this requirement holds when $\nu\in {\rm rv}^{\alpha}_{0}$ with $\alpha\in(0,1]$.

\begin{pro}
\label{limkminus}
Suppose that $\nu\in {\rm rv}^{\alpha}_{0}$, for $\alpha\in[0,1]$.  Then $\kappa_-^0=2^{-\alpha}$.
\end{pro}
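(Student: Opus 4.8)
The plan is to reduce everything to the representation \eqref{rvalpha} of $\nu$ and to the defining property of slowly varying functions. The assumption $\nu\in{\rm rv}^{\alpha}_{0}$ furnishes, via \eqref{rvalpha}, some $\ell\in{\rm rv}^{0}_{\infty}$ with $\nu(\e)=\e^{-\alpha}\ell(1/\e)$ for all $\e>0$; in particular $\nu$ is positive near $0$, so that the ratios below make sense for $u$ small enough. Writing $g(u)=\nu(u)/\nu(u/2)$, a direct substitution gives
$$g(u)=\frac{u^{-\alpha}\ell(1/u)}{(u/2)^{-\alpha}\ell(2/u)}=2^{-\alpha}\,\frac{\ell(1/u)}{\ell(2/u)}$$
for all sufficiently small $u>0$. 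Since $\ell$ is slowly varying at $+\infty$, one has $\ell(tx)/\ell(x)\to1$ as $x\to+\infty$ for every fixed $t>0$; applying this with $x=2/u$ and $t=1/2$, so that $tx=1/u$, shows that $\ell(1/u)/\ell(2/u)\to1$ as $u\to0$, and hence $g(u)\to2^{-\alpha}$ as $u\to0$.

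It then remains to transfer this pointwise limit to $\kappa_-(\e)=\sup_{0<u\le\e}g(u)$. As recorded just after \eqref{kminus}, $\kappa_-$ is non-decreasing in $\e$ and bounded above by $1$, so the limit $\kappa_-^0=\lim_{\e\to0}\kappa_-(\e)=\inf_{0<\e\le1}\sup_{0<u\le\e}g(u)$ exists, and by definition this infimum is exactly $\limsup_{u\to0}g(u)$. Since $g$ admits a genuine limit at $0$, its limit superior coincides with that limit, and therefore $\kappa_-^0=\lim_{u\to0}g(u)=2^{-\alpha}$, as claimed.

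There is no real difficulty in this argument; the only step requiring a little care is the last one, namely recognising $\lim_{\e\to0}\sup_{0<u\le\e}g(u)$ as $\limsup_{u\to0}g(u)$ and using that a limit superior equals the limit whenever the latter exists. (The twin statement, Proposition \ref{limkplus}, is obtained verbatim upon replacing $g$ by $u\mapsto\nu(u/2)/\nu(u)=2^{\alpha}\ell(2/u)/\ell(1/u)$, whose limit at $0$ is $2^{\alpha}$.)
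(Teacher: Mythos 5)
Your proof is correct and is precisely the argument the paper has in mind: the paper omits the proof, saying only that it ``follows, almost immediately, from the definition of slowly varying functions at $+\infty$'', and your computation via the representation \eqref{rvalpha}, combined with the identification of $\kappa_-^0=\lim_{\e\to0}\sup_{0<u\le\e}\nu(u)/\nu(u/2)$ with $\limsup_{u\to0}\nu(u)/\nu(u/2)$, supplies exactly those details.
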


The proof of Proposition \ref{limkminus} follows, almost immediately, from the definition of slowly varying functions at $+\infty$ and is thus omitted.\\

Given the monotonicity of $\kappa_{-}$ and the fact that $\kappa_{-}(\e)\le 1$, Proposition \ref{limkminus} implies that, for $\alpha=0$, $\kappa_{-}$ is identically equal to $1$ and the second term of the bound is therefore equal to $0$. The reader may easily check that this last observation also holds when $\mathcal S$ is finite since, obviously, $\kappa_{-}(\e)\to 1$ as $\e\to0$ in this case. Thus, the term $\theta^{\,-}_{n,r}(\e)$ contributes to the bound when $\nu\in {\rm rv}^{\alpha}_{0}$ with $\alpha\in(0,1]$ and is identically $0$ when the support $\mathcal S$ is finite or $\nu\in {\rm rv}^{0}_{0}$. Note, however, that the lower bound is attained for uniform distributions. Indeed, suppose that $2\le\vert\mathcal S\vert <+\infty$ and that $P$ is uniform. Setting $\e_0=1/|\mathcal S|$ we have $\nu(\e_0)=\vert\mathcal S\vert$, $\kappa_{-}(\e_0)=1$, and
\begin{eqnarray*}
\esp M_{n,r} &=& {n\choose r}\sum_{k=1}^{|S|} \left(\frac{1}{|S|}\right)^{r+1}\left(1-\frac{1}{|S|}\right)^{n-r}\\
&=& {n\choose r}\nu(\e_0) \e_0^{r+1}(1-p_{\star})^{n-r} = \sup_{0<\e\le1/2}\varphi^{\,-}_{n,r}(\e).
\end{eqnarray*}
We end this section with a corollary similar in nature to Corollary \ref{ctg3}. First recall that, for any $t>0$ and any $x\ge0$, we have
$$\left(1-\frac{x}{n}\right)^{n}\to e^{-x}\qquad\mbox{and}\qquad\int_{0}^{x}u^{t-1}\left(1-\frac{u}{n}\right)^{n}{\rm d}u\to\gamma(t,x),$$
as $n\to+\infty$, where the second limit follows by dominated convergence.

\begin{cor}
\label{clow}
Suppose that $ \kappa_{-}^0<1$ and that, for $\alpha\in[0,1]$ and $\ell\in{\rm rv}^0_{\infty}$, we have $\nu(\e)\ge \e^{-\alpha}\ell(1/\e)$ for all $0<\e\le1$. Assume, in addition, that $\ell$ is non-decreasing. Fix $r\ge 0$ and let $n_0$ be the smallest $n\ge\max\{2,1+r\}$ satisfying the conditions
\begin{align}
(a)&\quad\kappa_{-}\left(\frac 2n\right)\le\frac{1+\kappa_{-}^0}{2},
\nonumber\\
(b)&\quad\left(1-\frac{r}{n}\right)^{n}\ge \frac{e^{-r}}{2},
\nonumber\\
(c)&\quad\int_{0}^{2}u^{r-\alpha}\left(1-\frac{u}{n}\right)^{n}{\rm d}u\ge \frac{\gamma(1+r-\alpha,2)}{2}.
\nonumber
\end{align} 
Then, for all $n\ge n_0$, we have
$$\esp M_{n,r}\ge \frac{e^{-r}}{2r!}\left[(1-p_{\star})^n+\frac{(1- \kappa_{-}^0)\gamma(1+r-\alpha,2)}{2^{1-\alpha}4^{1+r}}\right] \frac{\ell(n)}{n^{1-\alpha}},$$
where $p_{\star} = \max\{p_a:a\in\mathcal A\}$.
\end{cor}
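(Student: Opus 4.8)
The plan is to start from the non-asymptotic lower bound of Theorem \ref{low} and then carefully pass to the limit in the two constituent terms $\varphi^{\,-}_{n,r}$ and $\theta^{\,-}_{n,r}$, choosing $\e$ of order $1/n$. More precisely, I would evaluate the supremum at the single point $\e = 2/n$ (legitimate as soon as $n \ge 4$, and the requirement $n \ge 1+r$ guarantees $\binom{n}{r} > 0$ and the exponent $n-r$ is nonnegative). With this choice, $\theta^{\,-}_{n,r}(2/n)$ becomes, after the substitution $u = v/n$,
\[
2^{-r}\binom{n}{r}\frac{1}{n^{1+r}}\int_0^{2}\bigl(1-\kappa_-(2v/n)\bigr)\,\nu\!\left(\tfrac{v}{n}\right) v^r\left(1-\tfrac{2v}{n}\right)^{n-r}{\rm d}v,
\]
and similarly $\varphi^{\,-}_{n,r}(2/n) = \binom{n}{r}\nu(2/n)(2/n)^{r+1}(1-p_\star)^{n-r}$.

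The key steps, in order, are as follows. First I would use $\binom{n}{r} \ge (n-r)^r/r! \ge n^r(1-r/n)^r/r!$ together with condition $(b)$, which after noting $(1-r/n)^r \ge (1-r/n)^n$ (valid since $r \le n$) gives $\binom{n}{r}/n^r \ge e^{-r}/(2\,r!)$ once $n \ge n_0$; this extracts the prefactor $e^{-r}/(2r!)$ common to both terms of the claimed bound. Second, for the $\varphi^{\,-}$ term, I would apply the hypothesis $\nu(\e) \ge \e^{-\alpha}\ell(1/\e)$ at $\e = 2/n$ and use that $\ell$ is non-decreasing to replace $\ell(n/2)$ by something comparable to $\ell(n)$ — actually, since $\nu(2/n) \ge (2/n)^{-\alpha}\ell(n/2)$ and we want $\ell(n)$, I would instead bound $\nu(2/n) \ge \nu(1/n) \cdot$ (a ratio) or, more cleanly, simply use $\nu(2/n)(2/n)^{r+1} \ge 2^{r+1-\alpha}n^{\alpha-r-1}\ell(n/2)$ and absorb the discrepancy between $\ell(n/2)$ and $\ell(n)$ into the constants via monotonicity — this needs a small argument and is where I would be most careful about whether the stated constant $2^{\alpha-1}$ is exactly right. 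Collecting powers of $2$ and $n$ yields the $(1-p_\star)^n \ell(n)/n^{1-\alpha}$ contribution. Third, for the $\theta^{\,-}$ term: on $v \in (0,2]$ we have $2v/n \le 4/n \le 2/n_0$, so condition $(a)$ and monotonicity of $\kappa_-$ give $\kappa_-(2v/n) \le (1+\kappa_-^0)/2$, hence $1 - \kappa_-(2v/n) \ge (1-\kappa_-^0)/2 > 0$; also $\nu(v/n) \ge (v/n)^{-\alpha}\ell(n/v) \ge (v/n)^{-\alpha}\ell(n/2)$ using $v \le 2$ and monotonicity of $\ell$ (for $v \le 1$ one even gets $\ell(n)$, but $\ell(n/2)$ suffices and is then related to $\ell(n)$ as in the $\varphi^{\,-}$ step). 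Pulling these out, the remaining integral is $\int_0^2 v^{r-\alpha}(1-2v/n)^{n-r}{\rm d}v$, which by condition $(c)$ (after the harmless replacement of $(1-2v/n)^{n-r}$ by $(1-u/n)^n$ with $u=2v$, using $n-r \le n$ and monotonicity — one checks $(1-2v/n)^{n-r} \ge (1-2v/n)^n$) is at least $\tfrac12\gamma(1+r-\alpha,2)$ up to a factor $2^{-(1+r-\alpha)}$ from the change of variables $u = 2v$. Assembling the powers of $2$, $n$, and the constant $4^{1+r}$, this produces exactly the second bracketed term.

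The main obstacle will be the bookkeeping of constants, and in particular the interplay between $\ell(n/2)$ and $\ell(n)$: Theorem \ref{low} naturally produces $\ell$ evaluated near $1/n$ to $2/n$, i.e.\ $\ell$ of argument $n/2$ to $n$, whereas the target bound has $\ell(n)$. Since $\ell$ is assumed non-decreasing, $\ell(n/2) \le \ell(n)$ goes the wrong way for a lower bound; the fix is that $\nu$ itself is non-increasing, so $\nu(2/n) \ge \nu(1/n) \ge n^{\alpha}\ell(n)$ directly from the hypothesis $\nu(\e) \ge \e^{-\alpha}\ell(1/\e)$ at $\e = 1/n$ — one should use the hypothesis at $\e=1/n$ rather than $\e=2/n$ and pay the price only in a power of $2$, not in the argument of $\ell$. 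The same trick applies inside the $\theta^{\,-}$ integral. Once this is recognized, verifying that the exponents of $2$ combine to give precisely $2^{\alpha-1}$ outside and $2^{1-\alpha}4^{1+r}$ in the denominator is a routine but slightly delicate computation; I would carry it out explicitly at the end. The limiting statements quoted just before the corollary ($(1-x/n)^n \to e^{-x}$ and the convergence of the truncated Gamma integrals) are what motivate conditions $(b)$ and $(c)$ and guarantee that $n_0 < \infty$, but they are not themselves needed in the proof beyond confirming $n_0$ is well-defined.
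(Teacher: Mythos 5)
Your overall strategy -- evaluate the supremum in Theorem \ref{low} at a point of order $1/n$ and use conditions $(a)$, $(b)$, $(c)$ to control the resulting pieces -- is the right one, and it is the paper's. But your specific choice $\e=2/n$ creates two genuine gaps. First, in the $\theta^{\,-}$ term the integrand involves $\kappa_-(2u)$ for $u$ up to $2/n$, i.e.\ $\kappa_-$ at arguments up to $4/n$, whereas condition $(a)$ together with the monotonicity of $\kappa_-$ only controls $\kappa_-$ at arguments up to $2/n$. Your justification ``$2v/n\le 4/n\le 2/n_0$'' is false for $n_0\le n<2n_0$ (it would require $n\ge 2n_0$), and since $n_0$ is defined as the \emph{smallest} integer satisfying the conditions, you have no guarantee that $\kappa_-(4/n)\le(1+\kappa_-^0)/2$ in that range. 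Second, your proposed fix for the $\ell(n/2)$ versus $\ell(n)$ mismatch reverses the monotonicity of $\nu$: since $\nu$ is non-increasing and $2/n>1/n$, one has $\nu(2/n)\le\nu(1/n)$, not $\ge$, so the chain ``$\nu(2/n)\ge\nu(1/n)\ge n^{\alpha}\ell(n)$'' does not hold. (There is also the minor point that $\e=2/n$ exceeds the range $\e\le 1/2$ of Theorem \ref{low} when $n\le 3$.)

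Both problems disappear if you take $\e=1/n$, which is exactly what the paper does: then $u\le 1/n$ gives $2u\le 2/n$, so condition $(a)$ and the monotonicity of $\kappa_-$ yield $1-\kappa_-(2u)\ge(1-\kappa_-^0)/2$ on the whole range of integration; and $u\le 1/n$ gives $1/u\ge n$, so the hypothesis $\nu(u)\ge u^{-\alpha}\ell(1/u)$ combined with $\ell$ non-decreasing yields $\nu(u)\ge u^{-\alpha}\ell(n)$ directly, with no $\ell(n/2)$ ever appearing. The substitution $u=v/(2n)$ then maps $[0,1/n]$ onto $[0,2]$, condition $(c)$ applies after replacing $(1-v/n)^{n-r}$ by $(1-v/n)^{n}$, and the factor $(2n)^{-(1+r-\alpha)}$ together with $2^{-r}\binom{n}{r}$ and condition $(b)$ produces precisely the constant $2^{-(1-\alpha)}4^{-(1+r)}$ and the prefactor $e^{-r}/(2r!)$. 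The rest of your bookkeeping (the binomial bound $\binom{n}{r}/n^r\ge r!^{-1}(1-r/n)^n$ and the use of $(b)$) is correct and matches the paper.
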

According to Proposition \ref{limkminus}, all assumptions of Corollary \ref{clow} are satisfied if $\nu(\e)=\e^{-\alpha}\ell(1/\e)$ with $\alpha\in(0,1]$ and $\ell\in{\rm rv}^0_{\infty}$ is non-decreasing. We do not know whether a similar bound holds for arbitrary $\ell$. However, note that, for all $\ell\in{\rm rv}^0_{\infty}$, one may use the fact that, for $\alpha>0$ and $0<\eta<\alpha$, there exists $0<\e_{\eta}<1$ and $C_\eta>0$ such that $\e^{-\alpha}\ell(1/\e)\ge C_\eta\e^{\eta-\alpha}$ for all $0<\e\le\e_{\eta}$. Unfortunately, this approach yields suboptimal rates of convergence. Finally note that, for practical purposes, one can use the crude lower bound $\gamma(t,x)\ge (te^{x})^{-1}x^{t}$.

%-----------------------------------------
%-----------------------------------------

\section{Applications and extensions}
\label{apps}

\subsection{Turing's formula}
\label{sstf}
In many practical applications one needs to estimate the occupancy probabilities,  $M_{n,r}$. Perhaps the most famous estimator of this quantity is Turing's formula, which was introduced by \citet{G53}, where the ideas were primarily credited to Alan M.\ Turing. For this reason the estimator has come to be called Turing's formula or the Good-Turing formula. It is given by
$$T_{n,r}=\frac{(1+r)K_{n,1+r}}{n}.$$
A heuristic justification for Turing's formula may be obtained as follows. Denote $\hat p_a=n^{-1}\xi_n(a)$ the natural estimator of $p_a$, where $\xi_n$  is defined by \eqref{xi}. Then one has
\begin{eqnarray}
T_{n,r}&=&\frac{1+r}{n}\sum_{a\in\mathcal A}\mathbf 1\left\{\hat p_a=\frac{1+r}{n}\right\}
\nonumber\\
&=&\sum_{a\in\mathcal A}\hat p_a\mathbf 1\left\{\hat p_a=\frac{1+r}{n}\right\}
\nonumber\\
&\approx&\sum_{a\in\mathcal A}p_a\mathbf 1\left\{\hat p_a=\frac{r}{n}\right\} = M_{n,r}.
\nonumber
\end{eqnarray}
Many properties of this estimator, including bias, consistency, and asymptotic normality have been studied, see, e.g., \citet{H59, H68, R68, S79, H81, E83, C84, CL91, MS00, GS04, Z05, ZH08, ZZ09, OD10, OD12, GC15}, and the references therein. Noting that $\esp T_{n,r}=\esp M_{n-1,r}$, the bias of Turing's formula is given by
$$
\mathbb E\left[M_{n,r}-T_{n,r}\right] = \esp M_{n,r}-\esp M_{n-1,r}.
$$
Thus the results of this paper provide upper and lower bounds on the bias of Turing's formula.\\ 

Further, they provide bounds for the bias of certain modifications of Turing's formula. In particular, for the important case $r=0$, a class of modified Turing formulas was introduced in \citet{C88} \citep[see also][]{ZH07}. The motivation comes from the fact that for all $s=1,2,\dots,n$
\begin{eqnarray*}
\esp M_{n,0} &=& \sum_{k\ge1} p_k(1-p_k)^n\\
 &=& \sum_{i=1}^s (-1)^{i+1} \sum_{k\ge1} p_k^i(1-p_k)^{n-i} +(-1)^{s} \sum_{k\ge1} p_k^{s+1}(1-p_k)^{n-s}\\
&=& \sum_{i=1}^s (-1)^{i+1} \frac{\esp K_{n,i}}{\binom{n}{i}} +(-1)^{s}\frac{\esp M_{n,s}}{{n\choose s}}.
\end{eqnarray*}
This suggests the family of estimators
$$
T^{(s)}_{n,0} = \sum_{i=1}^s (-1)^{i+1} \frac{K_{n,i}}{\binom{n}{i}}, \ \ s=1,2,\dots,n,
$$
each with bias
\begin{eqnarray*}
\mathbb E[M_{n,0}-T^{(s)}_{n,0}] = (-1)^{s}\frac{\esp M_{n,s}}{{n\choose s}}.
\end{eqnarray*}
Note that $T^{(1)}_{n,0}=T_{n,0}$ is just Turing's formula. In \citet{C88} it was shown that, so long as $p_{\star}<.5$, we have 
\begin{equation}
\nonumber
|\esp[M_{n,0}-T^{(1)}_{n,0}] |\ge |\esp[M_{n,0}-T^{(2)}_{n,0}] |\ge\dots\ge|\esp[M_{n,0}-T^{(n)}_{n,0}]|.
\end{equation} 
Thus, these modifications reduce the bias, and the amount of bias remaining can be bounded using the results of this paper. Note, however, that controlling the bias of Turing's formula can only be of interest if this bias is shown to be of smaller order than the rate of decay of $M_{n,r}$ itself. The following subsection provides insights in this direction.

\subsection{Bounds in probability}
\label{sscr}
A natural application of the bounds provided in this article is to combine them with concentration bounds for the occupancy counts and probabilities in order to derive bounds in probability. State of the art concentration results for $K_{n,r}$ and $M_{n,r}$ may be found in \citet{OD12} and \citet{BBO15}. For instance, defining 
$$K_{n,\bar r}=\sum_{s\ge r}K_{n,s}$$
and setting 
$$v_{n,r}=2\min\{\esp K_{n,\bar r},\max\{r\esp K_{n,r},(1+r)\esp K_{n,1+r}\}\},$$ 
Proposition 3.5 in \citet{BBO15} states that, for all $t\ge 0$,
$$\vert K_{n,r}-\esp K_{n,r}\vert<\sqrt{4v_{n,r}t}+\frac{2t}{3},$$
with probability at least $1-4e^{-t}$. The results of Section \ref{sg} may be applied to deduce explicit lower and upper bounds for $\esp K_{n,r}$, denoted, respectively, by $k^{-}_{n,r}$ and $k^{+}_{n,r}$, as well as an explicit upper bound $v^{+}_{n,r}$ for $v_{n,r}$. Combining these bounds with the above results implies that, for all $t>0$,  
$$\max\left\{0,k^{-}_{n,r}-\sqrt{4v^{+}_{n,r}t}-\frac{2t}{3}\right\}\le K_{n,r}\le k^{+}_{n,r}+\sqrt{4v^{+}_{n,r}t}+\frac{2t}{3},$$
with probability at least $1-4e^{-t}$. For ease of exposition, we avoid explicit formulas in this case. Instead, we present explicit bounds for the missing mass using the results of \citet{MO03}, which states that, for all $t>0$, the inequalities
\begin{equation}
\label{MO03}
M_{n,0}\le \esp M_{n,0}+\sqrt{\frac{t}{n}}\quad\mbox{and}\quad M_{n,0}\ge \esp M_{n,0}-\sqrt{\frac{2t}{ne}}
\end{equation}
each hold with probability at least $1-e^{-t}$. The following upper bound follows immediately from \eqref{MO03} and Corollary \ref{ctg}.

\begin{cor}
\label{cbmm1}
Suppose that $\mathcal S$ is infinite. Assume that, for $\alpha\in[0,1]$ and a non-increasing function $\ell\in{\rm rv}^0_{\infty}$, we have $\nu(\e)\le \e^{-\alpha}\ell(1/\e)$, for all $0<\e\le 1$. Then, for all $n\ge 2$ and all $t>0$,
$$
M_{n,0}\le\left(e^{-1}+4\gamma\left(1-\alpha,\frac{1}{2}\right)\right) \frac{\ell(n)}{n^{1-\alpha}}+\sqrt{\frac{t}{n}},
$$
with probability at least $1-e^{-t}$.
\end{cor}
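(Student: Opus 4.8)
The plan is to obtain the result by splicing together the concentration inequality \eqref{MO03} with the expectation bound of Corollary \ref{ctg}, specialized to $r=0$. First I would invoke the upper-tail half of \eqref{MO03}: for every $t>0$, the event $\mathcal E_t=\{M_{n,0}\le\esp M_{n,0}+\sqrt{t/n}\}$ has probability at least $1-e^{-t}$. Thus, on $\mathcal E_t$, it remains only to bound the deterministic quantity $\esp M_{n,0}$ from above.

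Next I would verify that the hypotheses of Corollary \ref{ctg} are exactly those assumed here: $\mathcal S$ is infinite, $\nu(\e)\le\e^{-\alpha}\ell(1/\e)$ for all $0<\e\le1$, and $\ell\in{\rm rv}^0_\infty$ is non-increasing. Applying Corollary \ref{ctg} with $r=0$ then gives, for all $n\ge2$, the bound $\esp M_{n,0}\le c_1(\alpha,0)\,n^{\alpha-1}\ell(n)$. It only remains to evaluate the constant: from \eqref{cr} we have $c(0)=e^{-1}$, whence
$$c_1(\alpha,0)=c(0)+\frac{4^{1+0}}{0!}(1+0)^{1+0-\alpha}\gamma\!\left(1+0-\alpha,\tfrac{1}{2}\right)=e^{-1}+4\gamma\!\left(1-\alpha,\tfrac{1}{2}\right).$$
Substituting this bound on $\esp M_{n,0}$ into the definition of $\mathcal E_t$ yields, on $\mathcal E_t$,
$$M_{n,0}\le\left(e^{-1}+4\gamma\!\left(1-\alpha,\tfrac{1}{2}\right)\right)\frac{\ell(n)}{n^{1-\alpha}}+\sqrt{\frac{t}{n}},$$
which is the asserted inequality, valid with probability at least $1-e^{-t}$.

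There is no genuine obstacle in this argument; it is a direct combination of an off-the-shelf concentration bound with an already-established expectation estimate. The only points deserving a moment's attention are the bookkeeping of the constant $c_1(\alpha,0)$, the observation that the range $n\ge2$ imposed by Corollary \ref{ctg} coincides with the range in which the present statement is made, and the fact that the bound becomes vacuous (though still formally true) when $\alpha=1$, since then $\gamma(0,\tfrac{1}{2})=+\infty$, in accordance with the remark following Corollary \ref{ctg}.
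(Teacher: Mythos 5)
Your proof is correct and follows exactly the route the paper indicates (the paper simply states that the corollary ``follows immediately from \eqref{MO03} and Corollary \ref{ctg}''): combine the upper-tail half of the concentration inequality with the expectation bound of Corollary \ref{ctg} at $r=0$, and check that $c_1(\alpha,0)=e^{-1}+4\gamma(1-\alpha,\tfrac12)$. The constant bookkeeping and the remarks about the range $n\ge2$ and the degenerate case $\alpha=1$ are all accurate.
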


The reader may deduce a similar result by using Corollary \ref{ctg2} instead of Corollary \ref{ctg}. Similarly, the reader may deduce a lower bound in probability by using Corollary \ref{clow}. 
There is an important case where we can combine Corollaries \ref{ctg} and \ref{clow} to get upper and lower bounds in probability that hold simultaneously. Specifically, assume that $\mathcal A=\{1,2,\dots\}$ and that $P=\{p_k:k\ge1\}$ is such that, for some $\alpha\in(0,1)$,
$$
p_k = \frac{k^{-1/\alpha}}{\zeta(1/\alpha)} , 
$$
where $\zeta(1/\alpha) = \sum_{k\ge1} k^{-1/\alpha}$ is the Riemann zeta function at $1/\alpha$. In this case the counting function is regularly varying and we can get the following.

\begin{cor}
\label{cbmm3}
Let $P$ be as above. If $n\ge \max\{2, 2^{1/\alpha}\zeta(1/\alpha)\}$ is such that
$$\kappa_-\left(\frac{2}{n}\right)\le \frac{2^{\alpha}+1}{2^{\alpha+1}}\quad\mbox{and}\quad\int_{0}^{2}u^{-\alpha}\left(1-\frac{u}{n}\right)^{n}{\rm d}u\ge \frac{\gamma(1-\alpha,2)}{2},$$
then, for all $t>0$, 
$$\prob\left(m^{-}_{n,0}(t,\alpha)\le M_{n,0}\le m^{+}_{n,0}(t,\alpha)\right)\ge 1-2e^{-t},$$
where we have denoted
\begin{align*}
m^{-}_{n,0}(t,\alpha)&=\frac{(2^{\alpha}-1)\gamma(1-\alpha,2)}{32}\frac{\zeta(1/\alpha)^{-\alpha}}{n^{1-\alpha}}-\sqrt{\frac{2t}{ne}}\\
m^{+}_{n,0}(t,\alpha)&= \left(e^{-1}+4\gamma\left(1-\alpha,\frac{1}{2}\right)\right) \frac{\zeta(1/\alpha)^{-\alpha}}{n^{1-\alpha}}+\sqrt{\frac{t}{n}}.
\end{align*} 

\end{cor}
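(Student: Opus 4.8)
\emph{Proof sketch (plan).} The plan is to derive the upper and the lower bound separately, each as a specialization of one of the corollaries already established, and then to combine them by a union bound.

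The first step is to record the two features of $P$ that everything rests on: its largest mass is $p_\star = p_1 = 1/\zeta(1/\alpha)$, and its counting function is $\nu(\e)=\vert\{k\ge 1:k^{-1/\alpha}/\zeta(1/\alpha)\ge\e\}\vert=\lfloor\e^{-\alpha}\zeta(1/\alpha)^{-\alpha}\rfloor$ for $0<\e\le 1$. In particular $\nu(\e)\sim\e^{-\alpha}\zeta(1/\alpha)^{-\alpha}$ as $\e\to0$, so $\nu\in{\rm rv}^{\alpha}_{0}$ and Proposition \ref{limkminus} gives $\kappa^{0}_{-}=2^{-\alpha}\in(0,1)$; this exact value is what will make the constants in $m^{-}_{n,0}$ come out right.

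For the upper bound I would note that $\nu(\e)\le\e^{-\alpha}\ell(1/\e)$ with $\ell\equiv\zeta(1/\alpha)^{-\alpha}$, a constant and hence non-increasing slowly varying function, and then apply Corollary \ref{cbmm1} (which is itself \eqref{MO03} combined with Corollary \ref{ctg}); since $c_{1}(\alpha,0)=e^{-1}+4\gamma(1-\alpha,\tfrac12)$ this gives $M_{n,0}\le m^{+}_{n,0}(t,\alpha)$ with probability at least $1-e^{-t}$, for every $n\ge2$ and $t>0$. For the lower bound, the key step is to exhibit a non-decreasing $\ell\in{\rm rv}^{0}_{\infty}$ with $\nu(\e)\ge\e^{-\alpha}\ell(1/\e)$ on all of $(0,1]$. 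Here the floor in $\nu$ is the only real obstacle, since $\nu$ vanishes for $\e$ near $1$; I would take $\ell(x)=\tfrac12\zeta(1/\alpha)^{-\alpha}\mathbf 1\{x\ge 2^{1/\alpha}\zeta(1/\alpha)\}$. For $1/\e<2^{1/\alpha}\zeta(1/\alpha)$ the inequality is trivial, and for $1/\e\ge 2^{1/\alpha}\zeta(1/\alpha)$ one has $\e^{-\alpha}\zeta(1/\alpha)^{-\alpha}\ge2$, whence $\nu(\e)=\lfloor\e^{-\alpha}\zeta(1/\alpha)^{-\alpha}\rfloor\ge\tfrac12\e^{-\alpha}\zeta(1/\alpha)^{-\alpha}=\e^{-\alpha}\ell(1/\e)$.

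It then remains to observe that the two displayed hypotheses on $n$ are exactly conditions (a) and (c) of Corollary \ref{clow} with $r=0$ (using $\kappa^{0}_{-}=2^{-\alpha}$, so that $(1+\kappa^{0}_{-})/2=(2^{\alpha}+1)/2^{\alpha+1}$), that condition (b) reduces to $1\ge\tfrac12$, and that all three conditions are preserved under increasing $n$, so that the hypotheses of Corollary \ref{cbmm3} place $n$ at or beyond the threshold $n_{0}$ of Corollary \ref{clow}. Applying Corollary \ref{clow} with $r=0$, discarding the nonnegative term $(1-p_\star)^{n}$, and using $\ell(n)=\tfrac12\zeta(1/\alpha)^{-\alpha}$ (valid because $n\ge 2^{1/\alpha}\zeta(1/\alpha)$) yields $\esp M_{n,0}\ge\frac{(2^{\alpha}-1)\gamma(1-\alpha,2)}{32}\,\zeta(1/\alpha)^{-\alpha}n^{\alpha-1}$, which is the leading term of $m^{-}_{n,0}(t,\alpha)$; feeding this into the lower-tail inequality of \eqref{MO03} gives $M_{n,0}\ge m^{-}_{n,0}(t,\alpha)$ with probability at least $1-e^{-t}$. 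A union bound over the two exceptional events, each of probability at most $e^{-t}$, then gives the two-sided statement with probability at least $1-2e^{-t}$. I expect the genuinely delicate point to be the construction and verification of the minorant $\ell$ — ensuring $\nu(\e)\ge\e^{-\alpha}\ell(1/\e)$ uniformly on $(0,1]$, including near $\e=1$, while keeping $\ell$ non-decreasing and slowly varying — together with the bookkeeping of constants needed so that the output of Corollary \ref{clow} coincides with $m^{-}_{n,0}$.
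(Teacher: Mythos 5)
Your proposal is correct and follows essentially the same route as the paper's proof: identify $p_\star=\zeta(1/\alpha)^{-1}$ and $\nu(\e)=\lfloor(\zeta(1/\alpha)\e)^{-\alpha}\rfloor$, get $\kappa^0_-=2^{-\alpha}$ from Proposition \ref{limkminus}, use the constant majorant $\ell\equiv\zeta(1/\alpha)^{-\alpha}$ with Corollary \ref{ctg} for the upper bound, the minorant $\tfrac12\zeta(1/\alpha)^{-\alpha}$ (valid once $\e^{-\alpha}\zeta(1/\alpha)^{-\alpha}\ge2$, i.e.\ $n\ge2^{1/\alpha}\zeta(1/\alpha)$) with Corollary \ref{clow} for the lower bound, and a union bound with \eqref{MO03}. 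Your explicit indicator-function construction of the non-decreasing minorant $\ell$ is just a more formal rendering of the paper's remark that ``for $x\ge2^{1/\alpha}\zeta(1/\alpha)$ we can take $\ell(x)=\zeta(1/\alpha)^{-\alpha}/2$,'' and all constants check out.
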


\subsection{Random number of observations}
\label{ssp}
In this subsection, we study extensions of our main results to the case where the number of observations is random and modelled by a Poisson distribution. This case corresponds to the practical situation in which the time, $t$, during which the observations are collected is fixed, but the number of observations is not. For this purpose, let $(n_t)_{t\ge 0}$ be a non-homogeneous Poisson process with intensity function $\lambda:\R_+\ra\R_+$, independent of the observations $(X_i)_{i\ge 1}$. Here, it is understood that $n_t$ stands for the number of observations collected by time $t$. Defining
$$\xi_{t}(a)=\sum_{i=1}^{n_t}\mathbf 1\{X_{i}=a\},$$
the occupancy counts and occupancy probabilities at time $t$ are, respectively, defined, for all $r\ge 0$, by
\begin{equation}
\nonumber
K_r(t)=\sum_{a\in\mathcal A} \mathbf 1 \{\xi_{t}(a)=r\}     \quad\mbox{and}\quad M_{r}(t)=\sum_{a\in\mathcal A} p_a\mathbf 1 \{\xi_{t}(a)=r\}.
\end{equation}
Then, provided 
$$
\Lambda_t=\int_0^{t}\lambda(u){\rm d}u\to+\infty,
$$
as $t\to\infty$, a slight modification of the proof of \eqref{asympM} reveals that, if $\nu(\e)=\e^{-\alpha}\ell(1/\e)$ with $\alpha\in(0,1)$ and $\ell\in{\rm rv}^0_{\infty}$, then, for all $r\ge 1$,
\begin{equation}
\label{asympMt}
\esp M_{r}(t) \sim \frac{\alpha \Gamma(1+r-\alpha)}{r!} \Lambda_t^{\alpha-1}\ell(\Lambda_t),
\end{equation}
as $t\to+\infty$. An analogous result for $K_{r}(t)$ also holds, but, for simplicity, we focus on $M_{r}(t)$. As in the case of a fixed number of observations, this result sets a benchmark for finite sample bounds. Based on the observation that
$$\esp M_{r}(t)=\frac{\Lambda^r_t}{r!}\sum_{a\in\mathcal A} p^{1+r}_{a}e^{-\Lambda_t p_a},$$
the proofs of Theorems \ref{tg}, \ref{tgplus}, and \ref{low} may be easily adapted to this case. Modifying Theorem \ref{tg}, for instance, shows that for any $t\ge 0$ and any $r\ge 0$, we have
\begin{equation}
\label{adaptt}
\esp M_{r}(t)\le\inf_{0\le\e\le1}\left\{\varphi^{\,+}_{r}(t,\e)+\psi^{\,+}_{r}(t,\e)\right\},
\end{equation}
where
\begin{eqnarray}
\varphi^{\,+}_{r}(t,\e)&=&\frac{\bar{c}(r)\nu(\e)}{\Lambda_t},
\nonumber\\
\psi^{\,+}_{r}(t,\e)&=&\frac{2^{1+r}\Lambda^r_t}{r!}\int_{0}^{\e}\nu\left(\frac u2\right)u^{r}e^{-\frac{\Lambda_tu}{2}}{\rm d}u,
\nonumber
\end{eqnarray}
and $\bar{c}(r)=(1+r)^{1+r}/(r!e^{1+r})$ for all integers $r\ge 0$. For the sake of brevity, we avoid explicitly stating the respective analogs of Theorems \ref{tgplus} and \ref{low} involving the functions $\kappa_+$ and $\kappa_-$. As with the results of Section \ref{sg}, these bounds lead to optimal-rate upper and lower bounds in terms of $t$. For instance, following the lines of the proof of Corollary \ref{ctg}, and under the same assumptions on the counting function, considering $\e=\Lambda_t^{-1}$ in \eqref{adaptt} yields
\begin{equation}\label{boundpoiss}
\esp M_{r}(t)\le  \left[ \bar{c}(r) + \frac{4^{1+r}\gamma(1+r-\alpha,\tfrac12)}{r!}\right] \Lambda_t^{\alpha-1}\ell(\Lambda_t),
\end{equation}
for any $r\ge 0$ and any $t>0$ with $\Lambda_t^{-1}\le1$. One may easily deduce bounds in the spirit of Corollaries \ref{ctg2} and \ref{ctg3} under related assumptions on $\nu$.

\subsection{Arbitrary distributions in a metric space}
\label{ssap}
So far in this article, the distribution, $P$, of our observations has been supported on an arbitrary and at most countable alphabet $\mathcal A$. In this subsection we briefly investigate a generalization of the notion of occupancy probabilities to the context of an arbitrary distribution, $P$, on a metric space $E$. \\

Let $(E,d)$ be a metric space and let $P$ be any probability distribution on $E$ equipped with its Borel $\sigma$-field. Suppose that we are given independent and identically distributed $E$-valued random variables $X_1,\dots,X_n$ with common distribution $P$. Since $P$ may not be discrete, a natural analog of the occupancy probabilities $M_{n,r}$ may be defined as follows. First, for $\delta>0$ and $x\in E$, we let
\begin{equation}
\label{xidelta}
\xi^{(\delta)}_{n}(x)=\sum_{i=1}^{n}\mathbf 1\left\{x\in B_{X_i,\delta}\right\},
\end{equation}
where, for $u\in E$, $B_{u,\delta}=\{x\in E: d(x,u)<\delta\}$. In other words, $\xi^{(\delta)}_{n}(x)$ is the numbers of sample points from which $x$ is at a distance strictly less than $\delta$. Now, let $X$ be an $E$-valued random variable independent of the sample and having distribution $P$. For any integer $0\le r\le n$, we set
\begin{equation}
\label{Mdelta}
M^{(\delta)}_{n,r}=\prob\left(\xi^{(\delta)}_n(X)=r\,|\,X_1,\dots,X_n\right)=\int_E\mathbf 1\left\{\xi^{(\delta)}_{n}(x)=r\right\}P({\rm d}x).
\end{equation}
The random variable $M^{(\delta)}_{n,r}$ represents the (conditional) probability that, given the first $n$ observations, the next one will fall into the $\delta$-neighbourhood of exactly $r$ of them. A similar extension of the missing mass was studied in Section 4 of \cite{BK12}. In our context, a slight generalisation of Theorem 8 in \cite{BK12} can be written as follows. For $A\subset E$, we denote $N(A,\delta)$ the $\delta$-covering number of $A$, i.e. the minimal number of balls $B_{u,\delta}$ needed to cover $A$. 
\begin{theo}
\label{bkgen}
For all $x\in E$ and $t>0$, let $\tau_x(t)=1-P(B_{x,t})$ and $N_x(t,\varrho)=N(B_{x,t},\varrho)$. Then, for all $n\ge 1$,
$$\esp M^{(\delta)}_{n,0}\le \inf_{x,t,\varrho}\left\{\tau_{x}(t)+\frac{N_{x}(t, \varrho)}{ne}\right\},$$
where the infimum is taken over all $x\in E$, all $ t>0$ and all $0<\varrho\le \delta/2$.
\end{theo}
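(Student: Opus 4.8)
The plan is to reduce the continuous problem to the discrete occupancy setting by constructing, for any fixed choice of $x\in E$, $t>0$ and $0<\varrho\le\delta/2$, an auxiliary finite alphabet and applying the finite-support upper bound from Corollary \ref{ctgfinite} (with $r=0$), exactly in the spirit of the $F(\e)$-vs-covering-number split in \eqref{od10} and in \citet{BK12}. First I would fix $x$, $t$, $\varrho$ and split the mass of $P$ according to the ball $B_{x,t}$: the portion of $P$ lying outside $B_{x,t}$ contributes at most $\tau_x(t)=1-P(B_{x,t})$ to $\esp M^{(\delta)}_{n,0}$, because if the ``new'' point $X$ falls outside $B_{x,t}$ we can simply bound $\mathbf 1\{\xi^{(\delta)}_n(X)=0\}\le 1$. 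So the real work is to bound the contribution of the event $\{X\in B_{x,t}\}$ by $N_x(t,\varrho)/(ne)$.

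The key step is the covering argument. Cover $B_{x,t}$ by $N_x(t,\varrho)$ balls $B_{u_1,\varrho},\dots,B_{u_N,\varrho}$ with $N=N_x(t,\varrho)$ and centers $u_j\in E$; then on the event $\{X\in B_{x,t}\}$ there is some $j$ with $X\in B_{u_j,\varrho}$. The crucial geometric observation is that whenever $X\in B_{u_j,\varrho}$ and some sample point $X_i$ also lies in $B_{u_j,\varrho}$, the triangle inequality gives $d(X,X_i)<2\varrho\le\delta$, i.e. $X\in B_{X_i,\delta}$, so $\xi^{(\delta)}_n(X)\ge 1$ and the indicator $\mathbf 1\{\xi^{(\delta)}_n(X)=0\}$ vanishes. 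Consequently, $\mathbf 1\{\xi^{(\delta)}_n(X)=0,\,X\in B_{u_j,\varrho}\}$ can only be nonzero if none of $X_1,\dots,X_n$ lies in $B_{u_j,\varrho}$. Writing $q_j=P(B_{u_j,\varrho})$ and conditioning on $X$ (or integrating over $X$), this gives
\begin{equation}
\nonumber
\int_{B_{x,t}}\mathbf 1\{\xi^{(\delta)}_n(x)=0\}\,P({\rm d}x)\le\sum_{j=1}^{N}\prob\big(X_1\notin B_{u_j,\varrho},\dots,X_n\notin B_{u_j,\varrho}\big)\,q_j=\sum_{j=1}^{N}q_j(1-q_j)^n,
\end{equation}
where I have used that the events for $X\in B_{u_j,\varrho}$ (after taking expectations) reduce to the product $(1-q_j)^n$ by independence and identical distribution. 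Finally, each term $q_j(1-q_j)^n$ is at most $\sup_{0\le q\le 1}q(1-q)^n\le 1/(ne)$ (the elementary bound underlying the $c(0)=e^{-1}$ constant in \eqref{cr}), so the sum is bounded by $N/(ne)=N_x(t,\varrho)/(ne)$. Adding the two contributions and taking the infimum over all admissible $x,t,\varrho$ yields the claimed inequality.

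The main obstacle I anticipate is purely bookkeeping rather than conceptual: one must be careful that the covering balls $B_{u_j,\varrho}$ need not be disjoint, so the inequality $\sum_j q_j(1-q_j)^n$ overcounts mass, but this is harmless since we only want an upper bound and each term is individually controlled. A secondary subtlety is measurability — $\xi^{(\delta)}_n(x)$ as a function of $x$ and the set $\{x:\xi^{(\delta)}_n(x)=0\}$ must be Borel so that \eqref{Mdelta} makes sense and Fubini/Tonelli applies when exchanging $\esp$ over the sample with the integral over $x$; this is standard given that $B_{u,\delta}$ is open, and one can note it in passing. The only genuine input beyond \citet{BK12}'s Theorem 8 is allowing an arbitrary center $x$ and radius $t$ for the outer ball (rather than covering the whole space), which is exactly what produces the $\tau_x(t)$ term and makes the bound nontrivial when $E$ is unbounded; the rest of the argument is identical in structure to the finite-support case $r=0$ of Corollary \ref{ctgfinite}.
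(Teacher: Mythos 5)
Your proposal is correct and follows essentially the same route as the paper's proof: split off $\tau_x(t)$ by bounding the indicator by $1$ outside $B_{x,t}$, cover $B_{x,t}$ by $N_x(t,\varrho)$ balls of radius $\varrho\le\delta/2$, use the triangle inequality ($2\varrho\le\delta$) to reduce each piece to $q_j(1-q_j)^n$, and finish with $\sup_{0\le q\le1}q(1-q)^n\le 1/(ne)$. The only cosmetic difference is that the paper first integrates out the sample to write $\esp M^{(\delta)}_{n,0}=\int_E(1-P(B_{u,\delta}))^nP({\rm d}u)$ and then compares $P(B_{u,\delta})\ge P(B_i)$ for $u\in B_i$, whereas you argue directly on the joint event; the two computations are equivalent.
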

This result involves, in an interesting way, the geometry of the support of $P$. Suppose, for instance, that the support $\mathcal S$ of $P$ is totally bounded. Then, as noted by \cite{BK12}, taking $x$ in $\mathcal S$, $t$ larger than the diameter of $\mathcal S$ and $\varrho=\delta/2$, leads to
$$\esp M^{(\delta)}_{n,0}\le\frac{N_x(t,\delta/2)}{ne},$$
which is a natural analog of their result in the discrete case.\\ 

In the sequel, we develop an alternative approach. First, we introduce an analog of the counting measure $\bs\nu$. For all $\delta>0$, let $\bs\nu_{\delta}$ be the measure on $[0,1]$ defined by
\begin{equation}
\label{measurenudelta}
\int_{0}^{1}f(u)\bs\nu_{\delta}({\rm d}u)=\int_{E}\frac{f(P(B_{x,\delta}))}{P(B_{x,\delta})}P({\rm d}x),
\end{equation}
for all measurable $f:[0,1]\to\R_{+}$. Then, denoting $\mathcal L_{\delta}(\e)=\{x\in E:P(B_{x,\delta})\ge\e\}$, we introduce the function $\nu_{\delta}$ defined, for all $\e\in[0,1]$, by
\begin{equation}
\label{nudelta}\nu_{\delta}(\e)=\bs\nu_{\delta}([\e,1])=\int_{\mathcal L_{\delta}(\e)}P(B_{x,\delta})^{-1}P({\rm d}x).
\end{equation}
The function $\nu_{\delta}$ is a natural analog of the counting function $\nu$ defined for discrete probability measures. Indeed, an easy application of the Dominated Convergence Theorem shows that, if $P$ is discrete and if for some $c>0$ the distance between any two points in its support is lower bounded by $c$, then for any $\e\in(0,1]$
\begin{equation}
\label{convnudelta}
\lim_{\delta\to 0}\nu_{\delta}(\e)=\nu(\e).
\end{equation}
The next result is in the spirit of \eqref{asympM}. Using the notation introduced in Subsection \ref{ssp}, we denote 
$$M^{(\delta)}_{r}(t)=M^{(\delta)}_{n_t,r},$$
where $(n_t)$ stands for a non-homogeneous Poisson process with intensity function $\lambda:\R_+\to\R_+$.

\begin{theo}
\label{ththething}
Fix $\delta>0$. Suppose that for some $\alpha\in[0,1]$ and some $\ell\in{\rm rv}^0_{\infty}$, possibly depending on $\delta$, we have $\nu_{\delta}(\e)=\e^{-\alpha}\ell(1/\e)$. Then, for all $r\ge0$,
\begin{equation}
\label{thething}
\esp M^{(\delta)}_{r}(t)\sim \frac{\alpha \Gamma(1+r-\alpha)}{r!} \Lambda^{\alpha-1}_t\ell(\Lambda_t),
\end{equation}
as $t\to+\infty$, provided $\Lambda_t\to+\infty$ as $t\to+\infty$. 
\end{theo}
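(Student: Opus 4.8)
The plan is to reduce \eqref{thething} to a Poissonized version of the classical Karlin--type asymptotic \eqref{asympM}, using the key identity for the expected occupancy probability under a Poisson number of observations. First I would record the analog of the representation already used in Subsection \ref{ssp}: conditioning on $n_t$ being Poisson with mean $\Lambda_t$, and using the independence of $(n_t)$ from $(X_i)$, one finds
\begin{equation}
\label{Mdeltarep}
\esp M^{(\delta)}_{r}(t)=\frac{\Lambda_t^{r}}{r!}\int_E P(B_{x,\delta})^{r}\,e^{-\Lambda_t P(B_{x,\delta})}\,P({\rm d}x).
\end{equation}
This is obtained exactly as for $\esp M_{r}(t)$ in the discrete case: the event $\{\xi^{(\delta)}_{n_t}(x)=r\}$ has conditional probability $\binom{n_t}{r}P(B_{x,\delta})^{r}(1-P(B_{x,\delta}))^{n_t-r}$ given $n_t$, and Poissonization of $n_t$ collapses the binomial into the displayed Poisson form. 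Rewriting the right-hand side via the measure $\bs\nu_{\delta}$ defined in \eqref{measurenudelta} — note $P(B_{x,\delta})^{r}e^{-\Lambda_t P(B_{x,\delta})}=P(B_{x,\delta})\cdot g(P(B_{x,\delta}))$ with $g(u)=u^{r-1}e^{-\Lambda_t u}$ (or, for $r=0$, absorb the extra factor $P(B_{x,\delta})^{-1}$ directly, which is licit since $g$ is integrable against $\bs\nu_\delta$ by the finiteness of $\nu_\delta$ on intervals away from $0$) — yields
\begin{equation}
\label{Mnudelta}
\esp M^{(\delta)}_{r}(t)=\frac{\Lambda_t^{r}}{r!}\int_{0}^{1}u^{r}\,e^{-\Lambda_t u}\,\bs\nu_{\delta}({\rm d}u).
\end{equation}

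With \eqref{Mnudelta} in hand, the statement becomes a purely analytic claim about the Laplace-type transform of the measure $\bs\nu_{\delta}$, whose distribution function $\nu_{\delta}(\e)=\e^{-\alpha}\ell(1/\e)$ is, by hypothesis, regularly varying at $0$ with exponent $\alpha$. The next step is to integrate by parts in \eqref{Mnudelta} to express the integral in terms of $\nu_{\delta}$ itself — $\int_0^1 u^r e^{-\Lambda_t u}\bs\nu_\delta({\rm d}u) = \int_0^1 \nu_\delta(u)\,{\rm d}\bigl(-u^r e^{-\Lambda_t u}\bigr)$ plus boundary terms, the boundary term at $0$ vanishing because $u^r\nu_\delta(u)=u^{r-\alpha}\ell(1/u)\to 0$ when $r\ge 1$, and requiring the $r=0$ case to be handled by first noting $\esp M^{(\delta)}_0(t)=\int_E e^{-\Lambda_t P(B_{x,\delta})}P({\rm d}x)=\int_0^1 e^{-\Lambda_t u}u\,\bs\nu_\delta({\rm d}u)$ directly. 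Then I would substitute $u=v/\Lambda_t$ and invoke the Tauberian/Abelian machinery for regularly varying functions (Karamata's theorem, or equivalently a direct dominated-convergence argument after the rescaling, using the uniform convergence theorem for slowly varying functions on compact sets and a standard Potter-bound domination near $0$ and $\infty$) to extract the limit
$$\Lambda_t^{1-\alpha}\ell(\Lambda_t)^{-1}\,\esp M^{(\delta)}_{r}(t)\;\longrightarrow\;\frac{1}{r!}\int_0^\infty v^{r-\alpha}e^{-v}\,{\rm d}\bigl(\text{something}\bigr)\;=\;\frac{\alpha\,\Gamma(1+r-\alpha)}{r!},$$
the final evaluation of the constant being the same Gamma-integral computation that produces the constant in \eqref{asympM}. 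In fact, the cleanest route is to observe that \eqref{Mnudelta} is formally \emph{identical} to the expression $\esp M_r(t)=\frac{\Lambda_t^r}{r!}\sum_{a}p_a^{1+r}e^{-\Lambda_t p_a}=\frac{\Lambda_t^r}{r!}\int_0^1 u^r e^{-\Lambda_t u}\bs\nu({\rm d}u)$ appearing in Subsection \ref{ssp}, with $\bs\nu$ replaced by $\bs\nu_\delta$; since \eqref{asympMt} is asserted there to follow from "a slight modification of the proof of \eqref{asympM}", the very same modification — which only uses that the counting function is $\e^{-\alpha}\ell(1/\e)$ — applies verbatim to $\bs\nu_\delta$, and Theorem \ref{ththething} follows.

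The main obstacle is not the asymptotic analysis, which is classical once \eqref{Mnudelta} is available, but rather the justification of \eqref{Mdeltarep}--\eqref{Mnudelta} itself: one must check that $\bs\nu_\delta$ as defined through \eqref{measurenudelta} is a genuine (locally finite on $(0,1]$, possibly infinite near $0$) Borel measure for which the change of variables pushing $P({\rm d}x)$ through the map $x\mapsto P(B_{x,\delta})$ is valid, and that all the integrals are finite — in particular that $\nu_\delta(\e)<\infty$ for $\e>0$, which is implicit in writing $\nu_\delta(\e)=\e^{-\alpha}\ell(1/\e)$ but should be confirmed from \eqref{nudelta} (it follows because on $\mathcal L_\delta(\e)$ one has $P(B_{x,\delta})^{-1}\le \e^{-1}$, so $\nu_\delta(\e)\le \e^{-1}P(\mathcal L_\delta(\e))\le \e^{-1}$). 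A second, more technical point is measurability and regularity of $x\mapsto P(B_{x,\delta})$ (lower semicontinuity of $x\mapsto P(B_{x,\delta})$ in a metric space is standard since $B_{x,\delta}$ is open), which is needed to make sense of all the integrals over $E$. Once these measure-theoretic preliminaries are dispatched, the remainder is a direct transcription of the Poissonized Karlin argument already invoked for \eqref{asympMt}.
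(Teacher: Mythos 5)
Your proposal follows essentially the same route as the paper: Poissonization gives $\esp M^{(\delta)}_r(t)=\frac{\Lambda_t^r}{r!}\int_0^1 u^{1+r}e^{-\Lambda_t u}\,\bs\nu_\delta({\rm d}u)$, integration by parts (Proposition \ref{ipp}) together with Karamata's theorem identifies the regular variation of the measure $u^{1+r}\bs\nu_\delta({\rm d}u)$ near $0$, and Feller's Tauberian theorem for Laplace transforms then yields \eqref{thething}. One bookkeeping correction: since \eqref{measurenudelta} already divides by $P(B_{x,\delta})$, the test function to insert is $f(u)=u^{1+r}e^{-\Lambda_t u}$ rather than $u^{r}e^{-\Lambda_t u}$, and with this exponent the boundary term $u^{1+r}\nu_\delta(u)=u^{1+r-\alpha}\ell(1/u)$ vanishes at $0$ for every $r\ge 0$, so the separate treatment of $r=0$ you propose is unnecessary.
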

To keep the proof simple, we present this result in the context where the number of observations, $n$, follows a non-homogeneous Poisson process. However, this proof can be modified to give an analogous result in the case where $n$ is fixed. Finally, denoting  
$$\kappa^{(\delta)}_-(\e)=\sup_{0<u\le\e}\frac{\nu_{\delta}(u)}{\nu_{\delta}(u/2)}\qquad\mbox{and}\qquad \kappa^{(\delta)}_+(\e)=\sup_{0<u\le\e}\frac{\nu_{\delta}(u/2)}{\nu_{\delta}(u)},$$
for $0<\e\le 1$, the reader may easily check that, in the context of this subsection, Theorems \ref{tg}, \ref{tgplus} and \ref{low} hold exactly provided $\nu$, $\kappa_+$, $\kappa_-$ and $p_{\star}$ are replaced, respectively, by 
$$\nu_{\delta},\quad\kappa^{(\delta)}_+,\quad\kappa^{(\delta)}_-\quad\mbox{and}\quad p^{(\delta)}_{\star}=\sup\{P(B_{x,\delta}):x\in E\}.
$$ 

The results of this subsection could find interesting applications in the context of a continuous time stochastic process $X=(X_t)_{0\le t\le T}$, considering $P$ to be the distribution of the whole path $X=(X_t)_{0\le t\le T}$ or of $X_t$ for some $0\le t\le T$. In order to have relevant information on the generalized counting function $\nu_{\delta}$, one needs explicit upper and lower bounds on the probabilities of balls $P(B_{x,\delta}),\, x\in E$. Results in this direction have been widely studied and may be related to large deviations theory and density estimates for stochastic partial differential equations. Finally, an interesting question is whether the work of \citet{BBO15} on concentration inequalities can be adapted to this general case. This is left for future research.

%-----------------------------------------
%-----------------------------------------

\section{Proofs}
\label{proofs}

\begin{proof}[\bf Proof of Theorem \ref{tg}]
For all $n\ge1$ and all $0\le r\le n$,
\begin{equation}
\label{tg:eq1}
\esp  M_{n,r}=\sum_{a\in\mathcal A}p_a\prob\left(\xi_n(a)=r\right).
\end{equation}
For any $a\in\mathcal A$, the variables $\mathbf 1\{X_i=a\}$, $i=1,\dots,n$, are independent and have the same Bernoulli distribution with parameter $p_a$. This implies that  
\begin{equation}
\label{tg:eq2}
\prob\left(\xi_n(a)=r\right)=\binom{n}{r}p^{r}_a(1-p_a)^{n-r}.
\end{equation}
As a result, we deduce from \eqref{tg:eq1} and \eqref{tg:eq2} that, for all $n\ge1$ and all $0\le r\le n$,
\begin{equation}
\label{tg:eq3}
\esp M_{n,r}=\binom{n}{r}\sum_{a\in\mathcal A}p^{r+1}_a(1-p_a)^{n-r}.
\end{equation}
Now, suppose that $0\le\e\le1$, $n\ge1$ and $0\le r\le n-1$ are fixed. Note that, from \eqref{tg:eq3}, we can write
\begin{eqnarray}
\esp M_{n,r}&=&\binom{n}{r}\sum_{a\in\mathcal A}p^{r+1}_{a}(1-p_{a})^{n-r}
\nonumber\\
&=&\binom{n}{r}\sum_{a:p_{a}\ge\e}p^{r+1}_{a}(1-p_{a})^{n-r}+\binom{n}{r}\sum_{a:p_{a}<\e}p^{r+1}_{a}(1-p_{a})^{n-r}
\nonumber\\
&=:&\binom{n}{r}(S_1+S_2).
\label{tg:eq4}
\end{eqnarray}
To bound the first term observe that from the definition of the counting function $\nu$ introduced in \eqref{nu} we obtain
\begin{eqnarray}
\binom{n}{r}S_1&\le&\binom{n}{r}\nu(\e)\sup_{u\in[0,1]}u^{r+1}(1-u)^{n-r}
\nonumber\\
&=&\binom{n}{r}\nu(\e)\frac{(1+r)^{1+r}(n-r)^{n-r}}{(1+n)^{1+n}}.
\label{tg:eq5}
\end{eqnarray}
In the case where $r=0$, the upper bound \eqref{tg:eq5} becomes
\begin{eqnarray}
\binom{n}{r}S_1&\le&\nu(\e)\frac{n^n}{(1+n)^{1+n}}
\nonumber\\
&=&\frac{\nu(\e)}{n}\left(1-\frac{1}{1+n}\right)^{1+n}
\nonumber\\
&\le&\frac{\nu(\e)}{ne},
\label{tg:eq5b}
\end{eqnarray}
where, in \eqref{tg:eq5b}, we have used the fact that $\forall u\in[0,1]:\ (1-u)\le e^{-u}$. In the case $1\le r\le n-1$, developing the binomial coefficient in \eqref{tg:eq5}, we need to evaluate the term
\begin{equation}
\label{tg:eq6}
\frac{n!}{r!(n-r)!}\frac{(1+r)^{1+r}(n-r)^{n-r}}{(1+n)^{1+n}}\,.
\end{equation}
Using the Stirling type bound \citep[see][]{Ro55} 
$$
\sqrt{2\pi}\,n^{n+\frac12} e^{-n+\frac{1}{12n+1}}<n!<\sqrt{2\pi}\,n^{n+\frac12} e^{-n+\frac{1}{12n}},
$$
valid for all $n\ge 1$, we deduce in particular that for all $1\le r\le n-1$, we have the following inequalities 
\begin{align}
n! &< \sqrt{2\pi}\,n^{n+\frac12} e^{-n+\frac{1}{12n}}
\nonumber\\
&\le \sqrt{2\pi}\,n^{n+\frac12} e^{-n+\frac{1}{12(1+r)}}
\nonumber\\
&< \sqrt{2\pi}\,n^{n+\frac12} e^{-n+\frac{1}{12r+1}},
\label{fact1}\\
r! &> \sqrt{2\pi}\,r^{r+\frac12} e^{-r+\frac{1}{12r+1}},
\label{fact2}\\
(n-r)! &> \sqrt{2\pi}\,(n-r)^{n-r+\frac12} e^{-n+r}.
\label{fact3}
\end{align}
Using inequalities \eqref{fact1}, \eqref{fact2} and \eqref{fact3}, we obtain
\[
\frac{n!}{r!(n-r)!} \leq \frac{1}{\sqrt{2\pi}}\frac{  n^{n+\frac12}  }{ r^{r+\frac12} (n-r)^{n-r+\frac12}   },
\]
implying that the expression in \eqref{tg:eq6} can be upper bounded by
\begin{equation}\label{stir1}
\frac{1}{\sqrt{2\pi}} \frac{(1+r)^{1+r}}{r^{r+\frac12}}\frac{n^{n+\frac12}}{ (1+n)^{1+n}  }\frac{1}{(n-r)^{\frac12}}\,.
\end{equation}
Using the inequality $1/(n-r)\le (1+r)/n$, valid for all $0\le r\le n-1$, we obtain
\begin{equation}\label{stir2}
\frac{n^{n+\frac12}}{ (1+n)^{1+n}  }\frac{1}{(n-r)^{\frac12}} \leq (1+r)^{\frac12}\left( \frac{n}{1+n}  \right)^{1+n}\frac1n \leq \frac{(1+r)^{\frac12}}{n}\,.
\end{equation}
Combining \eqref{stir1} and \eqref{stir2}, the term \eqref{tg:eq6} is therefore upper bounded by
\begin{align}
\frac{1}{n\sqrt{2\pi}}\left(1+ \frac{1}{r} \right)^{r+\frac12} (1+r) &\le \frac{1}{n\sqrt{\pi}}\left(1+ \frac{1}{r} \right)^{r} (1+r) 
\nonumber\\
&\le \frac{e(1+r)}{n\sqrt{\pi}}, 
\label{stir3}
\end{align}
where we have used that $(1+1/r)^r\leq e$ for all $r\geq 1$. Combining \eqref{tg:eq5} and \eqref{stir3} brings finally
\begin{equation}
\label{tg:eq9}
\binom{n}{r}S_1\le \frac{e(1+r)}{\sqrt{\pi}}\frac{\nu(\e)}{n},
\end{equation}
for all $1\le r\le n-1$. Combining \eqref{tg:eq5b} and \eqref{tg:eq9}, we have therefore established that 
\begin{equation}
%\label{tg:eq9}
\binom{n}{r}S_1\le\frac{c(r)\nu(\e)}{n},
\end{equation}
for all $0\le r\le n-1$ where $c(r)$ is as in \eqref{cr}. We now focus on bounding the second term in \eqref{tg:eq4}. Toward this end we choose $b>1$ and write 
\begin{align}
S_2&=\sum_{j=0}^{+\infty}\sum_{a:p_a<\e}\mathbf 1\left\{\frac{\e}{b^{j+1}}\le p_{a}<\frac{\e}{b^{j}}\right\}p^{r+1}_{a}(1-p_{a})^{n-r}
\label{tg:eqcut}\\
&\le\sum_{j=0}^{+\infty}\left[\nu\left(\frac{\e}{b^{j+1}}\right)-\nu\left(\frac{\e}{b^j}\right)\right]\left(\frac{\e}{b^{j}}\right)^{r+1}\left(1-\frac{\e}{b^{j+1}}\right)^{n-r}
\nonumber\\
&\le\sum_{j=0}^{+\infty}\nu\left(\frac{\e}{b^{j+1}}\right)\left(\frac{\e}{b^{j}}\right)^{r+1}\left(1-\frac{\e}{b^{j+1}}\right)^{n-r}
\nonumber\\
&=\frac{b}{b-1}\sum_{j=0}^{+\infty}\left(\frac{\e}{b^{j}}-\frac{\e}{b^{j+1}}\right)\nu\left(\frac{\e}{b^{j+1}}\right)\left(\frac{\e}{b^{j}}\right)^{r}\left(1-\frac{\e}{b^{j+1}}\right)^{n-r}
\nonumber\\
&\le\frac{b^{1+r}}{b-1}\sum_{j=0}^{+\infty}\int_{\frac{\e}{b^{j+1}}}^{\frac{\e}{b^{j}}}\nu\left(\frac ub\right)u^{r}\left(1-\frac{u}{b}\right)^{n-r}{\rm d}u
\label{tg:eq10}\\
&=\frac{b^{1+r}}{b-1}\int_{0}^{\e}\nu\left(\frac ub\right)u^{r}\left(1-\frac{u}{b}\right)^{n-r}{\rm d}u.
\label{tg:eq11}
\end{align}
For inequality \eqref{tg:eq10} we have used the fact that the functions $u\mapsto u^r$ and $u\mapsto\nu(u)(1-u)^{n-r}$ are respectively non-decreasing and non-increasing so that, for all $u\in[\e b^{-j-1},\e b^{-j}]$, we have
$$\nu\left(\frac{\e}{b^{j+1}}\right)\left(\frac{\e}{b^{j}}\right)^{r}\left(1-\frac{\e}{b^{j+1}}\right)^{n-r}\le b^r\nu\left(\frac ub\right)u^{r}\left(1-\frac{u}{b}\right)^{n-r}.$$
Hence, equation \eqref{tg:eq11} implies that
$$\binom{n}{r}S_2\le\frac{b^{1+r}}{b-1}\binom{n}{r}\int_{0}^{\e}\nu\left(\frac ub\right)u^{r}\left(1-\frac{u}{b}\right)^{n-r}{\rm d}u,$$
which, along which equation \eqref{tg:eq9} and the choice of $b=2$, proves the first claim in Theorem \ref{tg}. We next turn to the inequality \eqref{requaln}. Again, suppose that $\e\in[0,1]$ is fixed and note that, for $r=n$, \eqref{tg:eq4} becomes
\begin{equation}
\esp M_{n,n}=\sum_{a:p_{a}\ge\e}p^{n+1}_{a}+\sum_{k:p_{a}<\e}p^{n+1}_{a}.
\label{tge14}
\end{equation}
Bounding each $p_a$ by $p_{\star}$ in the first sum and by $\e$ in the second, we obtain
\begin{equation}
\esp M_{n,n}\le p^{n+1}_{\star}\nu(\e)+\e^n\sum_{a:p_{a}\le\e}p_{a}\le p^{n+1}_{\star}\nu(\e)+\e^n,
\label{tge15}
\end{equation}
which completes the proof. 
\end{proof}

\begin{proof}[\bf Proof of Corollary \ref{ctg}]
Let $n\ge 1$ and $0\le r\le n-1$ be fixed. Theorem \ref{tg} implies, in particular, that $\esp M_{n,r}\le\varphi^{\,+}_{n,r}(1/n)+\psi^{\,+}_{n,r}(1/n)$. Given the assumption on the counting function, we have
\begin{equation}
\label{ctg:eq1}
\varphi^{\,+}_{n,r}(1/n) \le \frac{c(r)\ell(n)}{n^{1-\alpha}}.
\end{equation}
To bound the second term, note that
\begin{equation}
\nonumber
\psi^{\,+}_{n,r}(1/n)=2^{1+r}{n\choose r}I_n\quad\mbox{where}\quad I_n=\int_{0}^{\frac{1}{n}}\nu\left(\frac u2\right)u^r\left(1-\frac u2\right)^{n-r}{\rm d}u.
\end{equation}
Since $\nu(\e)\le\e^{-\alpha}\ell(1/\e)$, we deduce that  
\begin{eqnarray}
I_n&\le& 2^{\alpha}\int_{0}^{\frac{1}{n}}\ell\left(\frac 2 u\right)u^{r-\alpha}\left(1-\frac u2\right)^{n-r}{\rm d}u
\nonumber\\
&=& 2^{1+r}\int_0^{\frac{1}{2n}} \ell\left(\frac 1 u\right)u^{r-\alpha}(1-u)^{n-r}{\rm d}u
\label{ctg:eqi}\\
&\le& 2^{1+r}\ell(n)\int_0^{\frac{1}{2n}} u^{r-\alpha}(1-u)^{n-r}{\rm d}u, 
\label{ctg:eqi2}
\end{eqnarray}
where \eqref{ctg:eqi} follows from a change of variables and \eqref{ctg:eqi2} uses the fact that $\ell$ is non-increasing. Then, since $(1-u)\le e^{-u}$ for $0\le u\le 1$, we have
\begin{eqnarray}
I_n&\le& 2^{1+r}\ell(n)\int_0^{\frac{1}{2n}} u^{r-\alpha}e^{-(n-r)u}{\rm d}u 
\nonumber\\
&=& \frac{2^{1+r}\ell(n)}{(n-r)^{1+r-\alpha}}\int_0^{\frac{n-r}{2n}} u^{r-\alpha}e^{-u}{\rm d}u 
\nonumber\\
&\le& \frac{2^{1+r}\ell(n)}{(n-r)^{1+r-\alpha}}\int_0^{\frac{1}{2}} u^{r-\alpha}e^{-u}{\rm d}u 
\nonumber\\
&\le& \frac{2^{1+r}\ell(n)(1+r)^{1+r-\alpha}}{n^{1+r-\alpha}}\int_0^{\frac{1}{2}} u^{r-\alpha}e^{-u}{\rm d}u 
\label{ctg:eq2}\\
&=& \frac{2^{1+r}\ell(n)(1+r)^{1+r-\alpha}}{n^{1+r-\alpha}}\gamma(1+r-\alpha,\tfrac 1 2),
\nonumber
\end{eqnarray}
where, in \eqref{ctg:eq2}, we used the fact that $1/(n-r)\le (1+r)/n$ for $r\le n-1$. Finally, using the fact that ${n\choose r}\le n^r/r!$, we obtain
\begin{equation}
\label{ctg:eq3}
\psi^{\,+}_{n,r}(1/n)\le \frac{4^{1+r}}{r!}(1+r)^{1+r-\alpha}\gamma(1+r-\alpha,\tfrac 1 2) \frac{\ell(n)}{n^{1-\alpha}}. 
\end{equation}
Combining \eqref{ctg:eq1} and \eqref{ctg:eq3} gives the result. 
\end{proof}

\begin{proof}[\bf Proof of Corollary \ref{ctg2}] The proof of Corollary \ref{ctg2} follows along the same lines as the proof of Corollary \ref{ctg} up to \eqref{ctg:eqi}. Then, applying Cauchy-Schwarz's inequality, we obtain for all $\beta\in(0,1)$ such that $\beta>2(\alpha-r)-1$,
\begin{eqnarray}
I_n&\le&2^{1+r}\int_0^{\frac{1}{2n}} \ell\left(\frac 1 u\right)u^{r-\alpha}(1-u)^{n-r}{\rm d}u
\nonumber\\
&=&2^{1+r}\int_0^{\frac{1}{2n}} u^{-\frac{\beta}{2}}\ell\left(\frac 1 u\right)u^{r-\alpha+\frac{\beta}{2}}(1-u)^{n-r}{\rm d}u
\nonumber\\
&\le&2^{1+r}\sqrt{\int_0^{\frac{1}{2n}} u^{-\beta}\ell\left(\frac 1 u\right)^2{\rm d}u}\sqrt{\int_0^{\frac{1}{2n}}u^{2(r-\alpha)+\beta}(1-u)^{2(n-r)}{\rm d}u}
\nonumber\\
&\le&2^{1+r}\ell^{\,\circ}_{\beta}(n)\sqrt{\int_0^{\frac{1}{2n}}u^{2(r-\alpha)+\beta}(1-u)^{2(n-r)}{\rm d}u},
\label{ctg2:eq1}
\end{eqnarray}
where \eqref{ctg2:eq1} follows from \eqref{lb} and a change of variables. Then, from similar arguments as in the proof of Corollary \ref{ctg}, we deduce
\begin{eqnarray}
\psi^{\,+}_{n,r}(1/n)&=&2^{1+r}{n\choose r}I_n
\nonumber\\
&\le&\frac{4^{1+r}}{r!}n^r\ell^{\,\circ}_{\beta}(n)\sqrt{\int_0^{\frac{1}{2n}}u^{2(r-\alpha)+\beta}(1-u)^{2(n-r)}{\rm d}u}
\nonumber\\
 &\le& \frac{4^{1+r}}{r!}n^r\ell^{\,\circ}_{\beta}(n)\sqrt{\int_0^{\frac{1}{2n}}u^{2(r-\alpha)+\beta}e^{-2u(n-r)}{\rm d}u}
\nonumber\\
&=&\frac{4^{1+r}}{r!}n^r\ell^{\,\circ}_{\beta}(n)\sqrt{\int_0^{1-\frac{r}{n}}u^{2(r-\alpha)+\beta}e^{-u}{\rm d}u} } \left(\frac{1}{2(n-r)}\right)^{r-\alpha +\frac{\beta+1}{2}
\nonumber\\
&\le&c_2(\alpha,\beta,r)\,n^{\alpha-\frac{1+\beta}{2}}\ell^{\,\circ}_{\beta}(n),
\nonumber
\end{eqnarray}
which completes the proof. 
\end{proof}

\begin{proof}[\bf Proof of Theorem \ref{tgplus}]
The proof of Theorem \ref{tgplus} follows the same lines as the proof of Theorem \ref{tg} up to \eqref{tg:eqcut}, where we take $b=2$. Then, we write
\begin{align}
S_2&=\sum_{j=0}^{+\infty}\sum_{a:p_a<\e}^{+\infty}\mathbf 1\left\{\frac{\e}{2^{j+1}}\le p_{a}<\frac{\e}{2^{j}}\right\}p^{r+1}_{a}(1-p_{a})^{n-r}
\nonumber\\
&\le\sum_{j=0}^{+\infty}\left[\nu\left(\frac{\e}{2^{j+1}}\right)-\nu\left(\frac{\e}{2^j}\right)\right]\left(\frac{\e}{2^{j}}\right)^{r+1}\left(1-\frac{\e}{2^{j+1}}\right)^{n-r}
\nonumber\\
&=\sum_{j=0}^{+\infty}\left[\frac{\nu\left(\frac{\e}{2^{j+1}}\right)}{\nu\left(\frac{\e}{2^j}\right)}-1\right]\nu\left(\frac{\e}{2^j}\right)\left(\frac{\e}{2^{j}}\right)^{r+1}\left(1-\frac{\e}{2^{j+1}}\right)^{n-r}
\nonumber\\
&\le\sum_{j=0}^{+\infty}\left[\kappa_+\left(\frac{\e}{2^{j}}\right)-1\right]\nu\left(\frac{\e}{2^j}\right)\left(\frac{\e}{2^{j}}\right)^{r+1}\left(1-\frac{\e}{2^{j+1}}\right)^{n-r}
\nonumber\\
&=2\sum_{j=0}^{+\infty}\left(\frac{\e}{2^{j}}-\frac{\e}{2^{j+1}}\right)\left[\kappa_+\left(\frac{\e}{2^{j}}\right)-1\right]\nu\left(\frac{\e}{2^j}\right)\left(\frac{\e}{2^{j}}\right)^{r}\left(1-\frac{\e}{2^{j+1}}\right)^{n-r}
\nonumber\\
&\le2^{1+r}\sum_{j=0}^{+\infty}\int_{\frac{\e}{2^{j+1}}}^{\frac{\e}{2^{j}}}(\kappa_+(2u)-1)\nu(u)u^{r}\left(1-\frac{u}{2}\right)^{n-r}{\rm d}u
\label{tge11}\\
&=2^{1+r}\int_{0}^{\e}(\kappa_+(2u)-1)\nu(u)u^{r}\left(1-\frac{u}{2}\right)^{n-r}{\rm d}u,
\label{tge11b}
\end{align}
where, in \eqref{tge11}, we use the monotonicity of both $u\mapsto (\kappa_+(u)-1)u^r$ and $u\mapsto\nu(u)(1-u)^{n-r}$. This completes the proof. %$\square$
\end{proof}

\begin{proof}[\bf Proof of Corollary \ref{ctg3}] Using the same arguments as in the beginning of the proof of Corollary \ref{ctg}, but $\theta^+_{n,r}(1/n)$ in place of $\psi^+_{n,r}(1/n)$,
we obtain 
\begin{align}
\esp M_{n,r}&\le \frac{c(r)\ell(n)}{n^{1-\alpha}}+2^{1+r}{n\choose r}J_n,
\label{ctg3:e1}
\end{align}
where 
\begin{align}
J_n &= \int_0^{\frac{1}{n}} (\kappa_+(2u)-1)\nu(u)u^{r}\left(1-\frac u2\right)^{n-r}{\rm d}u.
\nonumber
\end{align}
Note that
\begin{align}
J_n &\le \int_0^{\frac{1}{n}} (\kappa_+(2u)-1)\ell\left(\frac 1 u\right)u^{r-\alpha}\left(1-\frac u2\right)^{n-r}{\rm d}u.
\nonumber
\end{align}
Given assumption \eqref{condkplus}, and the fact that $\kappa_+$ is non-decreasing, we deduce that 
\begin{align}
J_n &\le(\kappa_+(2/n)-1) \int_0^{\frac{1}{n}}\ell\left(\frac 1 u\right)u^{r-\alpha}\left(1-\frac u2\right)^{n-r}{\rm d}u
\nonumber\\
&\le2(\kappa^0_+-1) \int_0^{\frac{1}{n}}\ell\left(\frac 1 u\right)u^{r-\alpha}\left(1-\frac u2\right)^{n-r}{\rm d}u.
\label{ctg3:e2}
\end{align}
Proceeding now as in the proof of Corollary \ref{ctg2} and applying Cauchy-Schwarz's inequality in \eqref{ctg3:e2} leads, for all $\beta>2(\alpha-r)-1$, to
\begin{align}
J_n &\le 2(\kappa^0_+-1) \int_0^{\frac{1}{n}}u^{-\frac{\beta}{2}}\ell\left(\frac 1 u\right)u^{r-\alpha+\frac{\beta}{2}}\left(1-\frac u2\right)^{n-r}{\rm d}u
\nonumber\\
&\le2(\kappa^0_+-1) \sqrt{\int_0^{\frac{1}{n}}u^{-\beta}\ell\left(\frac 1 u\right)^2{\rm d}u}\sqrt{\int_0^{\frac{1}{n}}u^{2(r-\alpha)+\beta}\left(1-\frac u2\right)^{2(n-r)}{\rm d}u}
\nonumber\\
&=2(\kappa^0_+-1) \ell^{\circ}_\beta\left(\frac n2\right)\sqrt{\int_0^{\frac{1}{n}}u^{2(r-\alpha)+\beta}\left(1-\frac u2\right)^{2(n-r)}{\rm d}u},
\label{ctg3:e3}
\end{align}
where \eqref{ctg3:e3} follows from \eqref{lb} and a change of variables. Using, as in the proofs of Corollary \ref{ctg} and Corollary \ref{ctg2}, the fact that $(1-u)\le e^{-u}$ for $0\le u\le 1$ and the observation that $1/(n-r)\le (1+r)/n$ for $r\le n-1$, the reader may easily check that the square root term in \eqref{ctg3:e3} is upper bounded by
\begin{equation}
\left(\frac{1+r}{n}\right)^{\frac{1+\beta}{2}+r-\alpha}\sqrt{\gamma(1+\beta+2(r-\alpha),1)}.
\nonumber
\end{equation}
Finally, combining this last observation with \eqref{ctg3:e3}, and the fact that ${n\choose r}\le n^r/r!$, we deduce that the second term on the right hand-side of \eqref{ctg3:e1} is at most
\begin{align}
&2^{2+r}{n\choose r}(\kappa^0_+-1) \ell^{\circ}_\beta\left(\frac n2\right)\left(\frac{1+r}{n}\right)^{\frac{1+\beta}{2}+r-\alpha}\sqrt{\gamma(1+\beta+2(r-\alpha),1)}
\nonumber\\
& \le\frac{2^{2+r}}{r!}(\kappa^0_+-1) \ell^{\circ}_\beta\left(\frac n2\right)n^{\alpha-\frac{1+\beta}{2}}(1+r)^{\frac{1+\beta}{2}+r-\alpha}\sqrt{\gamma(1+\beta+2(r-\alpha),1)}
\nonumber\\
& = (\kappa_+^0-1)c_2(\alpha,\beta,r)\left(\frac n2\right)^{\alpha-\frac{1+\beta}{2}}\ell^{\circ}_\beta\left(\frac n2\right),
\nonumber
\end{align}
where $c_2(\alpha,\beta,r)$ is as in Corollary \ref{ctg2}. This completes the proof.
\end{proof}

\begin{proof}[\bf Proof of Theorem \ref{low}]
Fix $n\ge 1$, $0\le r\le n-1$ and $\e\in(0,1/2]$. As in the proof of Theorem \ref{tg}, we write 
\begin{eqnarray*}
\mathbb E M_{n,r} &=& {n\choose r}\sum_{a:p_a\ge\e} p_a^{r+1}(1-p_a)^{n-r} +{n\choose r}\sum_{a:p_a<\e} p_a^{r+1}(1-p_a)^{n-r} \\
&=:& {n\choose r}\left(S_1 + S_2\right).
\end{eqnarray*}
From the definition of $\nu$, it is clear that  
$$
S_1 \ge\nu(\e) \e^{r+1}(1-p_{\star})^{n-r}.
$$
To bound $S_2$, we write
\begin{align}
S_2 &= \sum_{j=0}^\infty \sum_{a:p_a<\e} \mathbf 1\left\{\frac{\e}{2^{j+1}}\le p_a<\frac{\e}{2^j}\right\}p_a^{r+1}(1-p_a)^{n-r}
\nonumber\\
&\ge \sum_{j=0}^\infty \left[\nu\left(\frac{\e}{2^{j+1}}\right)-\nu\left(\frac{\e}{2^j}\right)\right]\left(\frac{\e}{2^{j+1}}\right)^{r+1}\left(1-\frac{\e}{2^{j}}\right)^{n-r}
\nonumber\\
&= \sum_{j=0}^\infty\left(\frac{\e}{2^j}-\frac{\e}{2^{j+1}}\right)\left[1-\frac{\nu\left(\frac{\e}{2^{j}}\right)}{\nu\left(\frac{\e}{2^{j+1}}\right)}\right] \nu\left(\frac{\e}{2^{j+1}}\right)\left(\frac{\e}{2^{j+1}}\right)^{r}\left(1-\frac{\e}{2^{j}}\right)^{n-r}
\nonumber\\
&\ge \sum_{j=0}^\infty\left(\frac{\e}{2^j}-\frac{\e}{2^{j+1}}\right)\left[1-\kappa_-\left(\frac{\e}{2^{j}}\right)\right] \nu\left(\frac{\e}{2^{j+1}}\right)\left(\frac{\e}{2^{j+1}}\right)^{r}\left(1-\frac{\e}{2^{j}}\right)^{n-r}
\nonumber\\
&\ge 2^{-r}\sum_{j=0}^\infty \int_{\frac{\e}{2^{j+1}}}^{\frac{\e}{2^j}} (1-\kappa_-(2u))\nu(u)u^{r}\left(1-2u\right)^{n-r}
{\rm d}u
\label{lowertg:eq1}\\
&= 2^{-r} \int_{0}^{\e} (1-\kappa_-(2u))\nu(u)u^{r}\left(1-2u\right)^{n-r}
{\rm d}u
\nonumber
\end{align}
where in \eqref{lowertg:eq1}, we used the monotonicity of both $u \mapsto u^{r}$ and $u \mapsto (1-\kappa_-(2u))\nu(u)\left(1-2u\right)^{n-r}$. This completes the proof. 
\end{proof}

\begin{proof}[\bf Proof of Corollary \ref{clow}]
Fix $n\ge n_0$. The bound in Corollary \ref{clow} is obtained by taking $\e=n^{-1}$ in Theorem \ref{low}. First, using the assumption on $\nu$, note that 
\begin{eqnarray}
\varphi^-_{n,r}(1/n)
&\ge& \frac{{n\choose r}}{n^{r}} (1-p_{\star})^{n}\frac{\ell(n)}{n^{1-\alpha}}
\nonumber\\
&\ge& \frac{1}{r!}\left(1-\frac{r}{n}\right)^{n}(1-p_{\star})^{n}\frac{\ell(n)}{n^{1-\alpha}}
\label{clow:eq1}\\
&\ge& \frac{e^{-r}}{2r!}(1-p_{\star})^{n}\frac{\ell(n)}{n^{1-\alpha}},
\label{clow:eq2}
\end{eqnarray}
where \eqref{clow:eq1} is due to the fact that
\begin{equation}
\label{clow:eq3}
\frac{{n\choose r}}{n^{r}}\ge\frac{1}{r!}\left(1-\frac{r}{n}\right)^{r}\ge\frac{1}{r!}\left(1-\frac{r}{n}\right)^{n},
\end{equation}
and \eqref{clow:eq2} follows from condition $(b)$. Next, denote
$$J_{n}=\int_{0}^{\frac{1}{n}}(1-\kappa_{-}(2u))\nu(u)u^{r}(1-2u)^{n-r}{\rm d}u.$$
Using condition $(a)$ and the assumption on $\nu$, it may be easily checked that 
\begin{eqnarray}
J_{n}&\ge&\frac{(1-\kappa^0_{-})\ell(n)}{2}\int_{0}^{\frac{1}{n}}u^{r-\alpha}(1-2u)^{n-r}{\rm d}u
\nonumber\\
&=&\frac{(1-\kappa^0_{-})\ell(n)}{2(2n)^{1+r-\alpha}}\int_{0}^{2}u^{r-\alpha}\left(1-\frac un\right)^{n-r}{\rm d}u
\nonumber\\
&\ge&\frac{(1-\kappa^0_{-})\ell(n)}{2(2n)^{1+r-\alpha}}\int_{0}^{2}u^{r-\alpha}\left(1-\frac un\right)^{n}{\rm d}u
\nonumber\\
&\ge&\frac{(1-\kappa^0_{-})\ell(n)}{4(2n)^{1+r-\alpha}}\gamma(1+r-\alpha,2)
\label{clow:eq4}
\end{eqnarray}
where \eqref{clow:eq4} follows from condition $(c)$. By rearranging the terms and using \eqref{clow:eq3} once again along with condition $(b)$, we finally deduce that 
\begin{equation}
\label{clow:eq5}
\theta^-_{n,r}(1/n)= 2^{-r}{n\choose r}J_{n}\ge \frac{e^{-r}}{2r!}(1-\kappa^0_{-})\frac{\gamma(1+r-\alpha,2)}{2^{1-\alpha}4^{1+r}}\frac{\ell(n)}{n^{1-\alpha}}.
\end{equation}
The result follows from \eqref{clow:eq2} and \eqref{clow:eq5}. 
\end{proof}

\begin{proof}[\bf Proof of Corollary \ref{cbmm3}] Let us denote $z=\zeta(1/\alpha)$ for brevity. First, it may be easily verified that the counting function $\nu$ of the distribution considered satisfies, for all $0<\e<1$, $\nu(\e)= \lfloor(z\e)^{-\alpha}\rfloor$, where $\lfloor \cdot\rfloor$ is the floor function. As a result, $\e^{\alpha}\nu(\e)\to z^{-\alpha}$ as $\e\to 0$, and thus $\nu\in {\rm rv}^{\alpha}_{0}$. From here, Proposition \ref{limkminus} implies that $\kappa^0_-=2^{-\alpha}$. Noticing that $p_{\star}=z^{-1}$ and that, for $\e\le (2^{1/\alpha}z)^{-1}$, we have $\nu(\e)\ge(z\e)^{-\alpha}-1\ge (z\e)^{-\alpha}/2$ (i.e. for $x\ge 2^{1/\alpha}z$ we can take $\ell(x)=z^{-\alpha}/2$), Corollary \ref{clow} implies that, provided $n\ge \max\{2,2^{1/\alpha}z\}$ and the conditions
$$\kappa_-\left(\frac{2}{n}\right)\le \frac{2^{\alpha}+1}{2^{\alpha+1}}\quad\mbox{and}\quad\int_{0}^{2}u^{-\alpha}\left(1-\frac{u}{n}\right)^{n}{\rm d}u\ge \frac{\gamma(1-\alpha,2)}{2}
$$
are satisfied (since $r=0$, condition $(b)$ in Corollary \ref{clow} automatically holds), we obtain
\begin{align}
\esp M_{n,0}&\ge \frac{1}{4}\left[(1-z^{-1})^n+\frac{(1-2^{-\alpha})\gamma(1-\alpha,2)}{2^{1-\alpha}4}\right]\frac{z^{-\alpha}}{n^{1-\alpha}}
\nonumber\\
&= \frac{1}{4}\left[(1-z^{-1})^n+\frac{(2^{\alpha}-1)\gamma(1-\alpha,2)}{8}\right]\frac{z^{-\alpha}}{n^{1-\alpha}}
\nonumber\\
&\ge \frac{(2^{\alpha}-1)\gamma(1-\alpha,2)}{32}\frac{z^{-\alpha}}{n^{1-\alpha}},\nonumber
\end{align}
where the last inequality follows from the fact that $z>1$. Now note that $\nu(\e)= \lfloor(z\e)^{-\alpha}\rfloor\le (z\e)^{-\alpha}$. Thus applying Corollary \ref{ctg} with $\ell$ constant and equal to $z^{-\alpha}$ gives an upper bound on $\esp M_{n,0}$. Combining the upper and lower bounds with the concentration bound in \eqref{MO03} yields the desired result.
\end{proof}

\begin{proof}[\bf Proof of Theorem \ref{bkgen}] 
Fix $x\in E$, $t>0$ and $0<\varrho\le\delta/2$. Then, observe that, 
\begin{align}
 \esp M^{(\delta)}_{n,0} &=\int_E \prob\left(\xi^{(\delta)}_{n}(u)=0\right)P({\rm d}u)
\nonumber\\
&=\int_E (1-P(B_{u,\delta}))^nP({\rm d}u)
\label{bkgen:e1}\\
&\le\tau_x(t)+\int_{B_{x,t}} (1-P(B_{u,\delta}))^nP({\rm d}u).
\label{bkgen:e2}
\end{align}
Here, \eqref{bkgen:e1} follows from the fact that, for all $u\in E$, $\xi^{(\delta)}_{n}(u)$ has a Binomial distribution with parameters $n$ and $P(B_{u,\delta})$, and \eqref{bkgen:e2} follows from the fact that $(1-P(B_{u,\delta}))^n\le1$. Next, let $N=N(B_{x,t},\varrho)$ and let $B_1,\dots,B_N\subset E$ be balls with radius $\varrho$ satisfying $B_{x,t}\subset B_1\cup\cdots\cup B_N$. Then, we obtain
\begin{align}
\int_{B_{x,t}} (1-P(B_{u,\delta}))^nP({\rm d}u)&\le \sum_{i=1}^N\int_{B_i} (1-P(B_{u,\delta}))^nP({\rm d}u)
\nonumber\\
&\le \sum_{i=1}^NP(B_i)(1-P(B_i))^n,
\label{bkgen:e3}
\end{align}
where \eqref{bkgen:e3} follows from the fact that, since $\varrho\le\delta/2$, if $u\in B_i$ then necessarily $B_i\subset B_{u,\delta}$. Now exactly as in \eqref{tg:eq5b}, one may deduce that
\begin{align}
\sum_{i=1}^NP(B_i)(1-P(B_i))^n&\le N\sup_{0\le p\le 1}p(1-p)^n
\nonumber\\
&\le\frac{N}{ne}.
\label{bkgen:e4}
\end{align}
The result follows by combining \eqref{bkgen:e2}, \eqref{bkgen:e3}, \eqref{bkgen:e4} and taking the infimum over $x\in E$, $t>0$ and $\varrho\le\delta/2$. 
\end{proof}

\begin{proof}[\bf Proof of Theorem \ref{ththething}]
First, note that 
\begin{equation}
\label{tt:e1}
\esp M^{(\delta)}_{r}(t)=\int_E \prob\left(\xi^{(\delta)}_{n_t}(x)=r\right)P({\rm d}x).
\end{equation}
For all $x\in E$, the variable $\xi^{(\delta)}_{n_t}(x)$ follows a Poisson distribution with parameter $\Lambda_tP(B_{x,\delta})$. Combining this with \eqref{measurenudelta} gives
\begin{eqnarray}
\esp M^{(\delta)}_{r}(t)&=&\frac{\Lambda^r_t}{r!}\int_E P(B_{x,\delta})^re^{-\Lambda_tP(B_{x,\delta})}
P({\rm d}x)
\nonumber\\
&=&\frac{\Lambda^r_t}{r!}\int_{0}^{1}u^{1+r}e^{-\Lambda_tu}\bs\nu_{\delta}({\rm d}u)
\nonumber\\
&=&\frac{\Lambda^r_t}{r!}\mathfrak L^{(\delta)}_{1+r}(\Lambda_t),
\label{tt:e2}
\end{eqnarray}
where $\mathfrak L^{(\delta)}_{1+r}(\cdot)$ stands for the Laplace transform of the measure $\bs\nu^{1+r}_{\delta}({\rm d}u)=u^{1+r}\bs\nu_{\delta}({\rm d}u)$. Now according to equality \eqref{ipp1} of Proposition \ref{ipp} from Appendix \ref{nuprop}, we know that for all $0<\e<1$, 
\begin{equation}
\label{tt:e4}
\bs\nu^{1+r}_{\delta}([0,\e])=-\e^{1+r}\nu_{\delta}(\e)+(1+r)\int_{0}^{\e}u^r\nu_{\delta}(u){\rm d}u.
\end{equation}
The assumption on the function $\nu_{\delta}$ implies that 
\begin{equation}
\label{tt:e5}
 \e^{1+r}\nu_{\delta}(\e)=\e^{1+r-\alpha}\ell(1/\e)
\end{equation}
and
\begin{equation}
\label{tt:e6}
\int_{0}^{\e}u^r\nu_{\delta}(u){\rm d}u= \int_{0}^{\e}u^{r-\alpha}\ell(1/u){\rm d}u  \sim\frac{\e^{1+r-\alpha}\ell(1/\e)}{1+r-\alpha},
\end{equation}
as $\e\to0$, where the equivalent follows from Karamata's Theorem \citep{K33}. Combining \eqref{tt:e4}, \eqref{tt:e5} and \eqref{tt:e6} leads to 
\begin{equation}
\bs\nu^{1+r}_{\delta}([0,\e])\sim \frac{\alpha}{1+r-\alpha}\e^{1+r-\alpha}\ell(1/\e), 
\end{equation}
as $\e\to0$. Finally, applying the Tauberian Theorem \citep[see, e.g., Theorem 2, Section 5, Chapter 13 in][]{F71} and using the fact that $\Gamma(2+r-\alpha)=(1+r-\alpha)\Gamma(1+r-\alpha)$, we deduce that 
$$
\mathfrak L^{(\delta)}_{1+r}(t)\sim \alpha\Gamma(1+r-\alpha)t^{-(1+r-\alpha)}\ell(t), 
$$
as $t\to+\infty$. From here, the result follows from the fact that $\Lambda_t\to+\infty$ as $t\to+\infty$ and identity \eqref{tt:e2}. 
\end{proof}
%-----------------------------------------
%-----------------------------------------

\appendix
\section{Basic properties of the counting function}
\label{nuprop}
The counting function $\nu$, defined in \eqref{nu}, is non-increasing by definition.  As $\e$ tends to $0$, $\nu(\e)$ increases towards $\vert \mathcal S\vert$, the cardinality of the support of $P$, which may, of course, be infinite. Since the masses $p_a$ sum to $1$, it may be easily observed that, for all $0<\e\le 1$, 
$$\nu(\e)\le\e^{-1}.$$
Next, we recall general integration by parts formulas from which we will deduce additional properties of $\nu$. 
\begin{pro}
\label{ipp}
Let $\mu$ be any positive measure on $[0,1]$. Then, for all $\tau\ge1$ and all $0<\e<1$, we have the two identities
\begin{eqnarray}
\int_{[0,\e]}x^{\tau}\mu({\rm d}x)&=& -\e^{\tau}\mu([\e,1])+{\tau}\int_{[0,\e]}x^{\tau-1}\mu([x,1]){\rm d}x,
\label{ipp1}\\
\int_{[\e,1]}x^{\tau}\mu({\rm d}x)&=& +\e^{\tau}\mu([\e,1])+{\tau}\int_{[\e,1]}x^{\tau-1}\mu([x,1]){\rm d}x.
\label{ipp2}
\end{eqnarray}
\end{pro}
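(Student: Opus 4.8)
The plan is to derive both identities from a single computation based on the elementary fact that $x^{\tau}=a^{\tau}+\tau\int_{a}^{x}u^{\tau-1}\,{\rm d}u$ for $0\le a\le x$ and $\tau\ge 1$, combined with Tonelli's theorem (legitimate here since $\mu$ is a positive measure and all integrands below are nonnegative). For a subinterval $[a,b]\subseteq[0,1]$ and $x\in[a,b]$, I would write $x^{\tau}=a^{\tau}+\tau\int_{[a,b]}u^{\tau-1}\mathbf 1\{u\le x\}\,{\rm d}u$; integrating against $\mu$ over $[a,b]$ and swapping the order of integration gives the master identity
$$\int_{[a,b]}x^{\tau}\mu({\rm d}x)=a^{\tau}\mu([a,b])+\tau\int_{[a,b]}u^{\tau-1}\mu([u,b])\,{\rm d}u.$$

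Taking $[a,b]=[\e,1]$ immediately yields \eqref{ipp2}, since then $\mu([u,b])=\mu([u,1])$ and $a^{\tau}\mu([a,b])=\e^{\tau}\mu([\e,1])$. For \eqref{ipp1} I would take $[a,b]=[0,\e]$: here $a^{\tau}=0$, so the master identity reads $\int_{[0,\e]}x^{\tau}\mu({\rm d}x)=\tau\int_{[0,\e]}u^{\tau-1}\mu([u,\e])\,{\rm d}u$. It then remains to turn $\mu([u,\e])$ into $\mu([u,1])$: using $\mu([u,\e])=\mu([u,1])-\mu((\e,1])$ together with $\tau\int_{[0,\e]}u^{\tau-1}\,{\rm d}u=\e^{\tau}$ produces the right-hand side of \eqref{ipp1}, with the boundary factor written as $\mu([\e,1])$.

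I do not expect any real obstacle here; the only point deserving a word of care is the treatment of the single point $\e$ — equivalently, how the mass $\mu(\{\e\})$ is allocated between the two pieces — i.e. whether the boundary term carries $\mu([\e,1])$ or $\mu((\e,1])$, the two differing by $\e^{\tau}\mu(\{\e\})$. This is immaterial whenever $\e$ is a continuity point of $\mu$ (hence for all but countably many $\e$, and in particular in every use of the proposition made in this paper). One should also keep in mind that the identities are only asserted when the left-hand side is finite, which guarantees $\mu([\e,1])<\infty$ and that the boundary term $a^{\tau}\mu([a,b])$ is well defined, so that no integrability issue arises in the Tonelli step.
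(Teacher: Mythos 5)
Your proof is correct and is precisely the ``standard application of Fubini's Theorem'' that the paper invokes and omits, so there is nothing to add on the method. Your endpoint remark is also accurate and worth recording: as literally written, \eqref{ipp1} is off by $\e^{\tau}\mu(\{\e\})$ when $\mu$ has an atom at $\e$ (with $\mu=\delta_{\e}$ the left-hand side is $\e^{\tau}$ while the right-hand side is $0$), so the boundary term there should read $\mu((\e,1])$; this is immaterial in the paper's applications, where one may restrict to continuity points of $\mu$.
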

The proof is a standard application of Fubini's Theorem and is thus omitted.  Note that, in the above, we did not assume finiteness of the integrals. When an integral on one side is infinite, the above should be interpreted to mean that the integral on the other side is infinite as well. We now reproduce an argument presented at the end of Section 3 in \citet{GHP07}. Letting $\bs\nu$ be the counting measure defined in \eqref{measurenu}, it may be easily seen that, for all $\tau\ge 1$ and all $0<\e<1$,  by \eqref{ipp2} we have
\begin{equation*}
\e^{\tau}\nu(\e)+\tau\int_{[\e,1]}x^{\tau-1}\nu(x){\rm d}x=\int_{[\e,1]}x^{\tau}\bs\nu({\rm d}x)=\sum_{a:p_a\ge\e}p^{\tau}_a\le1.
\end{equation*}
Taking the limit as $\e\to0$ and applying dominated convergence gives
\begin{equation}
\label{ipp21}
\lim_{\e\to 0}\left(\e^{\tau}\nu(\e)+\tau\int_{[\e,1]}x^{\tau-1}\nu(x){\rm d}x\right)=\lim_{\e\to 0}\sum_{a:p_a\ge\e}p^{\tau}_a=\sum_ap^{\tau}_a\le1,
\end{equation}
with equality holding if and only if $\tau=1$. Monotonicity guarantees the convergence of the integral in \eqref{ipp21}, which, together with the result of \eqref{ipp21}, implies that $\e^{\tau}\nu(\e)$ has a limit as $\e\to0$. Finally, if this limit was $c>0$, this would contradict the convergence of the integral in \eqref{ipp21} since we would have $x^{\tau-1}\nu(x)\sim c/x$ as $x\to 0$, which, in turn, would imply the integrability of $1/x$ at 0. Taking $\tau=1$ gives the following.

\begin{cor}
$$\int_{0}^{1}\nu(x){\rm d}x=1\quad\mbox{and}\quad\lim_{\e\to 0}\e\nu(\e)=0.$$
\end{cor}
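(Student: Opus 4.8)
The plan is to specialize the general identity already derived for arbitrary $\tau\ge1$ in the paragraph preceding the statement to the case $\tau=1$, so that the content of the corollary is essentially a bookkeeping consequence of Proposition \ref{ipp}. First I would apply identity \eqref{ipp2} with $\mu=\bs\nu$ the counting measure and $\tau=1$, which gives, for every $0<\e<1$,
$$\e\,\nu(\e)+\int_{[\e,1]}\nu(x)\,{\rm d}x=\int_{[\e,1]}x\,\bs\nu({\rm d}x)=\sum_{a:p_a\ge\e}p_a\le1.$$
Since both terms on the left are nonnegative, this already forces $\int_{[\e,1]}\nu(x)\,{\rm d}x\le1$ uniformly in $\e$; and since $\nu\ge0$, the quantity $\int_{[\e,1]}\nu(x)\,{\rm d}x$ increases as $\e\downarrow0$, so $\int_{0}^{1}\nu(x)\,{\rm d}x$ exists and is at most $1$.

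Next I would let $\e\to0$ in the displayed identity. By monotone convergence $\sum_{a:p_a\ge\e}p_a\to\sum_a p_a=1$, so rearranging yields
$$\lim_{\e\to0}\e\,\nu(\e)=1-\int_{0}^{1}\nu(x)\,{\rm d}x=:c\ge0,$$
the limit existing precisely because the other two quantities in the identity converge. It then remains to exclude $c>0$: if $c>0$ we would have $\nu(x)\sim c/x$ as $x\to0$, which would make $1/x$ integrable at $0$ (up to a constant) and contradict the convergence of $\int_0^1\nu(x)\,{\rm d}x$ established above. Hence $c=0$, which simultaneously gives $\lim_{\e\to0}\e\,\nu(\e)=0$ and, plugging back, $\int_0^1\nu(x)\,{\rm d}x=1$.

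I do not expect a genuine obstacle here, as the argument is a direct instantiation of the integration-by-parts formula of Proposition \ref{ipp}. The only points needing mild care are the justification that $\lim_{\e\to0}\e\,\nu(\e)$ genuinely exists (rather than merely being bounded), which follows from writing it as the difference of two convergent quantities, and the final contradiction step, where one uses that a nonnegative function asymptotically equivalent to $c/x$ near $0$ cannot be integrable on a neighbourhood of $0$.
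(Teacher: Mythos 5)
Your proof is correct and follows essentially the same route as the paper: the paper applies \eqref{ipp2} for general $\tau\ge1$ to obtain $\e^{\tau}\nu(\e)+\tau\int_{[\e,1]}x^{\tau-1}\nu(x)\,{\rm d}x=\sum_{a:p_a\ge\e}p_a^{\tau}\le1$, deduces convergence of the integral by monotonicity, concludes that $\e^{\tau}\nu(\e)$ has a limit, rules out a positive limit via the non-integrability of $1/x$ at $0$, and then sets $\tau=1$. Your argument is exactly this specialized to $\tau=1$ from the outset, with the same key steps, so there is nothing to add.
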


%-----------------------------------------
%-----------------------------------------

\section{On the accrual function}
\label{appendixod10}
In this appendix, we briefly discuss some properties of the accrual function $F$, which is defined for all $0\le\e\le1$ by
$$F(\e)=\int_{ [0,\e]}x\,\bs\nu({\rm d}x),$$
and of the bound \eqref{od10} provided by \citet{OD10}. First, note that \eqref{ipp1} establishes that the counting and accrual functions are related through the formula 
\begin{equation}
\label{linkfnu}
F(\e)=-\e\nu(\e)+\int_{[0,\e]}\nu(x){\rm d}x,
\end{equation}
for all $0<\e<1$. The next result shows that a straight-forward application of \eqref{od10} provides, at least in the pure power setting, rate optimal bounds up to a log term. Recall that, in the regularly varying setting, rate optimal bounds are ones that, asymptotically, behave as in \eqref{asympM}.

\begin{pro}
\label{appendixod1o}
Suppose that, for some constants $0<C_-<C_+<+\infty$ and some $\alpha\in(0,1)$, the counting function $\nu$ satisfies $C_-\e^{-\alpha}\le\nu(\e)\le C_+\e^{-\alpha}$, for all $0<\e<1$. Then, for all $n\ge 1$, the expected missing mass satisfies
$$\left(1-\frac{1}{n}\right)^n\frac{C^{-}_{\alpha}}{n^{1-\alpha}}\le\esp M_{n,0}\le \frac{1+C^{+}_{\alpha}(\log n)^{1-\alpha}}{n^{1-\alpha}},$$
where $C^{-}_{\alpha}=\max\{0,C_-/(1-\alpha)-C_+\}$ and $C^{+}_{\alpha}=(1-\alpha)^{1-\alpha}\{C_+/(1-\alpha)-C_-\}$.
\end{pro}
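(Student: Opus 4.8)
The plan is to combine the bounds of \citet{OD10}, recorded in \eqref{od10}, with the identity \eqref{linkfnu} relating the accrual function $F$ to the counting function $\nu$. All the work lies in first sandwiching $F$ between two multiples of $\e^{1-\alpha}$; after that, each of the two bounds follows from a single judicious choice of $\e$ in \eqref{od10}.

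First I would estimate $F$. Starting from $F(\e)=-\e\nu(\e)+\int_{[0,\e]}\nu(x)\,{\rm d}x$, which is \eqref{linkfnu}, and inserting the hypothesis $C_-\e^{-\alpha}\le\nu(\e)\le C_+\e^{-\alpha}$, a direct integration of the power function gives, for all $0<\e<1$,
\[
\left(\frac{C_-}{1-\alpha}-C_+\right)\e^{1-\alpha}\le F(\e)\le\left(\frac{C_+}{1-\alpha}-C_-\right)\e^{1-\alpha}.
\]
Writing $K=C_+/(1-\alpha)-C_-$, note that $K>0$ since $C_+>C_-$ and $1-\alpha<1$, and since $F\ge0$ by construction the lower estimate may be replaced by $F(\e)\ge C^{-}_{\alpha}\e^{1-\alpha}$ with $C^{-}_{\alpha}=\max\{0,C_-/(1-\alpha)-C_+\}$.

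For the upper bound on $\esp M_{n,0}$, I would use the right-hand inequality in \eqref{od10}, bound $(1-\e)^n\le e^{-n\e}$, and apply the upper estimate on $F$ to get $\esp M_{n,0}\le e^{-n\e}+K\e^{1-\alpha}$ for every $\e\in[0,1]$. The calibration $\e=(1-\alpha)(\log n)/n$ then does the job: this value lies in $[0,1]$ for every $n\ge1$ (indeed $\e\le(1-\alpha)/e<1$, with $\e=0$ when $n=1$), it makes $e^{-n\e}=n^{-(1-\alpha)}$, and it turns $K\e^{1-\alpha}$ into $(1-\alpha)^{1-\alpha}K(\log n)^{1-\alpha}n^{-(1-\alpha)}=C^{+}_{\alpha}(\log n)^{1-\alpha}n^{\alpha-1}$, which adds up to the claimed bound. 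For the lower bound, I would simply use the left-hand inequality in \eqref{od10} at the single value $\e=1/n$, combined with $F(1/n)\ge C^{-}_{\alpha}n^{-(1-\alpha)}$, which yields $\esp M_{n,0}\ge(1-1/n)^nC^{-}_{\alpha}n^{\alpha-1}$ at once.

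There is no real obstacle here; the argument is essentially a one-line substitution into \eqref{od10} once $F$ has been sandwiched. The only point deserving a little care is the calibration $\e=(1-\alpha)(\log n)/n$, which is precisely what converts the exponent-balancing infimum $\inf_{0\le\e\le1}\{e^{-n\e}+K\e^{1-\alpha}\}$ into the explicit constant $C^{+}_{\alpha}=(1-\alpha)^{1-\alpha}K$; one should also check that this $\e$ stays in $[0,1]$ and, if desired, record the trivial case $n=1$ separately.
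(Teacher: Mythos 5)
Your proposal is correct and follows exactly the route the paper sketches: sandwich the accrual function $F$ via \eqref{linkfnu} and the hypotheses on $\nu$, then plug $\e=(1-\alpha)(\log n)/n$ into the upper bound of \eqref{od10} (after $(1-\e)^n\le e^{-n\e}$) and $\e=1/n$ into the lower bound. The details you supply, including the check that the calibrated $\e$ stays in $[0,1]$, are all accurate.
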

The lower bound follows easily from \eqref{linkfnu}, the bounds on $\nu$, and the choice $\e=1/n$ in the lower bound of \eqref{od10}.  Similarly, the upper bound follows easily from \eqref{linkfnu}, the fact that $(1-\e)^n\le e^{-n\e}$, the bounds on $\nu$, and the choice $\e=(1-\alpha)(\log n)/n$ in the upper bound of \eqref{od10}. Supposing that the constant $C^-_{\alpha}>0$, it follows that, since $(1-n^{-1})^n\ge e^{-1}/2$ for large enough $n$, the lower bound in Proposition \ref{appendixod10} is rate optimal. 

\section{On lower bounds}
\label{AppC}
This Appendix gives an interesting and known result, versions of which can be found in e.g.\ Lemma 4.1 of \cite{ABS2000} or Lemma 1 of \cite{Zhang2016}.  

\begin{theo}
Suppose that $|\mathcal S|=\infty$. Then there exists a sequence $(k_n)$ of positive integers, with $k_n\to\infty$ as $n\to\infty$, such that, for any $r\ge0$, 
$$
\liminf_{n\to\infty} \frac{k^{r+1}_n}{{n\choose r}}\esp M_{n,r}\ge e^{-1}
$$
and
$$
\liminf_{n\to\infty} \frac{k^{r+1}_n}{{k_n\choose r}}\esp M_{k_n,r}\ge e^{-1}.
$$
\end{theo}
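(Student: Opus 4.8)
The plan is to exploit the exact formula \eqref{tg:eq3}, namely $\esp M_{n,r}=\binom{n}{r}\sum_{a}p_a^{r+1}(1-p_a)^{n-r}$, and extract a single well-chosen term from the sum. Since $|\mathcal S|=\infty$, we may enumerate a subsequence of distinct masses $p_{a_1}>p_{a_2}>\cdots$ (or simply note $\inf_a p_a=0$ with infinitely many nonzero $p_a$), and for each $n$ pick a letter $a$ whose mass $p_a$ is close to $1/n$ — more precisely, I would define $k_n$ to be an integer such that some letter has mass $p_a\in(c_1/k_n,c_2/k_n)$ for constants close to $1$, or, cleaner, choose $k_n=\lceil 1/p_{a(n)}\rceil$ where $a(n)$ is chosen so that $p_{a(n)}\to 0$ as $n\to\infty$ and $p_{a(n)}$ is, say, the largest mass not exceeding $1/n$. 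Because the support is infinite, such a choice forces $p_{a(n)}\to 0$, hence $k_n\to\infty$, and by construction $k_n p_{a(n)}\to 1$. The point of matching the scale $p_a\approx 1/n$ is that $(1-p_a)^n\to e^{-1}$ exactly at that scale, which is where the factor $e^{-1}$ in the statement comes from.

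The key steps, in order: (i) Fix $r\ge 0$ and keep only the term $a=a(n)$ in \eqref{tg:eq3}, giving $\esp M_{n,r}\ge \binom{n}{r}p_{a(n)}^{r+1}(1-p_{a(n)})^{n-r}$. (ii) Multiply by $k_n^{r+1}/\binom{n}{r}$ to get $\frac{k_n^{r+1}}{\binom{n}{r}}\esp M_{n,r}\ge (k_n p_{a(n)})^{r+1}(1-p_{a(n)})^{n-r}$. (iii) Take $\liminf$: the first factor tends to $1$ since $k_n p_{a(n)}\to 1$; for the second, since $p_{a(n)}\le 1/n$ we have $(1-p_{a(n)})^{n}\ge (1-1/n)^n\to e^{-1}$, and $(1-p_{a(n)})^{-r}\to 1$. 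This yields the first inequality. (iv) For the second inequality, repeat the identical argument but now evaluated at sample size $k_n$ in place of $n$: $\esp M_{k_n,r}\ge \binom{k_n}{r}p_{a(n)}^{r+1}(1-p_{a(n)})^{k_n-r}$, and $(1-p_{a(n)})^{k_n}=(1-p_{a(n)})^{1/p_{a(n)}\cdot k_n p_{a(n)}}\to e^{-1}$ since $k_n p_{a(n)}\to 1$ and $(1-x)^{1/x}\to e^{-1}$ as $x\to 0$. This gives the bound with $\binom{k_n}{r}$ in the denominator, as stated.

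The main obstacle is the construction of $k_n$ — one has to be careful that a single sequence $(k_n)$ works simultaneously for \emph{both} displayed inequalities and for \emph{all} $r\ge 0$. The cleanest route is to first select an auxiliary sequence of distinct support points $b_1,b_2,\dots$ with $p_{b_j}\to 0$ and $p_{b_j}>0$; then set $k_n$ to be a strictly increasing integer sequence chosen so that for each $n$ there is a support point $a(n)$ with $k_n p_{a(n)}\to 1$ (e.g. $k_n=\lfloor 1/p_{b_n}\rfloor$ and $a(n)=b_n$, after passing to a subsequence to ensure monotonicity and $k_n\to\infty$). Once $k_n\to\infty$ and $k_n p_{a(n)}\to 1$ are secured, the $r$-dependence is harmless because $(k_n p_{a(n)})^{r+1}\to 1$ and $(1-p_{a(n)})^{-r}\to 1$ for every fixed $r$, so no uniformity in $r$ is needed — the $\liminf$ is taken with $r$ fixed. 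The remaining estimates are the elementary limits $(1-1/n)^n\to e^{-1}$ and $(1-x)^{1/x}\to e^{-1}$, together with $\binom{m}{r}\ge (m-r)^r/r!$ type bounds if one wants quantitative control, but for the $\liminf$ statement only the limits are required.
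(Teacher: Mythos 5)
Your proposal is correct and follows essentially the same route as the paper: both isolate the single term $a=a_n$ in the exact formula $\esp M_{n,r}=\binom{n}{r}\sum_a p_a^{r+1}(1-p_a)^{n-r}$, take $k_n$ to be (essentially) $\lfloor 1/p_{a_n}\rfloor$ for a letter with $p_{a_n}\le 1/n$, and conclude via $(1-1/k)^k\to e^{-1}$. Just be sure to keep the constraint $p_{a(n)}\le 1/n$ (not merely $p_{a(n)}\to 0$, as your final ``e.g.''\ construction suggests), since step (iii) for the first displayed inequality genuinely needs it.
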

\begin{proof}[\bf Proof]
We only prove the first inequality as the proof of the second is similar. Let $a_n$ be an element of $\mathcal A$ with $n\le 1/p_{a_n}$ and let $k_n = \lfloor1/p_{a_n}\rfloor$, where $\lfloor\cdot\rfloor$ denotes the floor function. Note that $n\le k_n$,  $(1-p_{a_n})\le k_np_{a_n}\le1$, and $p_{a_n}\to0$ as $n\to\infty$.  For $n>r$ we therefore have 
\begin{align*}
\frac{k^{r+1}_n}{{n\choose r}}\esp M_{n,r}&\ge k_n^{r+1} p_{a_n}^{r+1} (1-p_{a_n})^{n-r} \\
&\ge (1-p_{a_n})^{n+1} \\
&\ge (1-p_{a_n}) \left(1-\frac{1}{k_n}\right)^{n} \\
&\ge (1-p_{a_n}) \left(1-\frac{1}{k_n}\right)^{k_n}.
\end{align*}
The result follows by observing that, for a fixed $r$, the term on the right hand-side of the last inequality tends to $e^{-1}$ as $n\to\infty$.
\end{proof}

%-----------------------------------------
%-----------------------------------------

\section*{Acknowledgements}

The authors wish to thank the Editor and the two anonymous referees whose comments led to improvements in the presentation of this paper. In particular, we thank them for showing us a nicer form for $c(r)$.

\bibliography{bibDGP16}

\begin{thebibliography}{39}
\providecommand{\natexlab}[1]{#1}
\providecommand{\url}[1]{\texttt{#1}}
\expandafter\ifx\csname urlstyle\endcsname\relax
  \providecommand{\doi}[1]{doi: #1}\else
  \providecommand{\doi}{doi: \begingroup \urlstyle{rm}\Url}\fi

\bibitem[Almudevar et~al.(2000)Almudevar, Bhattacharya, and Sastri]{ABS2000}
A.~Almudevar, R.~N. Bhattacharya, and C.~C.~A. Sastri.
\newblock Estimating the probability mass of unobserved support in random
  sampling.
\newblock \emph{Journal of Statistical Planning}, 91\penalty0 (1):\penalty0
  91--105, 2000.

\bibitem[Ben-Hamou et~al.(2017)Ben-Hamou, Boucheron, and Ohannessian]{BBO15}
A.~Ben-Hamou, S.~Boucheron, and M.~I. Ohannessian.
\newblock Concentration inequalities in the infinite urn scheme for occupancy
  counts and the missing mass, with applications.
\newblock \emph{Bernoulli}, 23\penalty0 (1):\penalty0 249--287, 2017.

\bibitem[Berend and Kontorovich(2012)]{BK12}
D.~Berend and A.~Kontorovich.
\newblock The missing mass problem.
\newblock \emph{Statistics and Probability Letters}, 82:\penalty0 1102--1110,
  2012.

\bibitem[Berend and Kontorovich(2013)]{BK13}
D.~Berend and A.~Kontorovich.
\newblock On the concentration of the missing mass.
\newblock \emph{Electronic Communications in Probability}, 18\penalty0
  (3):\penalty0 1--7, 2013.

\bibitem[Chao(1981)]{C81}
A.~Chao.
\newblock On estimating the probability of discovering a new species.
\newblock \emph{The Annals of Statistics}, 9:\penalty0 1339--1342, 1981.

\bibitem[Chao(1984)]{C84}
A.~Chao.
\newblock Nonparametric estimation of the number of classes in a population.
\newblock \emph{Scandinavian Journal of Statistics}, 11:\penalty0 265--270,
  1984.

\bibitem[Chao and Lee(1991)]{CL91}
A.~Chao and S.~Lee.
\newblock Estimating the number of classes via sample coverage.
\newblock \emph{Journal of the American Statistical Association}, 87\penalty0
  (417):\penalty0 210--217, 1991.

\bibitem[Chao et~al.(1988)Chao, Lee, and Chen]{C88}
A.~Chao, S.~Lee, and T.~Chen.
\newblock A generalized {G}ood's nonparametric coverage estimator.
\newblock \emph{Chinese Journal of Mathematics}, 16\penalty0 (3):\penalty0
  189--199, 1988.

\bibitem[Chen and Goodman(1999)]{CG99}
S.~F. Chen and J.~Goodman.
\newblock An empirical study of smoothing techniques for language modeling.
\newblock \emph{Computer Speech and Language}, 13:\penalty0 359--394, 1999.

\bibitem[Efron and Thisted(1976)]{ET76}
B.~Efron and R.~Thisted.
\newblock Estimating the number of unseen species: How many words did
  {S}hakespeare know?
\newblock \emph{Biometrika}, 63:\penalty0 435--447, 1976.

\bibitem[Esty(1983)]{E83}
W.~W. Esty.
\newblock A normal limit law for a nonparametric estimator of the coverage of a
  random sample.
\newblock \emph{The Annals of Statistics}, 11\penalty0 (3):\penalty0 905--912,
  1983.

\bibitem[Feller(1971)]{F71}
W.~Feller.
\newblock \emph{An introduction to probability theory and its applications},
  volume~2.
\newblock Wiley, 1971.

\bibitem[Gandolfi and Sastri(2004)]{GS04}
A.~Gandolfi and C.~C.~A. Sastri.
\newblock Nonparametric estimations about species not observed in a random
  sample.
\newblock \emph{Milan Journal of Mathematics}, 72\penalty0 (1):\penalty0
  81--105, 2004.

\bibitem[Gnedin et~al.(2007)Gnedin, Hansen, and Pitman]{GHP07}
A.~Gnedin, B.~Hansen, and J.~Pitman.
\newblock Notes on the occupancy problem with infinitely many boxes: general
  asymptotics and power laws.
\newblock \emph{Probability Surveys}, 4:\penalty0 146--171, 2007.

\bibitem[Good(1953)]{G53}
I.~J. Good.
\newblock The population frequencies of species and the estimation of
  population parameters.
\newblock \emph{Biometrika}, 40\penalty0 (3/4), 1953.

\bibitem[Good and Toulmin(1956)]{GT56}
I.~J. Good and G.~H. Toulmin.
\newblock The number of new species, and the increase in population coverage,
  when a sample is increased.
\newblock \emph{Biometrika}, 43:\penalty0 45--63, 1956.

\bibitem[Grabchak and Cosme(2015)]{GC15}
M.~Grabchak and V.~Cosme.
\newblock On the performance of turing's formula: a simulation study.
\newblock \emph{Communications in Statistics - Simulation and Computation}, (in
  press), 2015.

\bibitem[Harris(1959)]{H59}
B.~Harris.
\newblock Determining bounds on integrals with applications to cataloging
  problems.
\newblock \emph{The Annals of Mathematical Statistics}, 30:\penalty0 521--548,
  1959.

\bibitem[Harris(1968)]{H68}
B.~Harris.
\newblock Statistical inference in the classical occupancy problem, unbiased
  estimation of the number of classes.
\newblock \emph{Journal of the American Statistical Association}, 63:\penalty0
  837--847, 1968.

\bibitem[Holst(1981)]{H81}
L.~Holst.
\newblock Some asymptotic results for incomplete multinomial or poisson
  samples.
\newblock \emph{Scandinavian Journal of Statistics}, 8:\penalty0 243--246,
  1981.

\bibitem[Johnson and Kotz(1977)]{Johnson:Kotz:1977}
N.~L. Johnson and S.~Kotz.
\newblock \emph{Urn Models and Their Application}.
\newblock Wiley, 1977.

\bibitem[Karamata(1933)]{K33}
J.~Karamata.
\newblock Sur un mode de croissance r{\'e}guli{\`e}re.
\newblock \emph{Bulletin de la Soci{\'e}t{\'e} Math{\'e}matique de France},
  61:\penalty0 55--62, 1933.

\bibitem[Karlin(1967)]{K67}
S.~Karlin.
\newblock Central limit theorems for certain infinite urn schemes.
\newblock \emph{Journal of Mathematics and Mechanics}, 17:\penalty0 373--401,
  1967.

\bibitem[Khanloo and Haffari(2015)]{KH15}
B.~Y.~S. Khanloo and G.~Haffari.
\newblock Novel {B}ernstein-like concentration inequalities for the missing
  mass.
\newblock In \emph{Proceedings of the 31st conference on uncertainty in
  artificial intelligence}, 2015.

\bibitem[Mao and Lindsay(2002)]{ML02}
C.~X. Mao and B.~G. Lindsay.
\newblock A {P}oisson model for the coverage problem with a genomic
  application.
\newblock \emph{Biometrika}, 89:\penalty0 669--681, 2002.

\bibitem[McAllester and Ortiz(2003)]{MO03}
D.~A. McAllester and L.~E. Ortiz.
\newblock Concentration inequalities for the missing mass and for histogram
  rule error.
\newblock \emph{Journal of Machine Learning Research}, 4:\penalty0 895--911,
  2003.

\bibitem[McAllester and Schapire(2000)]{MS00}
D.~A. McAllester and R.~Schapire.
\newblock On the convergence rate of {G}ood-{T}uring estimators.
\newblock In \emph{Proceedings of the 13th Annual Conference on Computational
  Learning Theory}, pages 1--6, 2000.

\bibitem[Ohannessian and Dahleh(2010)]{OD10}
M.~I. Ohannessian and M.~A. Dahleh.
\newblock Distribution-dependent performance of the {G}ood-{T}uring estimator
  for the missing mass.
\newblock In \emph{Proceedings of the 19th international symposium on
  mathematical theory of networks and systems}, pages 679--682, 2010.

\bibitem[Ohannessian and Dahleh(2012)]{OD12}
M.~I. Ohannessian and M.~A. Dahleh.
\newblock Rare probability estimation under regularly varying heavy tails.
\newblock In \emph{Proceedings of the 25th Annual Conference on Learning
  Theory}, volume~23, pages 21.1--21.24, 2012.

\bibitem[Orlitsky et~al.(2004)Orlitsky, Santhanam, and Zhang]{OSZ04}
A.~Orlitsky, N.~P. Santhanam, and J.~Zhang.
\newblock Universal compression of memoryless sources over unknown alphabets.
\newblock \emph{IEEE Transactions on Information Theory}, 50\penalty0
  (7):\penalty0 1469--1481, 2004.

\bibitem[Robbins(1955)]{Ro55}
H.~Robbins.
\newblock A remark on stirling's formula.
\newblock \emph{The American Mathematical Monthly}, 62\penalty0 (1):\penalty0
  26--29, 1955.

\bibitem[Robbins(1968)]{R68}
H.~E. Robbins.
\newblock Estimating the total probability of the unobserved outcomes of an
  experiment.
\newblock \emph{The Annals of Mathematical Statistics}, 39\penalty0
  (1):\penalty0 256--257, 1968.

\bibitem[Starr(1979)]{S79}
N.~Starr.
\newblock Linear estimation of the probability of discovering a new species.
\newblock \emph{The Annals of Statistics}, 7:\penalty0 644--652, 1979.

\bibitem[Thisted and Efron(1987)]{TE87}
R.~Thisted and B.~Efron.
\newblock Did {S}hakespeare write a newly discovered poem.
\newblock \emph{Biometrika}, 74:\penalty0 445--455, 1987.

\bibitem[Zhang(2005)]{Z05}
C.~H. Zhang.
\newblock Estimation of sums of random variables: Examples and information
  bounds.
\newblock \emph{The Annals of Statistics}, 33:\penalty0 2022--2041, 2005.

\bibitem[Zhang and Zhang(2009)]{ZZ09}
C.~H. Zhang and Z.~Zhang.
\newblock Asymptotic normality of a nonparametric estimator of sample coverage.
\newblock \emph{The Annals of Statistics}, 37\penalty0 (5A):\penalty0
  2582--2595, 2009.

\bibitem[Zhang(2016)]{Zhang2016}
Z.~Zhang.
\newblock Domains of attraction on countable alphabets.
\newblock \emph{Bernoulli}, (in press), 2016.

\bibitem[Zhang and Huang(2007)]{ZH07}
Z.~Zhang and H.~Huang.
\newblock Turing's formula revisited.
\newblock \emph{Journal of Quantitative Linguistics}, 14\penalty0
  (2-3):\penalty0 222--241, 2007.

\bibitem[Zhang and Huang(2008)]{ZH08}
Z.~Zhang and H.~Huang.
\newblock A sufficient normality condition for {T}uring's formula.
\newblock \emph{Journal of Nonparametric Statistics}, 20\penalty0 (5):\penalty0
  431--446, 2008.

\end{thebibliography}
\end{document}